\newcommand{\C}{\mathbb{C}}
\newcommand{\N}{\mathbb{N}}
\newcommand{\R}{\mathbb{R}}
\newcommand{\T}{\mathbb{T}}
\newcommand{\Z}{\mathbb{Z}}
\newcommand{\Bcal}{\mathcal{B}}
\newcommand{\Hcal}{\mathcal{H}}
\newcommand{\Ical}{\mathcal{I}}
\newcommand{\Lcal}{\mathcal{L}}
\newcommand{\Mcal}{\mathcal{M}}
\newcommand{\Scal}{\mathcal{S}}
\newcommand{\gscr}{\mathscr{g}}
\newcommand{\hscr}{\mathscr{h}}
\newcommand{\lscr}{\mathscr{l}}
\newcommand{\pscr}{\mathscr{p}}
\newcommand{\Escr}{\mathscr{E}}
\newcommand{\Sscr}{\mathscr{S}}
\newcommand{\Tscr}{\mathscr{T}}
\newcommand{\Uscr}{\mathscr{U}}
\newcommand{\Xscr}{\mathscr{X}}
\newcommand{\Afrak}{\mathfrak{A}}
\newcommand{\Gfrak}{\mathfrak{G}}
\newcommand{\Sfrak}{\mathfrak{S}}
\DeclareMathOperator{\Vect}{\mathrm{Vect}}
\DeclareMathOperator{\Hom}{\mathrm{Hom}}
\DeclareMathOperator{\coker}{\mathrm{coker}}
\DeclareMathOperator{\GL}{\mathrm{GL}}
\DeclareMathOperator{\Cone}{\mathrm{Cone}}
\DeclareMathOperator{\Spec}{\mathrm{Spec}}
\numberwithin{equation}{section}
\begin{document}
\title{Non-simplicial quantum toric varieties}
\author{Antoine Boivin}

\address{Antoine Boivin,Univ Angers, CNRS, LAREMA, SFR MATHSTIC, F-49000 Angers, France}%
\email{\href{mailto:antoine.boivin@univ-angers.fr}{antoine.boivin@univ-angers.fr}}
\subjclass{53D20 (Primary) 81S10, 53D37 (Secondary)}
\keywords{Toric geometry, irrational fans, non-simplicial fans, stacks}

%\date\today

% Enter details of editor communicating this article

\newtheorem{Thm}{Theorem}[subsubsection]
\newtheorem{Prop}[Thm]{Proposition}
\newtheorem{Lemma}[Thm]{Lemma}
\newtheorem{Cor}[Thm]{Corollary}
\newtheorem{Const}[Thm]{Construction}
\newtheorem{War}[Thm]{Warning}
\theoremstyle{definition}

\newtheorem{Ex}[Thm]{Example}

\newtheorem{Def}[Thm]{Definition}
\newtheorem{Not}[Thm]{Notation}
\theoremstyle{remark}
\newtheorem{Rem}[Thm]{Remark}

\begin{abstract}
This paper defines quantum toric varieties associated with an arbitrary fan in a finitely generated subgroup of some $\R^d$.  This is a generalization of the results of the article \citep{VQS} of Katzarkov, Lupercio, Meersseman and Verjovsky.
\end{abstract}

\maketitle

\tableofcontents

\renewcommand{\theThm}{\arabic{section}.\arabic{Thm}}
\section{Introduction}

A toric variety is a complex algebraic variety with an action of an algebraic torus $(\C^*)^n$ with a Zariski open orbit isomorphic to this torus. Such a variety can be described by a fan of rational strongly convex polyhedral cones on a lattice $\Gamma$ (see, for example, \citep{cox} or \citep{fulton}). 

More precisely, there is an equivalence between the category of toric varieties and that of fans. This central result gives us a dictionary between toric varieties' geometric properties and fans' combinatorial properties making toric varieties one of the most studied classes of complex algebraic varieties.

However, in the classical theory, this fan has to be rational. Therefore the toric varieties are rigid, i.e., we cannot deform them. Indeed, if we deform a lattice, it can become non-discrete (for instance, the group $\Z + \alpha\Z$ is discrete if $\alpha$ is rational but is dense in $\R$ if $\alpha$ is irrational). Hence, the natural question to ask is: What can we do if we do not have a fan living on a lattice but rather on a finitely generated (possibly irrational) subgroup of some $\R^d$ (also called quasi-lattice in reference of its use in quasi-crystals theory, see \citep{senechal1996quasicrystals})? 
We can find in the literature several constructions of such generalized toric varieties: as leaf space (see \cite{LV} for the classical (projective) construction, \cite{toricfoliation}, \cite{Battaglia_2015} and \cite{katzarkov:hal-01672716} for the non-rational one), as (possibly stratified) symplectic quasifold global quotient (see \cite{quasifold} for the simplicial polytopal case and \cite{battaglia2008geometric} for the general polytopal case) or as stacks (see \cite{hoffman2020toric} for the polytopal case as compact symplectic stacks and \cite{VQS} for the simplicial case). In this paper, we will use the framework of the last cited article.

The authors of \citep{VQS} gave a construction of "quantum toric varieties" (which are stacks) described by a simplicial fan (i.e., the 1-cones contained on each cone of the fan are $\R$-linearly independent) on a finitely generated subgroup $\Gamma$ of some $\R^d$. When the fan is rational, one recovers the classical toric variety, but irrational simplicial fans is also covered. 
This construction is functorial and defines an equivalence between the quantum toric varieties category and that of quantum simplicial fans (see theorems 5.18 and 6.24 of \citep{VQS}). Hence, we have a practical notion of morphisms between these quantum toric varieties. Moreover, this construction behaves well with non-effective quotients which can appear when we fix the number of generators of the group $\Gamma$ to compute moduli spaces of quantum toric varieties.

Now, the simplicial fans form only a tiny part of all the fans: in the classical theory, they correspond to the toric varieties which are orbifold (i.e. with cyclic singularities). Hence, the restriction to simplicial fans is a strong one.

This paper extends the construction of \citep{VQS} to the general case, i.e., we define the quantum toric variety associated with an arbitrary fan. 

The non-simplicial case brings new problems. Indeed, in the simplicial case, the family of 1-cones of a cone is $\R$-linearly free and can be completed to a basis of $\R^d$. Hence, up to isomorphism, a simplicial cone is a standard cone $\Cone(e_1,\ldots,e_k) \subset \R^d$ (where $\{e_1,\ldots,e_d\}$ is the canonical basis of $\R^d$). Moreover, we can easily find the faces of these cones: they are the cones generated by a subfamily of $\{e_1,\ldots,e_k\}$. This basic fact is repeatedly used in construction of \citep{VQS}: this is no longer possible for the non-simplicial case. Indeed, as the 1-cones of the non-simplicial cones are linearly dependant, we do not have a notion of standard cone and the cones generated by a subfamily of the 1-cones are not necessarily a face of the cone since the cone can have an arbitrarily large number of 1-cones. The main consequences of this irregularity of the number of generators are the absence of simple local models for quantum toric varieties, the lack of morphisms for non-calibrated quantum toric varieties and the absence of realization of the non-simplicial quantum toric varieties as a global quotient stack defined by the action induced by Gale transform of the generators (in the same way as \cite{Coxquotient} for the classical toric varieties and as the toric Deligne-Mumford stack case of \citep{jiang2008} or the quantum GIT of the simplicial case of \citep{VQS})

One of the key technical points is to replace the calibration $h$ of the group $\Gamma$ used in \citep{VQS} by a calibration $\varphi$ of a group isomorphic to $\Gamma$ but which is a subgroup of a higher dimensional $\R^p$ for each maximal cone of the fan ($p$ is the number of 1-cones of the maximal cone).

Section \ref{Quantum_tori} gives a suitable definition of quantum tori (called "presented quantum tori") to deal with the non-simplicial case. More precisely, these quantum tori encode the last paragraph's calibration change.  

In section \ref{NSQTV}, we define the affine quantum toric variety (which will be calibrated since we have to keep track of the relations between the 1-cones) associated with a (non-simplicial) cone and more generally, the quantum toric variety with the descent data of the affine pieces associated to a quantum fan.
In particular, this construction works for simplicial fans and coincide with that of \citep{VQS}. At the end of this section, we prove the main theorem of this paper (extending the similar theorem of \citep{VQS}): 
%\vspace{-0.05cm}
\begin{Thm}
The category of calibrated quantum fans and the category of (calibrated) quantum toric varieties are equivalent.
\end{Thm}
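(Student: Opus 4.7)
The plan is to construct two functors---a functor $F$ sending a calibrated quantum fan to its associated calibrated quantum toric variety, and a functor $G$ going back---and to exhibit natural isomorphisms $F \circ G \cong \mathrm{Id}$ and $G \circ F \cong \mathrm{Id}$. The functor $F$ on objects is essentially already in hand from Section~\ref{NSQTV}: to a calibrated quantum fan $\Sigma$ one associates the quantum stack obtained by gluing the affine pieces $X_\sigma$ along the descent data attached to face inclusions. What remains for $F$ is functoriality on morphisms: a morphism of calibrated quantum fans is a compatible group homomorphism sending each cone of the source into a cone of the target, and such a map induces morphisms between the corresponding affine pieces respecting the gluing, hence a morphism of calibrated quantum toric varieties. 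Identities and composition are preserved at the cone level.

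The substantial direction is the construction of $G$. Given a calibrated quantum toric variety $\Xcal$, I would first extract the ambient quantum torus from the open dense orbit; this recovers both the group $\Gamma$ and its calibration, as in Section~\ref{Quantum_tori}. For each affine chart I would then read off a cone $\sigma$ as the dual of the semigroup of equivariant morphisms to $\C$ of the corresponding affine piece, and---crucially in the non-simplicial setting---record the higher-dimensional calibration $\varphi \colon \Gamma \to \R^p$ encoded in the structure of that piece, where $p$ is the number of $1$-cones of $\sigma$. The cocycle relations of the descent data translate into face relations between these cones, producing a calibrated quantum fan. The natural isomorphisms are then verified affine-piece by affine-piece: for $G \circ F$, the dual of the semigroup of a standard affine quantum piece reproduces the original cone together with its calibration; for $F \circ G$, the reconstructed affine pieces glue back to $\Xcal$ because the descent data on both sides are determined by the same gluing morphisms on the quantum torus.

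I expect the principal obstacle to be the bookkeeping of the per-cone calibrations $\varphi$. In the simplicial case of \cite{VQS}, the $1$-cones of any cone form part of a basis of $\R^d$, so the ambient calibration $h$ of $\Gamma$ can be transported directly onto each cone; in the non-simplicial case the number of $1$-cones may exceed $d$, so each maximal cone carries its own calibration into $\R^p$. Making $F$ and $G$ quasi-inverse amounts to showing that these per-cone calibrations are faithfully recoverable from the affine pieces and glue consistently across faces---in other words, that the extra data $\varphi$ is both necessary and sufficient to rigidify the gluing. The rest of the argument is a calibrated adaptation of the classical toric dictionary and of the simplicial proof in \cite{VQS}, and I would carry it out by reducing, via the face relations of the fan, to the affine situation where the cone-semigroup duality directly yields the result.
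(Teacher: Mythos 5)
Your proposal takes a genuinely different route from the paper, and the difference matters. You set up the equivalence in the classical toric style: construct a quasi-inverse $G$ by reading off a cone from each affine piece as the ``dual of the semigroup of equivariant morphisms to $\C$'', then reassemble. The paper does not do this, and for good reason: in Definition~\ref{QTV_cal} a (calibrated) quantum toric variety is \emph{defined} as the descent data of a family of affine quotient stacks indexed by the maximal cones of a given calibrated quantum fan, together with the gluing isomorphisms $\gscr_{II'}$. Essential surjectivity of $(\Delta,h^{cal},\Ical)\mapsto \Xscr^{cal}_{\Delta,h^{cal},\Ical}$ is therefore automatic, and the whole content of the theorem is full faithfulness on morphisms. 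The paper gets this in two steps: the affine case is Theorem~\ref{equiv_cat_aff} (a toric morphism $\Uscr^{cal}_\sigma\to\Uscr^{cal}_{\sigma'}$ determines and is determined by a pair of linear maps $(L,H)$ forming a calibrated quantum fan morphism), and the gluing case in Section~\ref{Glueing} shows that the per-cone data $(L_\sigma,H_\sigma)$ extracted from a compatible family of toric morphisms agree on overlaps, hence patch to a single fan morphism, and conversely.

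The gap in your version is concentrated exactly where you flag the ``principal obstacle''. You would need to justify that ``the dual of the semigroup of equivariant morphisms to $\C$'' of a stack $[\C^I\times\T^J/\Z^{N-d}\times E(\ker(h_{\sigma\C}))]$ recovers the cone $\sigma$ in $\Gamma$ and, separately, that the calibration $h^{cal}\colon\Z^N\to\Gamma$ and the set of virtual generators $\Ical$ are ``encoded in the structure of that piece''. In the irrational case $\Gamma$ is dense and the semigroup duality of the rational theory does not transfer directly; more importantly, the calibration data $(h,\Ical)$ is carried by the \emph{ineffectivity} of the $\Z^{N-d}$-action (the kernel $\Xi=\ker(h^{cal})$ and the choice of virtual generators), not by equivariant functions to $\C$. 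Your proposal gestures at recording $\varphi$ but gives no mechanism for it. The paper's route sidesteps this entirely: it never reconstructs a fan from an abstract stack, but instead works with the presentation built into the definition and extracts $(L,H)$ directly from torus morphisms between the presented calibrated quantum tori of the affine pieces. If you want to pursue your reconstruction route, the argument would need a quantum analogue of cone--semigroup duality that is sensitive to the calibration and the virtual-generator set, which is a substantially harder statement than anything the paper proves.
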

Finally, we realize these quantum toric varieties as a global stack quotient 
in section \ref{GIT} and prove that they can not be done in the same way as it was done in the simplicial case.

%Indeed, the gluing construction of the non-simplicial quantum toric varieties can not be described as a global quotient as the quotient of a quasi-affine toric variety by the action of a torus (resp. $\C^{N-d}$) due to the change of number of generators of the cone.

After that, we realize quantum toric varieties as a gerbe over a "non-calibrated quantum toric varieties" associated to it (i.e. where we replace the action of $\Z ^{N-d}$ through the epimorphism $\varphi$ by the action of $\varphi(\Z^{N-d})$ and hence removing the ineffective part of the action) in section \ref{forget_cal}. However, in the non-simplicial case, we cannot define non-calibrated quantum toric varieties and their morphisms alone as in the simplicial case. Indeed, the presence of relations between the generators of the non-simplicial cones prevents the construction of morphisms from the morphisms of non-calibrated quantum fans.

%There are earlier works generalizing the construction of non-simplicial toric varieties. For instance, Battaglia (in \citep{battaglia2008geometric}) associates to a non-simple non-rational polytope a geometric space defined as a global GIT quotient and which can be stratified by quasifolds.

\renewcommand{\theThm}{\arabic{section}.\arabic{subsection}.\arabic{Thm}}
\section{Notations and conventions}
\subsection{Stacks}
We will take the same conventions on stacks as \citep{VQS} (see \citep{intro_stack}, \citep{Stackeveryb}, \citep{stacks-project} for details on stacks) :

Let $\Afrak$ be the category of affine toric varieties with toric morphisms. We endow this category with a structure of site by the consideration of the coverings $\left\{ U_i \hookrightarrow T\right\}_{i \in \{1,\ldots,n\}} $, where the $U_i$ are toric open subsets of $T$ and $T=\bigcup_{i=1}^n U_i$. 

Let $\Gfrak$ be the category of complex analytic spaces carrying an action of an abelian complex Lie group $G$ with an open orbit isomorphic to $G$.
\begin{Def}

Let $H$ be an abelian Lie group and $X$ be an object of $\Gfrak$ with an action of $H$ commute with the action of $G$. \\
The stack $[X/H]$ is the stack over $\Afrak$ whose objects over $T$ are $H$-principal bundle $\widetilde{T} \to T$ (in $\Gfrak$) with an $H$-equivariant morphism $\widetilde{T} \to X$ and morphisms over $S \to T$ are a bundle morphism $\widetilde{S} \to \widetilde{T}$ compatible with the equivariant maps.  
\end{Def}

All the stacks in this paper will be of this form or given by the descent data of stacks of this form.

\subsection{Fans}
We will recall some definitions on (calibrated) quantum fans: 

\begin{Def}
Let $\Gamma$ be a finitely generated subgroup of $\R^d$ such that $\Vect_\R(\Gamma)=\R^d$. A calibration of $\Gamma$ is given by:
\begin{itemize}
\item A group epimorphism $h \colon \Z^N \to \Gamma$
\item A subset $\Ical \subset \{1,\ldots,N\}$ such that $\Vect_\C(h(e_j), j \notin \Ical)=\C^d$ (this is the set of virtual generators)

\end{itemize}
This is a standard calibration if $\Z^d \subset \Gamma$, $h(e_i)=e_i$ for $i=1,\ldots,d$ and $\Ical$ is of the form $\{n-|\Ical|+1,\ldots,n\}$
\end{Def}

\begin{Def} \label{calib_q_fan_def}
A calibrated quantum fan $(\Delta,h,\Ical)$ in $\Gamma$ is the data of 
\begin{itemize}
\item a collection $\Delta$ of strongly convex polyhedral cones generated by elements of $\Gamma$ such that every intersection of cones of $\Delta$ is a cone of $\Delta$, every face of a cone of $\Delta$ is a cone and $\{0\}$ is a cone of $\Delta$.
\item a standard calibration $h$ with $\Ical$ its set of virtual generators
\item A set of generators $A$ i.e. a subset of $\{1,\ldots,N\} \setminus \Ical$ such that the 1-cone generated by the $h(e_i)$ for $i \in A$ are exactly the 1-cones of $\Delta$ 
\end{itemize}  
\end{Def}

\begin{Rem}
In the rational case, this definition corresponds to an extended stacky fan (see \citep{jiang2008})
\end{Rem}
 
Let $h: \Z^N \to \Gamma$ a calibration (where $\Ical$ is its set of virtual generators) and  $(\Delta,h,\Ical)$ a calibrated quantum fan in $\Gamma$. Note $\Delta_h$ the fan in $\Z^N$ such that 
$\sigma \in \Delta_h$ if, and only if, there exists $\Cone(h(e_i), i \in I) \in \Delta$ such that $\sigma \preceq \Cone(e_i, i \in I) $

\begin{Def}
A morphism of calibrated quantum fans between $(\Delta,h,\Ical)$ in $\Gamma$ and $(\Delta',h',\Ical')$ in $\Gamma'$ is a pair of linear morphisms $(L: \R^d \to \R^{d'},H: \R^N \to \R^{N'})$ where 
\begin{itemize}
\item 
The diagram 
\begin{equation} \label{qf_diag}
  \begin{tikzcd}
	{\R^n} & {\R^d} \\
	{\R^{n'}} & {\R^{d'}}
	\arrow["{h_\R}", from=1-1, to=1-2]
	\arrow["L", from=1-2, to=2-2]
	\arrow["H"', from=1-1, to=2-1]
	\arrow["{h'_\R}"', from=2-1, to=2-2]
\end{tikzcd}
\end{equation}

commutes (where $h_\R$ (resp. $h'_\R$) is the $\R$-linear map associated to $h$ (resp. to $h'$))
\item $L(\Gamma) \subset \Gamma'$
\item If $\sigma \in \Delta_h$ then it exists $\sigma'\in \Delta'_{h'} $ such that $H(\sigma) \subset \sigma'$ (thanks to the first point, the same statement is true for $L$)
\item If $H(e_i) \in \Cone(e_j,j \in I') \in \Delta_h$ then $H(e_i) \in\bigoplus_{j \in I'} \N e_j$  (same statement is true for $L$ and the vectors $h(e_i)$, thanks to the first point)
\item  For all $i \notin \Ical$, $H(e_i) \in \bigoplus_{j \notin \Ical'} \Z e_j$
\item There exists a map $s: \Ical \to \Ical'$ such that for all $i \in\Ical, H(e_i)=e_{s(i)}$ 
\end{itemize}

\end{Def}

\begin{Rem} ~ 

\begin{itemize}
    \item This is a global definition of quantum fan morphisms by opposition to the definition with a family of compatible morphisms between each cone.
    \item We can define non-calibrated counterpart of these definitions by replacing the data of the calibration by the data of the 1-cones $h(e_i)$, $i \in \{1,\ldots,N\}$.
\end{itemize}

\end{Rem}
\renewcommand{\theThm}{\arabic{section}.\arabic{subsection}.\arabic{Thm}}
\section{Quantum tori}
\label{Quantum_tori}

\subsection{Reminder on quantum tori}

In this subsection, we will present some definitions on the quantum tori (see the section 3 of \citep{VQS}  for the details). \\ 
A quantum torus is the replacement of the algebraic torus $\T^d \coloneqq (\C^*)^d\simeq \C^d/\Z^d$ in the irrational setting:

\begin{Def}
Let $\Gamma$ be a finitely generated subgroup of $\R^d$. The quantum torus associated to $\Gamma$ is the quotient stack
\[
\Tscr_{\Gamma} \coloneqq [\C^d/\Gamma]
\]
where $\Gamma$ acts on $\C^d$ by translation.
\end{Def}

Since we want to keep track of the generators thanks to the calibration $h \colon \Z^N \to \Gamma$, we have to consider "calibrated" quantum tori: 

\begin{Def}
Let $(h \colon \Z^N \to \Gamma, \Ical)$ be a calibration of a finitely generated subgroup $\Gamma$ of $\R^d$. Then, the calibrated quantum torus associated to $(h,\Ical)$ is
\[
\Tscr^{cal}_{h,\Ical}=[\C^d/\Z^N]
\]
where $\Z^N$ acts on $\C^d$ through $h$
\end{Def}
\begin{Rem}
 In order to define morphisms between calibrated quantum toric varieties, we have to consider the pair $(\Tscr_{h,\Ical},h)$ in place of just the quantum torus $\Tscr_{h,\Ical}$.
\end{Rem}

\begin{Prop}[Multiplicative form]
Let $\Gamma$ be a finitely generated subgroup of $\R^d$ containing $\Z^d$ and $(h \colon \Z^N \to \Gamma, \Ical)$ be a calibration of $\Gamma$. Then,
\begin{itemize}
    \item we have a stack isomorphism $\Escr : \Tscr_\Gamma \to [\T^d/E(\Gamma)]$ such that the following diagram commutes
    \begin{equation} \label{exp_quotient_noncal}
    \begin{tikzcd}
	{\C^d} & {\T^d} \\
	{\Tscr_{\Gamma}} & {[\T^d/E(\Gamma)]}
	\arrow["E", from=1-1, to=1-2]
	\arrow[from=1-2, to=2-2]
	\arrow[from=1-1, to=2-1]
	\arrow["\mathscr{E}",from=2-1, to=2-2]
\end{tikzcd}\end{equation}
where \begin{equation} \label{exp_map}
    E: (z_1,\ldots,z_p) \in \C^d \mapsto (\mathrm{e}^{2i\pi z_1},\ldots,\mathrm{e}^{2i\pi z_p}) \in \T^d
\end{equation} 
and $E(\Gamma)$ acts on $\T^d$ by component-wise multiplication.
\item If we suppose $h(e_i)=e_i$ for $1 \leq i \leq d$ then we have a stack isomorphism $\Escr$ such that the following diagram commutes
\begin{equation} \label{exp_quotient_cal}
\begin{tikzcd}
	{\C^d} & {\T^d} \\
	{\Tscr^{cal}_{h,\Ical}} & {[\T^d/\Z^{N-d}]}
	\arrow["E", from=1-1, to=1-2]
	\arrow[from=1-2, to=2-2]
	\arrow[from=1-1, to=2-1]
	\arrow["\mathscr{E}",from=2-1, to=2-2]
\end{tikzcd}\end{equation}
\end{itemize}
\end{Prop}

\begin{Lemma}
Let $\Gamma$ be a finitely generated subgroup of $\R^d$ containing $\Z^d$ and $(h \colon \Z^N \to \Gamma, \Ical)$ be a calibration of $\Gamma$. Then,
\[\Tscr^{cal}_{h,\Ical} \simeq \Tscr_{\Gamma} \times \mathscr{B}\ker(h)\]
where $\mathscr{B}\ker(h)=[\Spec\C/\ker(h)]$.
\end{Lemma}

\begin{Def} \label{torus_morphis_noncal}
A (non-calibrated) torus morphism $\Tscr_{\Gamma} \to \Tscr_{\Gamma'}$ is a stack morphism $\lscr : \Tscr_{\Gamma} \to \Tscr_{\Gamma}$ such that there exists a linear morphism $L \colon \R^d \to \R^{d'}$ which sends $\Gamma$ on $\Gamma'$ and such that the diagram
\begin{equation*} %\label{torus_morphism}
  \begin{tikzcd}[ampersand replacement=\&]
	{\C^d} \& {\C^{d'}} \\
	{\Tscr_{\Gamma}} \& {\Tscr_{\Gamma'}}
	\arrow["{L_\C}", from=1-1, to=1-2]
	\arrow[from=1-1, to=2-1]
	\arrow[from=1-2, to=2-2]
	\arrow["\lscr"', from=2-1, to=2-2]
\end{tikzcd}
\end{equation*}

commutes
\end{Def}

\begin{Def} \label{torus_morphis_cal}
A (calibrated) torus morphism $\Tscr^{cal}_{h,\Ical} \to \Tscr^{cal}_{h',\Ical'}$ is a stack morphism $\lscr^{cal} : \Tscr_{h,\Ical} \to \Tscr_{h',\Ical'}$ such that there exists a linear morphism $L \colon \R^d \to \R^{d'}$, a linear morphism $\Z^n \to \Z^{n'}$ and a morphism $s : \Ical \to \Ical'$ such that 
\begin{itemize}
\item the diagram
\[\begin{tikzcd}[ampersand replacement=\&]
	{\Z^n} \& {\Z^{n'}} \\
	\Gamma \& {\Gamma'}
	\arrow["h"', from=1-1, to=2-1]
	\arrow["H", from=1-1, to=1-2]
	\arrow["{h'}", from=1-2, to=2-2]
	\arrow["L"', from=2-1, to=2-2]
\end{tikzcd}\]
commutes
    \item the diagram
   \[\begin{tikzcd}[ampersand replacement=\&]
	{\C^d} \& {\C^{d'}} \\
	{\Tscr_{h,\Ical}} \& {\Tscr_{h',\Ical'}}
	\arrow["{L_\C}", from=1-1, to=1-2]
	\arrow[from=1-1, to=2-1]
	\arrow[from=1-2, to=2-2]
	\arrow["{\lscr^{cal}}"', from=2-1, to=2-2]
\end{tikzcd}\]
commutes
\item For all $i \in \Ical$, $H(e_i)=e_{s(i)}$ 
\item For all $i \notin \Ical, H(e_i) \in \bigoplus_{j \notin \Ical'} \Z e_j$
\end{itemize}

\end{Def}

%Let $\Gamma$ be a finitely generated subgroup of $\R^d$ and $h: \Z^N \to \Gamma$ a calibration of $\Gamma$ (note $\Ical$ the set of virtual generators).   \\

%The standard calibrated quantum torus associated to these data is the quotient stack $\Tscr^{cal}_{h,\Ical}=[\C^d/\Z^N]$ where $\Z^N$ acts on $\C^d$ through the morphism $h$. 

\subsection{Presented quantum tori}

The quantum tori as defined in the previous subsection are not suitable for the non-simplicial case since a non-simplicial cone can have more than $d$ generators. Hence, we have to consider quantum tori as a quotient stack of a higher-dimensional space $\C^p$. To do this, we will replace the group $\Gamma$ and its calibration $h$ by a subgroup $G$ of $\R^p$ isomorphic to it with a calibration $\varphi$ of it. Then, we describe these quantum tori by the data of the underlying standard quantum torus (to keep track of the combinatorial data of the group $\Gamma$) with the data of a stack isomorphism, respecting the virtual generators, with such quotient stack. More precisely, 
\begin{Def} \label{calibrated_qtorus_def}
A presented calibrated quantum torus is a 6-uple $(\Tscr^{cal}_{h,\Ical},\varphi: \Z^N \to G \subset \R^p,\Ical',L,H,s)$ where $\varphi$ is a calibration of the group $G$ (with $\Ical'$ its set of virtual generators), $L: \R^p\to \R^d$ is a linear epimorphism, $H: \Z^N \to \Z^N$ is a group isomorphism  and $s: \Ical \to \Ical'$ is a bijection such that:
\begin{itemize}
\item $L(G)=\Gamma$ and $L_{|G}: G \to \Gamma$ is a group isomorphism.
\item The diagram
\[\begin{tikzcd}[ampersand replacement=\&]
	{\Z^n} \& {\Z^n} \\
	G \& \Gamma
	\arrow[from=1-1, to=1-2]
	\arrow["\varphi"', from=1-1, to=2-1]
	\arrow["h", from=1-2, to=2-2]
	\arrow["{L_{|G}}"', from=2-1, to=2-2]
\end{tikzcd}\]
is commutative.
\item For all $i \in\Ical, H(e_i)=e_{s(i)}$ and for all $i \notin \Ical$, $H(e_i) \in \bigoplus_{j \notin \Ical'} \Z e_j$
\end{itemize} 
The morphism $\varphi$ is the calibration of this presented calibrated quantum torus.
\end{Def}

With these data, we can define the quotient stack  $[\C^p/\Z^N \times \ker(L \otimes_\R id_\C)]$, where $\Z^N \times \ker(L \otimes_\R id_\C)$ acts on $\C^p$ through
\[ (m,w) \cdot z=z+\varphi(m)+w,\]
More precisely, these data are equivalent to the datum of a stack isomorphism 
\[ [\C^p/\Z^N \times \ker(L \otimes_\R id_\C)]  \simeq \Tscr^{cal}_{h,\Ical}\]
which respects the combinatorial data. \\
If $L$ is a linear isomorphism then this stack isomorphism is a calibrated torus isomorphism as defined in definition \ref{torus_morphis_cal}.

We can define in the same way non-calibrated quantum tori: 
\begin{Def} \label{qtorus_def}
%\begin{itemize}
A presented non-calibrated quantum torus is a couple $(\Tscr_{d,\Gamma},L)$ where $\Tscr_{d,\Gamma}$ is the non-calibrated quantum torus associated to $\Gamma$ and $L$ is a linear epimorphism $\R^p \to \R^d$.

\end{Def}

In particular, the linear morphism descends to a stack isomorphism \[ [\C^p/L_\C^{-1}(\Gamma)] \simeq \Tscr_{d,\Gamma} \] 
(where $L_\C=L \otimes_\R id_\C$ and $L_\C^{-1}(\Gamma)$ acts on $\C^p$ by translations)  which is a quantum torus isomorphism if $L$ is a linear isomorphism as defined in \ref{torus_morphis_noncal}.

In both cases, we can define a multiplicative form of the torus thanks to the exponential map \eqref{exp_map} :
\begin{enumerate}
    \item In the non-calibrated case, we have an isomorphism (see diagram \eqref{exp_quotient_noncal}): 
    \[ [\C^p/L_\C^{-1}(\Gamma)] \simeq [\T^p/E(L_\C^{-1}(\Gamma))] 
    \]
    where $E(L_\C^{-1}(\Gamma))$ acts multiplicatively on $\T^p$.
    \item In the calibrated case, we have to suppose $h$ standard, i.e., we suppose that there is a subset $\widetilde{I}=\{i_1,\ldots,i_d\} \subset \{1,\ldots,p\}$ such that $\Z^{\widetilde{I}} \subset G$ and $h(e_k)=e_{i_k}$ for $k=1..d$. Then, the exponential map gives us:
    \[ [\C^p/\Z^N \times \ker(L_\C)] \simeq [\T^p/\Z^{N-d} \times E(\ker(L_\C))] \]
    where $\Z^{N-d} \times E(\ker(L_\C))$ acts on $\T^p$ through 
    \[ (m,E(w)) \cdot z=E(\varphi(0\oplus m)+w)z
    \]

\end{enumerate}

\begin{Def}
A morphism of presented non-calibrated quantum tori \[(\Tscr_{d,\Gamma},L) \to (\Tscr_{d',\Gamma'},L')\] or presented torus morphism is a couple $(\Lcal,\Lcal')$ of linear morphisms such that 
\begin{itemize}
    \item $\Lcal(\Gamma) \subset \Gamma'$
    \item The following diagram commutes
    \[\begin{tikzcd}[ampersand replacement=\&]
	{\C ^d} \& {\C^p/\ker(L_\C)} \\
	{\C^{d'}} \& {\C^{p'}/\ker(L'_\C)}
	\arrow["{\Lcal'}", from=1-2, to=2-2]
	\arrow["\Lcal"', from=1-1, to=2-1]
	\arrow["{[L_\C]}"', from=1-2, to=1-1]
	\arrow["{[L'_\C]}", from=2-2, to=2-1]
\end{tikzcd}\]
\end{itemize}

where $[L]$ is the morphism induced by $L$ on the quotient.
\end{Def}

\begin{Def} \label{morphism_presented_tori}
A morphism of presented calibrated quantum tori \[(\Tscr^{cal}_{h,\Ical},\varphi: \Z^N \to G \subset \R^p,\widetilde{\Ical},L,H,s) \to (\Tscr^{cal}_{h',\Ical'},\varphi': \Z^{N'} \to G' \subset \R^{p'},\widetilde{\Ical}',L',H',s')\]  or presented calibrated torus morphism  is a 6-uple $(\Lcal,\Hcal, \Scal, \Lcal ',\Hcal ', \Scal ')$ of morphisms such that 
\begin{itemize}
\item $(\Lcal,\Hcal,\Scal)$ induces a  torus morphism $\Tscr^{cal}_{h,\Ical} \to \Tscr^{cal}_{h',\Ical'}$,

\item  for all $j \in \widetilde{\Ical}, \Hcal'(e_j)=e_{\Scal'(j)}$ and for all  $j \notin \widetilde{\Ical}$, 
\[
\Hcal'(e_j) \in \bigoplus_{ k \notin \widetilde{\Ical}'} \Z e_k
\]

\item The following diagrams commute: 

\[\begin{tikzcd}[ampersand replacement=\&]
	{\C^n} \& {\C^n} \& {\C^d} \& {\C^p/\ker(L_\C)} \\
	{\C^{n'}} \& {\C^{n'}} \& {\C^{d'}} \& {\C^{p'}/\ker(L'_\C)}
	\arrow["{\Lcal'}", from=1-4, to=2-4]
	\arrow["{[L_\C]}"', from=1-4, to=1-3]
	\arrow["h", from=1-2, to=1-3]
	\arrow["H"', from=1-2, to=1-1]
	\arrow["{H'}", from=2-2, to=2-1]
	\arrow["{h'}"', from=2-2, to=2-3]
	\arrow["{[L'_\C]}", from=2-4, to=2-3]
	\arrow["\Lcal", from=1-3, to=2-3]
	\arrow["\Hcal", from=1-2, to=2-2]
	\arrow["{\Hcal'}"', from=1-1, to=2-1]
\end{tikzcd}\]

\[\begin{tikzcd}[ampersand replacement=\&]
	\Ical \& {\widetilde{\Ical}} \\
	{\Ical'} \& {\widetilde{\Ical}'}
	\arrow["s", from=1-1, to=1-2]
	\arrow["{\Scal'}", from=1-2, to=2-2]
	\arrow["{s'}"', from=2-1, to=2-2]
	\arrow["\Scal"', from=1-1, to=2-1]
\end{tikzcd}\]
\end{itemize}

\end{Def}

In particular, we have the following commutative diagram showing the relation between the different calibrations: 

\[\begin{tikzcd}[ampersand replacement=\&]
	{\Z^n} \& {\Z^n} \& {\Z^{n'}} \& {\Z^{n'}} \\
	G \& \Gamma \& {\Gamma'} \& {G'}
	\arrow["\varphi"', from=1-1, to=2-1]
	\arrow["{\varphi'}", from=1-4, to=2-4]
	\arrow["{H'^{-1}}", from=1-3, to=1-4]
	\arrow["\Hcal", from=1-2, to=1-3]
	\arrow["H", from=1-1, to=1-2]
	\arrow["h", from=1-2, to=2-2]
	\arrow["L"', from=2-1, to=2-2]
	\arrow["\Lcal"', from=2-2, to=2-3]
	\arrow["{h'}", from=1-3, to=2-3]
	\arrow["{(L'_{G'})^{-1}}"', from=2-3, to=2-4]
\end{tikzcd}\]
We can reformulate this in term of quotient stack:
\begin{Lemma} \label{equiv_6uple}
We use the same notations as definition \ref{morphism_presented_tori}. \\
Let $\lscr': [\C^p/\Z^{N} \times \ker(L_\C)] \to [\C^{p'}/\Z^{N'} \times \ker(L'_\C)]$ be the stack morphism described by the linear morphisms $(\Lcal',\Hcal')$ and $\lscr^{cal}: \Tscr^{cal}_{d,\Gamma} \to \Tscr^{cal}_{d',\Gamma'}$ be the stack morphisms described by the linear morphisms $(\Lcal,\Hcal)$. Then, the diagram

\[\begin{tikzcd}[ampersand replacement=\&]
	{[\C^p/\Z^n \times \ker(L_\C)]} \&\& {[\C^{p'}/\Z^{n'} \times \ker(L'_\C)]} \\
	{\Tscr_{h,\Ical}} \&\& {\Tscr_{h',\Ical'}}
	\arrow["{\lscr'}", from=1-1, to=1-3]
	\arrow["\simeq"', from=1-1, to=2-1]
	\arrow["\simeq", from=1-3, to=2-3]
	\arrow["{\lscr^{cal}}"', from=2-1, to=2-3]
\end{tikzcd}\]
commutes (where the isomorphisms are the stack isomorphisms encoded by the presentation of the presented calibrated quantum tori). 
\end{Lemma}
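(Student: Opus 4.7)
The plan is to represent each of the four morphisms in the square by a pair (linear map on the ambient affine space, group morphism on the acting group) satisfying an obvious equivariance, and then verify that the two compositions around the square yield the same such pair, up to the gauge freedom that is inherent to representing a morphism of quotient stacks. Since any morphism between quotient stacks is induced by an equivariant pair, this will yield the required (2-)commutativity.

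First I would unpack the vertical isomorphisms. The left vertical arrow is presented by
\[
(L_\C,\ \pi)\colon \bigl(\C^p,\ \Z^N\times\ker(L_\C)\bigr)\longrightarrow \bigl(\C^d,\ \Z^N\bigr),\qquad \pi(m,w)=H(m),
\]
equivariance being the equality $L_\C\circ\varphi = h\circ H$ from Definition \ref{calibrated_qtorus_def} together with the fact that $L_\C$ kills $\ker(L_\C)$. The right vertical arrow is presented analogously by $(L'_\C,\pi')$. The bottom arrow $\lscr^{cal}$ is by construction presented by $(\Lcal,\Hcal)$, equivariance being the middle square $\Lcal\circ h = h'\circ\Hcal$ of Definition \ref{morphism_presented_tori}. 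Finally $\lscr'$ is presented by any choice of lift $\widetilde{\Lcal'}\colon \C^p\to\C^{p'}$ of the quotient map $\Lcal'$, together with a group morphism $\Z^N\times\ker(L_\C)\to\Z^{N'}\times\ker(L'_\C)$ whose projection to $\Z^{N'}$ is $\Hcal'$ (and whose $\ker(L'_\C)$-component absorbs the image of $\ker(L_\C)$ under $\widetilde{\Lcal'}$ and the ambiguity in the choice of lift).

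Then I would compute the two composed pairs and compare. Down-then-right yields
\[
\bigl(\Lcal\circ L_\C,\ (m,w)\mapsto \Hcal(H(m))\bigr),
\]
while right-then-down yields
\[
\bigl(L'_\C\circ\widetilde{\Lcal'},\ (m,w)\mapsto H'(\Hcal'(m))\bigr),
\]
since the $\ker(L'_\C)$-component of the middle group map is killed upon postcomposition with $L'_\C$ (and similarly on $\C^{p'}$, postcomposing with $L'_\C$ removes the ambiguity in the lift). Equality of the space components is precisely the rightmost square of the diagram in Definition \ref{morphism_presented_tori}, which asserts $\Lcal\circ[L_\C]=[L'_\C]\circ\Lcal'$; postcomposing by $L'_\C$ turns this ``equality modulo $\ker(L'_\C)$'' into an honest equality of maps $\C^p\to\C^{d'}$. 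Equality of the group components is the concatenation of the two left squares of the same diagram, i.e.\ $\Hcal\circ H = H'\circ\Hcal'$.

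The main (minor) obstacle I anticipate is the bookkeeping of the gauge freedom in the lift $\widetilde{\Lcal'}$ and in the $\ker(L'_\C)$-component of the group morphism presenting $\lscr'$: different choices yield 2-isomorphic, not strictly equal, stack morphisms, so the comparison must be phrased at the level of quotient stacks rather than at the level of strict equivariant maps. Once this is settled by the observation in the previous paragraph that $L'_\C$ annihilates both sources of ambiguity, the commutativity of the square reduces to the two equalities extracted directly from Definition \ref{morphism_presented_tori}, and the lemma follows.
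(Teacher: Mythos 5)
The paper does not actually supply a proof of Lemma~\ref{equiv_6uple}; it is stated as an immediate consequence of Definition~\ref{morphism_presented_tori} and the ``relation between the different calibrations'' diagram that precedes it. Your strategy---presenting each of the four arrows by an equivariant pair (ambient linear map, group homomorphism), computing the two composites, and quotienting away the ambiguity in the lift of $\Lcal'$ and in the $\ker(L'_\C)$-component---is exactly the expected bookkeeping, and the identifications of $\pi=(m,w)\mapsto H(m)$ and $\pi'=(m',w')\mapsto H'(m')$ for the vertical arrows, and $\Lcal\circ L_\C=L'_\C\circ\widetilde{\Lcal'}$ for the space components, are correct and correctly sourced.

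One point deserves flagging. You attribute $\Hcal\circ H=H'\circ\Hcal'$ to ``the concatenation of the two left squares'' of the diagram in Definition~\ref{morphism_presented_tori}, but the leftmost square of that diagram, read with its printed orientation $\C^N\xleftarrow{H}\C^N$ and $\C^{N'}\xleftarrow{H'}\C^{N'}$, literally says $\Hcal'\circ H=H'\circ\Hcal$, which is \emph{not} the same identity. The identity you actually need (and which your computation demands) is $\Hcal\circ H=H'\circ\Hcal'$, i.e.\ $\Hcal'=H'^{-1}\circ\Hcal\circ H$; this is exactly what the four-column ``relation between calibrations'' diagram in the paper asserts, comparing its first and last columns, and it is consistent with $H,H'$ going from the $\varphi$-side lattices to the $h$-side lattices as in Definition~\ref{calibrated_qtorus_def}. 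So the identity is correct, but it comes from the calibration-comparison diagram (or equivalently from the leftmost square with the arrow orientations corrected), not from a concatenation of two squares. With that reference tightened, your argument is complete and is what the paper implicitly has in mind.
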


\begin{Rem}
By definition, the category of presented quantum tori is a subcategory of the arrow category of quantum tori which is the category whose objets are torus morphisms and the morphisms between them are the diagrams of the form 
\[\begin{tikzcd}
	{\Tscr_{0,0}} & {\Tscr_{1,0}} \\
	{\Tscr_{0,1}} & {\Tscr_{1,1}}
	\arrow[from=1-1, to=1-2]
	\arrow["{f_0}"', from=1-1, to=2-1]
	\arrow["{f_1}", from=1-2, to=2-2]
	\arrow[from=2-1, to=2-2]
\end{tikzcd}.\]

\end{Rem}

The presented quantum tori are (essentially) not new quantum tori. In fact, all presented quantum tori with the same underlying quantum tori are isomorphic:

\begin{Lemma} \label{isom_torus}
$(\Tscr^{cal}_{h,\Ical},\varphi: \Z^N \to G \subset \R^p,\Ical',L,H,s)$ and $(\Tscr^{cal}_{h,\Ical},h: \Z^N \to \Gamma,\Ical,id_{\R^d},id_{\R^N},id_\Ical)$ (resp.  $(\Tscr_{d,\Gamma},L)$ and $(\Tscr_{d,\Gamma},id)$) are isomorphic.

\end{Lemma}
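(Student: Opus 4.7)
The plan is to construct an explicit isomorphism by taking the underlying torus morphism $(\Lcal,\Hcal,\Scal)$ of Definition~\ref{morphism_presented_tori} to be the identity on $\Tscr^{cal}_{h,\Ical}$. The commutativity of the three squares of the large diagram then forces the remaining triple $(\Lcal',\Hcal',\Scal')$ uniquely: since the target presentation $(\Tscr^{cal}_{h,\Ical},h,\Ical,id,id,id)$ has $L'=id$ and $H'=id$, the left square forces $\Hcal'=H^{-1}$, the right square forces $\Lcal'=[L_\C]$, and the square of virtual-generator bijections forces $\Scal'=s^{-1}$.

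The candidate $\Lcal'=[L_\C] \colon \C^p/\ker(L_\C) \to \C^d$ is a linear isomorphism because the definition of a presented calibrated quantum torus requires $L \colon \R^p \to \R^d$ to be a linear epimorphism. The only genuinely combinatorial check is that $\Hcal'=H^{-1}$ satisfies the virtual generator condition of Definition~\ref{morphism_presented_tori}. The third bullet of Definition~\ref{calibrated_qtorus_def} says that $H$ sends $e_i$ to $e_{s(i)}$ for $i \in \Ical$ and sends $e_i$ into $\bigoplus_{j \notin \Ical'} \Z e_j$ for $i \notin \Ical$; with respect to the decompositions $\Z^N = \Z^\Ical \oplus \Z^{\Ical^c} = \Z^{\Ical'} \oplus \Z^{(\Ical')^c}$, $H$ is therefore block-diagonal with the permutation $s$ in the first block. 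Because $H$ is a group isomorphism, the second block must be an isomorphism $\Z^{\Ical^c} \to \Z^{(\Ical')^c}$, so $H^{-1}(e_j)=e_{s^{-1}(j)}$ for $j \in \Ical'$ and $H^{-1}(e_j) \in \bigoplus_{k \notin \Ical} \Z e_k$ for $j \notin \Ical'$, as required. The middle square of the big diagram reads $h=h$ and the small $\Scal$-square commutes tautologically. The inverse morphism is obtained by inverting componentwise: $(id,id,id,[L_\C]^{-1},H,s)$.

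The non-calibrated statement follows the same pattern but is essentially trivial: the $2$-uple $(id,[L_\C])$ defines a presented torus morphism $(\Tscr_{d,\Gamma},L) \to (\Tscr_{d,\Gamma},id)$ whose inverse is $(id,[L_\C]^{-1})$, the key point being again that $L$ being a linear epimorphism makes $[L_\C]$ an isomorphism on the quotient.

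Conceptually the lemma records that the stack isomorphism $[\C^p/\Z^N \times \ker(L_\C)] \simeq \Tscr^{cal}_{h,\Ical}$ already noted after Definition~\ref{calibrated_qtorus_def} is realized by a presented torus isomorphism in the sense of Definition~\ref{morphism_presented_tori}; no real obstacle appears, and the proof reduces to bookkeeping the diagram chase between the presentation data and the canonical presentation by identity maps.
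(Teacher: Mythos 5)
Your proof is correct and takes essentially the same approach as the paper, which simply exhibits an explicit $6$-tuple after taking $(\Lcal,\Hcal,\Scal)=(id,id,id)$. One small point of comparison: the paper records the tuple as $(id,id,id,[L],H,s)$, but a direct reading of the left square and the $\Scal$-square in Definition~\ref{morphism_presented_tori} with $\Hcal=\Scal=id$ and $H'=id$, $s'=id$ forces $\Hcal'=H^{-1}$ and $\Scal'=s^{-1}$, exactly as you derive; so your tuple $(id,id,id,[L_\C],H^{-1},s^{-1})$ is the one consistent with the stated diagram conventions, and the extra check that $H^{-1}$ respects the virtual-generator condition is a worthwhile detail the paper omits.
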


\begin{proof}
An isomorphism is given by $(id_{\C^d},H,s,[L_\C],id_{\C^N},id_{\Ical'})$ (resp. $(id_{\C^d},[L_\C])$).

\end{proof}

\begin{Prop} \label{forgetful_isom}
The forgetful functor \[(\Tscr^{cal}_{h,\Ical},\varphi: \Z^N \to G,\Ical',L,H,s) \to \Tscr^{cal}_{h,\Ical},\] resp. $(\Tscr_{d,\Gamma},L) \to \Tscr_{d,\Gamma}$, is an equivalence of categories between the category of presented calibrated quantum tori and the category of standard calibrated quantum tori, resp. between the category of non-calibrated quantum tori and the category of non-calibrated standard quantum tori.

\end{Prop}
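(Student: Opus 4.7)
The plan is to verify the two standard criteria defining an equivalence of categories: essential surjectivity and full faithfulness. The calibrated and non-calibrated cases are structurally identical (the latter is strictly simpler), so I focus on the calibrated statement and note the adaptations at the end.

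For essential surjectivity, I would exhibit a canonical preimage. Given a standard calibrated quantum torus $\Tscr^{cal}_{h,\Ical}$ with $h : \Z^N \to \Gamma \subset \R^d$, the \emph{trivial presentation} $(\Tscr^{cal}_{h,\Ical},\, h,\, \Ical,\, \mathrm{id}_{\R^d},\, \mathrm{id}_{\Z^N},\, \mathrm{id}_\Ical)$ tautologically satisfies the three conditions of Definition~\ref{calibrated_qtorus_def}, and the forgetful functor sends it to $\Tscr^{cal}_{h,\Ical}$ on the nose. The non-calibrated analogue is $(\Tscr_{d,\Gamma},\mathrm{id}_{\R^d})$.

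For full faithfulness, the central lever is Lemma~\ref{isom_torus}: every presented (calibrated) torus is isomorphic, inside its own category, to the trivial presentation of its underlying standard torus. Since isomorphisms of objects induce bijections on Hom-sets, it suffices to prove that when both source and target are trivial presentations, a morphism of presented tori is exactly the datum of a morphism of the underlying standard tori. In the trivial case, all of $L, L', H, H', s, s'$ are identities, so the commutative diagrams of Definition~\ref{morphism_presented_tori} force $\Lcal' = \Lcal$, $\Hcal' = \Hcal$ and $\Scal' = \Scal$; meanwhile the constraints on $\Hcal'$ relative to $\widetilde{\Ical} = \Ical$ collapse to the analogous constraints on $\Hcal$ already present in the definition of a morphism of standard calibrated quantum tori (definition 3.11 of \cite{VQS}). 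Thus the 6-uple is nothing more than its first triple duplicated, and the desired bijection is immediate.

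The main obstacle I anticipate is bookkeeping rather than conceptual: one has to check that the candidate isomorphism $(\mathrm{id},\mathrm{id},\mathrm{id},[L],H,s)$ provided by Lemma~\ref{isom_torus} actually obeys every clause of Definition~\ref{morphism_presented_tori}, most delicately the clause on virtual generators ensuring that $\Hcal' = H$ preserves the splitting $\Ical \sqcup (\{1,\dots,N\}\setminus \Ical)$ through $s$. But this is exactly what the last two bullets of Definition~\ref{calibrated_qtorus_def} impose on $H$ and $s$, so the verification reduces to a diagram chase. Once it is settled, the non-calibrated statement follows by the same reduction with couples $(\Lcal,\Lcal')$ in place of 6-uples, the commutative square on quotients forcing $\Lcal' = \Lcal$ whenever $L=L'=\mathrm{id}$.
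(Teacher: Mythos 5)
Your proof is correct and takes exactly the approach the paper has in mind: the paper's proof is the single line ``Use Lemma~\ref{isom_torus}'', and your argument is precisely the expansion of that — trivial presentations give essential surjectivity, and Lemma~\ref{isom_torus} reduces full faithfulness to the (trivial) comparison of trivial presentations, where the commutativity constraints in Definition~\ref{morphism_presented_tori} collapse the 6-uple to its first triple.
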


\begin{proof}
Use lemma \ref{isom_torus}
\end{proof}

\renewcommand{\theThm}{\arabic{section}.\arabic{subsection}.\arabic{subsubsection}.\arabic{Thm}}
\section{Definition of (non-simplicial) quantum toric varieties}
\label{NSQTV}
Let $\Gamma$ be a finitely generated subgroup of $\R^d$ such that $\Vect_\R(\Gamma)=\R^d$, $(\Delta,h^{cal} \colon \Z^N \to \Gamma, \Ical)$ a standard calibrated quantum fan (with $\Ical$ its set of virtual generators). In order to define a quantum variety associated to this fan, we will describe the affine quantum variety associated to each cone of this fan. The crucial point is to replace the calibration $h$ of $\Gamma$ by an adapted calibration $\varphi$ for each maximal cone $\sigma$ of the fan $\Delta$.

After that, we will define the quantum toric varieties with the descent data of affine pieces as \citep{VQS}. 

At the end of this section, we will prove that the correspondence \[(\Delta,h^{cal} \colon \Z^N \to \Gamma, \Ical) \mapsto \Xscr^{cal}_{\Delta,h^{cal},\Ical}\] is an equivalence of categories.
\subsection{ Affine quantum toric varieties}
Let
\[\sigma=\sigma_I=\Cone(v_{i_1}\coloneqq h^{cal}(e_{i_1}),\ldots,v_{i_p}\coloneqq h^{cal}(e_{i_p}))\] (with $I\coloneqq \{i_1,\ldots,i_p\} \subset \{1,\ldots,N\})$ be a strongly convex cone of $\Delta$. There are two possibilities: $\sigma$ is of maximal dimension i.e. of dimension $d$ or not. In the latter, we have to do a choice of a completion of $\Vect_\R(\sigma)$ in $\R^d$. Fortunately, all obtained quantum varieties are isomorphic and this isomorphism respects a cocycle condition. 
\subsubsection{Cones of dimension $d$}
%\section{Calibrated case}
\label{calibrated_case}
Suppose $\sigma$ is a cone of dimension $d$ i.e. $\Vect_\R(\sigma)=\R^d$. \\
Note $h_{\sigma}: \Z^I \to \Gamma$ the restriction of $h^{cal}$ on $\Z^I$, $h_{\sigma\C}: \C^I \to \C^d$ the $\C$-linear map associated to it and $\widehat{\sigma}$ the cone of $\R^N$ defined by  
\begin{equation} \label{cone_Rr}
\widehat{\sigma}=\Cone(e_i, i \in I)
\end{equation}
and $\widetilde{\sigma}$ its restriction on $\R^I$

Let $\Bcal=(v_i, i\in \widetilde{I})$ a subfamily of $(v_1,\ldots,v_p)$ which is a basis of $\C^d$. 
Then, we will consider the decomposition $\C^I=\C^{\widetilde{I}} \oplus \ker(h_{\sigma\C})$. The map $h_{\sigma\C}$ induces a linear isomorphism $\psi$ between $\C^{\widetilde{I}}$ and $\C^d$. \\
Let $\chi \in \Sfrak_N$ be a permutation such that $\chi(\{1,\ldots,d\})=\widetilde{I}$. 

\begin{Def}
The linear morphism $\varphi: \C^N \to \C^{\widetilde{I}} \hookrightarrow \C^I$ defined by 
\[e_k \mapsto \psi^{-1}(h^{cal}_\C(e_{\chi(k)}))\]
is called calibration associated to $\sigma$ (and $\Bcal$ and $\chi$)
\end{Def}

We can think this morphism as a calibration induced by the calibration $h$ on $\C^{I}$. Indeed, the image of $\Z^d \oplus 0$ by the morphism $\varphi$ is $\Z^{\widetilde{I}}$ (by construction) and the group $\varphi(\Z^N)$ is isomorphic to $\Gamma$.

We can define an action of $\Z^{N-d} \times E(\ker(h_{\sigma\C}))$ on $\C^I$ by setting for $(m,E(t)) \in \Z^{N-d} \times E(\ker(h_{\sigma\C}))$ and $z \in \C^I$,
\[ (m,t) \cdot z=E(\varphi(0 \oplus m))+t)z\]
\begin{Rem}
  The non-effectiveness of this action only comes from the calibration i.e. the subgroup of ineffectivity of this action is $\ker(h^{cal})$.   
\end{Rem}

The action of $\T^I$ on $\C^I=U_{\widetilde{\sigma}}$ commutes with this action. Hence, we can form the quotient $[\C^I/\Z^{N-d} \times E(\ker(h_{\sigma\C}))]$. 

\begin{Lemma}
The quotient stack $[\C^I/\Z^{N-d} \times E(\ker(h_{\sigma\C}))]$ depends neither on the choice of the basis $\Bcal$ nor on the permutation $\chi$. Hence, the action induced by the morphism $\varphi$ depends only on $\sigma$.
\end{Lemma}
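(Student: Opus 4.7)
Both the underlying space $\C^I$ and the subgroup $E(\ker h_{\sigma\C})\subset\T^I$ are intrinsic to the cone $\sigma$, depending only on $I$ and the restriction $h_\sigma$, not on the auxiliary data $\Bcal$ or $\chi$. Consequently, only the action-defining homomorphism
$$\beta_{\Bcal,\chi}\colon\Z^{N-d}\times E(\ker h_{\sigma\C})\longrightarrow \T^I,\qquad (m,E(t))\longmapsto E(\varphi(0\oplus m)+t),$$
can vary with the choices. My plan is to exhibit, for any two admissible pairs $(\Bcal,\chi)$ and $(\Bcal',\chi')$, a group automorphism $\eta$ of $\Z^{N-d}\times E(\ker h_{\sigma\C})$ satisfying $\beta_{\Bcal,\chi}=\beta_{\Bcal',\chi'}\circ\eta$. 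The induced isomorphism of quotient stacks is then the identity on $\C^I$, proving the lemma.

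The verification reduces to two elementary moves. In the first, the basis subset $\widetilde{I}=\chi(\{1,\ldots,d\})$ is kept fixed and only $\chi$ is varied; two admissible permutations then differ by an element $\tau=\chi^{-1}\chi'\in\Sfrak_{\{1,\ldots,d\}}\times\Sfrak_{\{d+1,\ldots,N\}}$, and only the second component $\tau_2$ affects $\varphi(0\oplus m)$, giving $\varphi'(0\oplus m)=\varphi(0\oplus\tau_2 m)$. The coordinate permutation $(m,E(t))\mapsto(\tau_2 m,E(t))$ intertwines.

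In the second move, one treats a single swap $\widetilde{I}'=(\widetilde{I}\setminus\{i\})\cup\{j\}$, with $\chi'$ obtained from $\chi$ by transposing the positions $k_1=\chi^{-1}(i)\in\{1,\ldots,d\}$ and $k_2=\chi^{-1}(j)\in\{d+1,\ldots,N\}$. A direct computation yields
$$h_{\sigma\C}\bigl(\varphi'(0\oplus m)-\varphi(0\oplus m)\bigr)=m_{k_2}(v_i-v_j)=h_{\sigma\C}\bigl(m_{k_2}(e_i-e_j)\bigr),$$
so that $\varphi'(0\oplus m)-\varphi(0\oplus m)-m_{k_2}(e_i-e_j)\in\ker h_{\sigma\C}$. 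Since $m_{k_2}(e_i-e_j)\in\Z^I=\ker E$, the element $E(\varphi(0\oplus m)-\varphi'(0\oplus m))$ genuinely lies in $E(\ker h_{\sigma\C})$, and
$$\eta\colon (m,E(t))\longmapsto\bigl(m,\,E(t)\cdot E(\varphi(0\oplus m)-\varphi'(0\oplus m))\bigr)$$
is a group automorphism of $\Z^{N-d}\times E(\ker h_{\sigma\C})$ (the homomorphism property follows from $\C$-linearity of $\varphi-\varphi'$ in $m$, and an explicit inverse is obtained by swapping the roles of $\varphi$ and $\varphi'$) which intertwines the two actions.

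Any two admissible pairs are connected by a sequence of elementary moves of these two types, so composing the corresponding automorphisms furnishes the desired intertwiner for arbitrary $(\Bcal,\chi)$ and $(\Bcal',\chi')$. The main subtlety lies in the second move: $\varphi-\varphi'$ does not itself take values in $\ker h_{\sigma\C}$ at the swapped coordinate, but only modulo the lattice $\Z^I$, and this discrepancy is precisely what is absorbed by the exponential map $E$. Once this point is acknowledged, both moves produce genuine intertwiners, and the quotient stack is shown to depend only on $\sigma$.
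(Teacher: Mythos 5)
Your proof is correct, but it takes a genuinely different route from the paper's. The paper exhibits a single explicit automorphism $\alpha$ of $\Z^{N-d}\times E(\ker h_{\sigma\C})$ covering an arbitrary change of $(\Bcal,\chi)$ in one step, namely
\[
\alpha(m,E(y))=\bigl(P_{\chi'}^{-1}P_\chi(m),\,E\bigl(\varphi(m)-\psi'^{-1}h(\varphi(m))\bigr)E(y)\bigr),
\]
where the correction term $\varphi(m)-\psi'^{-1}h(\varphi(m))$ lands \emph{exactly} in $\ker h_{\sigma\C}$ because $h_{\sigma\C}\circ\psi'^{-1}=\mathrm{id}$; the paper therefore never encounters the mod-$\Z^I$ phenomenon you isolate. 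You instead factor the change of auxiliary data through a sequence of elementary moves — a permutation move keeping $\widetilde{I}$ fixed, and a single-element basis exchange $\widetilde{I}\rightsquigarrow(\widetilde{I}\setminus\{i\})\cup\{j\}$ — and build an intertwiner for each. This decomposition buys transparency and lets you keep the $\Z^{N-d}$-coordinate untouched in the exchange move, but it costs you an appeal to the matroid basis-exchange lemma to chain the moves, and it forces you to confront the fact that $\varphi-\varphi'$ lies in $\ker h_{\sigma\C}$ only modulo $\Z^I=\ker E$. Your resolution of that point — subtracting $m_{k_2}(e_i-e_j)\in\Z^I$ before applying $E$ — is correct and is the real content of the exchange move; it is precisely the discrepancy that the paper's $\psi'^{-1}h\varphi$ trick is engineered to avoid. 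Both arguments are valid; the paper's is shorter, yours is more elementary at each step and makes the combinatorial source of the identification (basis exchange) explicit.
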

\begin{proof}

Let $(\Bcal=(v_i, i\in \widetilde{I}),\chi)$ and $(\Bcal'=(v_i, i\in \widetilde{I}'),\chi')$ be two pairs of basis and permutation. Then we can define two associated isomorphisms $\psi: \C^{\widetilde{I}} \to \C^d$ and $\psi': \C^{\widetilde{I}'} \to \C^d$ and two morphisms $\varphi: \C^N \to \C^I$, $\varphi': \C^N \to \C^{I'}$ . Then, the identity is an $\alpha$-equivariant morphism, where $\alpha$ is the morphism $\Z^{N-d} \times E(\ker(h_{\sigma\C})) \to \Z^{N-d} \times E(\ker(h_{\sigma\C}))$ defined by 
\[
\alpha(m,E(y))=(P_{\chi'}^{-1}P_\chi(m),E(\varphi(m)-\psi'^{-1}h(\varphi(m)))E(y))
\]
where $P_\chi$ is the linear map associated to $\chi$.

Indeed, the following diagram is commutative 

\[\begin{tikzcd}[ampersand replacement=\&]
	{E(h_\sigma^{-1}(\Gamma))} \&\& {E(h_\sigma^{-1}(\Gamma))} \\
	{\Z^{n-d}\times E(\ker(h_{\sigma\C}))} \&\& {\Z^{n-d}\times E(\ker(h_{\sigma\C}))}
	\arrow["{=}", from=1-1, to=1-3]
	\arrow["{\text{action through } \varphi}"', from=1-1, to=2-1]
	\arrow["{\text{action through } \varphi'}", from=1-3, to=2-3]
	\arrow["\alpha"', from=2-1, to=2-3]
\end{tikzcd}\]

\end{proof}

\begin{Def}
The stack $\Uscr_\sigma^{cal}\coloneqq [\C^I/\Z^{N-d} \times E(\ker(h_{\sigma\C}))]$ is the quantum toric variety associated to the cone $\sigma$ and to the calibration $h^{cal}: \Z^N\to \Gamma$.
\end{Def}

\begin{Rem}[Local models]
If $\sigma$ is non-simplicial (i.e. $\ker(h_{\sigma\C})$ is non-empty and thus is not discrete) then the quantum toric variety associated to $\sigma$ cannot be described as a quasifold (i.e. locally the quotient of a space $\R^n$ by a discrete subgroup, see \citep{quasifold}) in contrast to the simplicial case. \\
It is the product of such (stacky version of) quasifolds with the quotient of $\Theta_{|I|-d} \coloneqq [\C^{|I|-d}/(\C^*)^{|I|-d}]$. This last stack can be described as follows : 
an open point (given by the orbit $(\C^*)^{|I|-d}$), locally closed points (given by the orbits of the form $(\C^*)^J \times 0$ where $0<|J|<|I|-d$) and a closed point (given by the orbit $\{0\}$). Moreover, the closure of the point $x_J \coloneqq (\C^*)^J \times 0$ is 
\[
\overline{x_J}=[\C^J \times 0/(\C^*)^{|I|-d}]=\bigcup_{K \subset J} [(\C^*)^K/(\C^*)^{|I|-d}]=\bigcup_{K \subset J} x_K
\]
Hence, we find the stratification by quasifolds proved in \cite{battaglia2008geometric}.
\end{Rem}

The associated "torus" to $\Uscr_{\sigma}^{cal}$ is the stack $[\T^I/\Z^{N-d} \times E(\ker(h_{\sigma\C}))]$. We can endow this stack with a structure of presented calibrated quantum torus thanks to the morphisms used to define $\Uscr_{\sigma}^{cal}$:

\begin{Prop}
$(\Tscr_{h,\Ical}^{cal}, \varphi: \Z^N \to \psi^{-1}(\Gamma),h_{\sigma\C},\chi^{-1}(\Ical),P_\chi: e_i \mapsto e_{\chi(i)},\chi )$ is a presented calibrated quantum torus which encodes the stack $[\T^I/\Z^{N-d} \times E(\ker(h_{\sigma\C}))]$
\end{Prop}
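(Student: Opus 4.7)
The plan is to verify the six-part axiom of Definition \ref{calibrated_qtorus_def} item by item for the proposed 6-uple, and then to recognize the quotient stack it encodes, via the exponential construction of section \ref{Quantum_tori}, as the stated multiplicative quotient. First, $\varphi\colon \Z^N\to\psi^{-1}(\Gamma)\subset\C^I$ is a calibration with virtual generators $\chi^{-1}(\Ical)$: it is a group epimorphism because $h^{cal}$ is and $\psi\colon\C^{\widetilde I}\to\C^d$ is a linear isomorphism, and the formula $\varphi(e_k)=\psi^{-1}(h^{cal}(e_{\chi(k)}))$ transports the spanning condition on $\{h^{cal}(e_j)\}_{j\notin\Ical}$ to the spanning condition on $\{\varphi(e_k)\}_{k\notin\chi^{-1}(\Ical)}$ inside $\C^{\widetilde I}$. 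The restriction $L|_G=h_{\sigma\C}|_{\psi^{-1}(\Gamma)}$ is by construction nothing but $\psi|_{\psi^{-1}(\Gamma)}$ and is therefore a group isomorphism onto $\Gamma$.

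Commutativity of the defining square reduces on basis vectors to
\[
h_{\sigma\C}(\varphi(e_k))=h_{\sigma\C}(\psi^{-1}(h^{cal}(e_{\chi(k)})))=h^{cal}(e_{\chi(k)})=h^{cal}(P_\chi(e_k)),
\]
which is immediate from the definition of $\psi$ as the restriction of $h_{\sigma\C}$ to $\C^{\widetilde I}$. For the virtual-generator conditions on $H=P_\chi$, one exploits that $\widetilde I\subset I$ is disjoint from $\Ical$ (1-cone indices are not virtual-generator indices), so that the permutation $\chi$, which only needs to send $\{1,\ldots,d\}$ onto $\widetilde I$, can always be chosen to fix $\Ical$ pointwise. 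With this choice $\chi^{-1}(\Ical)=\Ical$, the bijection $s=\chi|_\Ical$ is the identity, $P_\chi(e_i)=e_i=e_{s(i)}$ for $i\in\Ical$, and $P_\chi(e_i)=e_{\chi(i)}\in\bigoplus_{j\notin\chi^{-1}(\Ical)}\Z e_j$ for $i\notin\Ical$, as required.

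For the second assertion, the recipe following Definition \ref{calibrated_qtorus_def} makes the 6-uple encode the additive quotient $[\C^I/(\Z^N\times\ker(h_{\sigma\C}\otimes_\R\mathrm{id}_\C))]$ with action $(m,w)\cdot z=z+\varphi(m)+w$. Since $\varphi$ is standard in the sense required for the exponential reduction (with distinguished subset $\widetilde I$, since $\Z^{\widetilde I}\subset\psi^{-1}(\Gamma)$ and $\varphi(e_k)=e_{\chi(k)}$ for the first $d$ coordinates), applying $E$ as in case (2) of the discussion of section \ref{Quantum_tori} produces exactly $[\T^I/(\Z^{N-d}\times E(\ker(h_{\sigma\C})))]$ with the action $(m,E(w))\cdot z=E(\varphi(0\oplus m)+w)z$, which is by construction the torus attached to $\Uscr_\sigma^{cal}$. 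The only real difficulty is the combinatorial bookkeeping: one must pin down the permutation $\chi$ compatibly with $\Ical$ and the direct-sum decomposition $\C^I=\C^{\widetilde I}\oplus\ker(h_{\sigma\C})$ used to manufacture $\psi$ and $\varphi$; once these are fixed, every axiom unpacks directly.
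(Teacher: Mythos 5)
Your proof is correct and follows essentially the same route as the paper: the paper's own proof is a one-line citation of the two facts that $\psi$ is induced by $h_{\sigma\C}$ on $\C^I/\ker(h_{\sigma\C})\simeq\C^{\widetilde I}$ and that $\varphi=\psi^{-1}h P_\chi$, and your verification simply unfolds those same facts through each axiom of Definition \ref{calibrated_qtorus_def}. Your explicit observation that $\chi$ should be chosen to fix $\Ical$ pointwise (possible since $\Ical$ is disjoint from both $\{1,\ldots,d\}$ and $\widetilde I$) is a useful remark that the paper leaves tacit but which is needed for the bijection $s=\chi|_{\Ical}\colon\Ical\to\chi^{-1}(\Ical)$ and the third bullet of the definition to hold on the nose.
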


\begin{proof}
This proposition comes from the fact that $\psi$ is induced by $h_{\sigma\C}$ on $\C^I/\ker(h_{\sigma\C}) \simeq \C^{\widetilde{I}}$ and the equality $\varphi=\psi^{-1} h P_\chi$.
\end{proof}

\begin{Not}
In what follows, we will omit the isomorphism and just write $[\T^p/\Z^{N-d} \times E(\ker(h_{\sigma\C}))]$ instead of this 6-uple. 
\end{Not}

\begin{Def}
An affine quantum toric variety is a stack $\Uscr$ with a dense open substack $\Tscr$ which is isomorphic (as stack) to an affine quantum toric variety associated to a cone and a calibration $(h,\Ical)$ and such that this isomorphism restricts to a isomorphism between $\Tscr$ and $\Tscr^{cal}_{h,\Ical}$.
\end{Def}
\begin{War}
If $\Gamma$ is a lattice, the stack $\Uscr_\sigma^{cal}=[\C^I/\ker(h_{\sigma\C})] \times \mathscr{B}\Z^{N-d}$ is not a (stack representable by a) variety (due to the $\mathscr{B} \Z^{N-d}$-part). Moreover, if we forget this $\mathscr{B}\Z^{N-d}$-part, we get a non-representable stack too.
\end{War}

However, if we replace the stack quotient by the GIT quotient, we get the classical toric variety associated to $\sigma$: 

\begin{Prop} \label{isom_non-simplicial}
Let $h : \Z^n \to \Gamma$ be a calibration (with $\Ical$ its set of virtual generators). \\
If $\Gamma$ is discrete and the set $\Ical$ is empty,
then we can define the (classical) toric variety $U_\sigma$ associated to $\sigma$ (and $\Gamma$) and
\[U_\sigma=\C^I\sslash E(\ker(h_{\sigma\C})) \]

where $\sslash$ denotes the categorical quotient\footnote{A categorical quotient of a variety $X$ with action of a Lie group $G$ is a $G$-invariant morphism $\pi \colon X \to Y$ such that every $G$-invariant morphism $X \to Z$ (uniquely) factors through $\pi$.}.
%the $\sslash$ denotes the GIT quotient. Moreover, all the points of $\C^I$ are semi-stable (see the definition in \citep{Mumford}) for the action of $E(\ker(h_{\sigma\C}))$ for the trivial line bundle $\C^I \times \C\to \C^I$  with the linearization defined for all $E(t) \in  E(\ker(h_{\sigma\C}))$ and for $(z,w) \in \C^I \times \C$, 
%\[
%E(t)\cdot (z,w)=(E(t) z, E(\left\langle t,a\right\rangle) w)
%\]
%for any $a \in \Z^I$.
\end{Prop}

\begin{proof}

In \citep{cox} (theorem 5.1.11), the authors prove that $\C^I \to U_\sigma$ is an almost geometric quotient for the action of $G$ defined by \[G\coloneqq \Hom_\Z(\coker(M=(v_i, i \in I)^T: \Z^I \to \Z^d),\C^*)\]
(The cokernel in this expression is also defined as the class group of the variety $U_\sigma$ see \citep{cox} p172/173).
We will prove that the group $G$ is isomorphic to $E(\ker(h_{\sigma \C}))$. To do this, we will prove that they are both isomorphic to the group $h_{\sigma\C}^{-1}(\Gamma)$: \\
%Since $h$ is an isomorphism, the action of the group $\Z^{n-d} \times E(\ker(h_{\sigma\C}))$ on $\C^I$ is the same action as the action of $E(h_{\sigma\C}^{-1}(\Gamma))$ on it thanks to the equality 
%\begin{equation} \label{action_effective}
 %  \{E(\varphi(0 \oplus m)+t) \mid m \in\Z^{N-d},E(t) \in E(\ker(h_{\sigma\C})) \}=E(h_{\sigma\C}^{-1}(\Gamma)).
%\end{equation} 
This group $h_{\sigma\C}^{-1}(\Gamma)$ can be written as a direct sum: 
\[
h_{\sigma\C}^{-1}(\Gamma)=\Gamma_0 \oplus \ker(h_{\sigma\C})
\]
where $\Gamma_0$ is a subgroup of $\C^I$ isomorphic to $\Gamma$. \\
If we suppose $\Gamma$ discrete, we can suppose too that $\Gamma = \Z^d$ without loss of generality. Then, the subgroup $\Gamma_0$ is a subgroup of $\Z^I$ and we have the equality 
\[
E(h_{\sigma\C}^{-1}(\Gamma))=E(\ker(h_{\sigma\C})).
\]

We will now prove that we have a group isomorphism $G \simeq E(h_{\sigma\C}^{-1}(\Gamma))$:\\
Since $\C^*$ is a divisible group (i.e. an injective abelian group) , we have a exact sequence:
\begin{equation}\label{sec_discret}
   \begin{tikzcd}[ampersand replacement=\&]
	0 \& G \& {\T^I} \& {\T^d} \& 0
	\arrow[from=1-1, to=1-2]
	\arrow[from=1-2, to=1-3]
	\arrow["\varphi", from=1-3, to=1-4]
	\arrow[from=1-4, to=1-5]
\end{tikzcd}
\end{equation} 
where the morphism $\varphi$ is the induced morphism by $\Hom(M,\C^*) $ and the isomorphisms $\Hom(\Z^I,\C^*) \to \T^I$,  $(u: \Z^I \to \C^*)\mapsto (u(e_i),i \in I)$ and $\Hom(\Z^d,\C^*) \to \T^d$,  $(u: \Z^d \to \C^*)\mapsto (u(e_1),\ldots,u(e_d))$.

Then, we deduce from the equality $h_{\sigma\C}=M_\C^T$ that $\varphi$ is the morphism $\T^I \to \T^d$ induced by $h$. Finally, thanks to the exact sequence \eqref{sec_discret}, we get the isomorphism $G \simeq E(h_{\sigma\C}^{-1}(\Z^d))
$ (since $\ker(E)=\Z^d)$ \\
%The semi-stability is proved in chapter 12 of \citep{LIT}.
The categorical quotient condition is proved in the theorem 5.1.9 and the part a) of theorem 5.1.11 of \citep{cox}.
\end{proof}
We have the equality \[\C^I\sslash E(\ker(h_{\sigma\C})) \times \mathscr{B}\Z^{n-d}=[\C^I/\Z^{N-d} \times E(\ker(h_{\sigma\C}))]\] 
if, and only if, $\sigma$ is a simplicial cone. Indeed, the categorical quotient (for the action of $G$) $\C^I\to U_\sigma$ is geometric if, and only if, $\sigma$ is a simplicial (see \citep{cox} theorem 5.1.11 b)).  
%cone because the set of semi-stable points is equal to the set of stable points if, and only if, the cone is simplicial (see \citep{LIT} proposition 12.1)
\begin{Ex} \label{ex_max}
Let \[\Gamma=\Z e_1+\Z e_2+\Z e_3+\Z(ae_1-be_2+ce_3) +\sum_{i=5}^N \Z v_i \subset \R^3\] with $a,b,c \in \R_{>0},v_i \in \R^3$ be a subgroup of $\R^3$, $h^{cal}: \Z^N \to \Gamma$ be the calibration of $\Gamma$ defined by $h^{cal}(e_i)=e_i$ for $i=1,2,3$, $h^{cal}(e_4)=ae_1-be_2+ce_3\eqqcolon v$ and $h^{cal}(e_i)=v_i$ for $i \geq 5$. Let $\sigma=\mathrm{Cone}(e_1,e_2,e_3,ae_1-be_2+ce_3)$ be a strongly convex cone of $\R^3$.

\begin{figure}[h]
\centering
\caption{The cone $\sigma$}
\begin{tikzpicture}[scale=2]
    \draw (0,0) to (-1,1) node[left]{$e_2$} to (0,2) node[above]{$e_3$} to (1,1.5) node[above right]{$v$} to (1,0.5) to (0,0);
    \draw (0,0) to (0,2);
    \draw[dashed] (-1,1) to (1,0.5) node[right]{$e_1$};
    \draw (0,0) to (1,1.5);
\end{tikzpicture}
\end{figure}

The morphism $h_\sigma$ is the restricted map $h^{cal}_{|\Z^4 \oplus 0}$ and the kernel $\ker(h_{\sigma\C})$ is the line
\[\ker(h_{\sigma\C})=\C(-a,b,-c,1)\]

We have a decomposition $\C^4=(\C^3 \oplus 0) \oplus \ker(h_{\sigma\C})$ and hence, an action of $\Z^{N-3}\times E(\ker(h_{\sigma \C}))$ on $\C^4$ defined by: 
\begin{equation} \label{action_ex}
    (m,E(t)) \cdot s=E(h(0 \oplus m)+t)s
\end{equation}
Then, $\Uscr_{\sigma}^{cal}=[\C^4/\Z^{N-3} \times E(\C(-a,b,-c,1))]$ 

The action of $\Z^{N-3} \times E(\C(-a,b,-c,1))$ on $\C^4$ is not effective. Indeed, $\ker(h^{cal})$ stabilizes every point of $\C^4$. Moreover, there are some points with bigger stabilizer:
\begin{itemize}
    \item The stabilizer of $0_{\C^4}$ is $\Z^{N-3} \times E(\C(-a,b,-c,1))$ ;
    \item The stabilizer of $(\C^3 \setminus \{0\} \oplus 0) \oplus 0$ is $\ker(h^{cal}) \times E(\ker(h_{\sigma\C})$ ;
    \item The stabilizer of $0 \oplus \C^*(-a,b,-c,1)$ is $\Z^{n-d} \times \{0\}$ ;
    \item The stabilizer of the other points are $\ker(h^{cal})$.
\end{itemize}

For the case $N=4$ and $a=b=c=1$, we find a stacky version of the toric variety $\C^4\sslash G=V(yt-xz) \subset \C^4$ (see \citep{fulton} p17)
\end{Ex}
\subsubsection{Cone of dimension $k<d$}
\label{cone_non_max}
Suppose $\sigma=\sigma_I \subset \R^d$ is a cone of dimension $k<d$. \\Let $J$ a subset of $[\![1,N]\!]$ of cardinal $d-k$ such that 
\[
\R^d=\Vect(\sigma) \oplus \Vect(v_j, j \in J)
\]
Hence, $\overline{h}_\sigma: \C^I \oplus \C^J \to \C^d, e_i \mapsto v_i$ is an epimorphism. 
Now, we can reuse the discussion of subsection \ref{calibrated_case} and define an action of $\Z^{N-d} \times E(\ker(\overline{h}_{\sigma\C}))$ on $\C^I \oplus \C^J$ (we will suppose that the permutation $\chi$ sends $\{k+1,\ldots, d\}$ on $J$). We remark that the toric variety $\C^I \times \T^J=U_{\widetilde{\sigma} \times 0}$ is preserved by this action and thus, we can define the affine quantum toric variety associated to $\sigma$: 
\[\Uscr^{cal}_\sigma\coloneqq [\C^I \times \T^J/\Z^{N-d} \times \ker(\overline{h}_{\sigma})] \]

\begin{Prop}
The affine quantum toric variety $\Uscr^{cal}_\sigma$ is well-defined i.e. if we change the family $(v_j, j \in J)$, we get an isomorphism which respects a cocycle condition.

\end{Prop}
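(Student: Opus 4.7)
Following the pattern of the independence lemma in Section~\ref{calibrated_case}, I want to exhibit, for any two admissible completions $J,J'\subset\{1,\ldots,N\}$, a distinguished stack isomorphism $\phi_{JJ'}\colon\Uscr^{cal}_{\sigma,J}\xrightarrow{\sim}\Uscr^{cal}_{\sigma,J'}$, and then verify the cocycle identity $\phi_{JJ''}=\phi_{J'J''}\circ\phi_{JJ'}$ for any triple $J,J',J''$. The construction splits into a purely linear step and an equivariance step.

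\emph{Linear ingredient.} Using the decomposition $\C^d=\Vect(\sigma)\oplus\Vect(v_{j'},j'\in J')$, each $v_j$ (for $j\in J$) can be written uniquely as $v_j=\sum_{i\in\widetilde{I}}\alpha^j_i v_i+\sum_{j'\in J'}\beta^j_{j'}v_{j'}$. These coefficients assemble into a linear isomorphism $\Phi_{JJ'}\colon\C^I\oplus\C^J\to\C^I\oplus\C^{J'}$, uniquely determined by being the identity on $\C^I$ and by satisfying $\overline{h}_\sigma^{J'}\circ\Phi_{JJ'}=\overline{h}_\sigma^J$. Its $\C^J\to\C^{J'}$ block $(\beta^j_{j'})$ is invertible --- being the change of basis of $\C^d/\Vect(\sigma)$ --- whereas its $\C^J\to\C^I$ block $(\alpha^j_i)$ has arbitrary complex entries. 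In particular $\Phi_{JJ'}$ identifies $\ker\overline{h}_\sigma^J$ with $\ker\overline{h}_\sigma^{J'}$.

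\emph{Descent and cocycle.} Transporting the action of $\Z^{N-d}\times E(\ker\overline{h}_\sigma^J)$ through $\Phi_{JJ'}$ and modeling the calculation on the definition of the correction $\alpha$ from the proof of the lemma in Section~\ref{calibrated_case}, I would produce a group isomorphism $\alpha_{JJ'}\colon \Z^{N-d}\times E(\ker\overline{h}_\sigma^J)\xrightarrow{\sim}\Z^{N-d}\times E(\ker\overline{h}_\sigma^{J'})$ such that $(\Phi_{JJ'},\alpha_{JJ'})$ is equivariant and preserves $\C^I\times\T^J\subset\C^I\oplus\C^J$; it then descends to the sought $\phi_{JJ'}$. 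For a third completion $J''$, both $\Phi_{JJ''}$ and $\Phi_{J'J''}\circ\Phi_{JJ'}$ fix $\C^I$ and intertwine $\overline{h}_\sigma^J$ with $\overline{h}_\sigma^{J''}$, so the uniqueness above forces $\Phi_{J'J''}\circ\Phi_{JJ'}=\Phi_{JJ''}$; the analogous equality $\alpha_{JJ''}=\alpha_{J'J''}\circ\alpha_{JJ'}$ follows from the construction on generators, yielding the cocycle condition.

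\emph{Main obstacle.} The essential technical difficulty lies in the descent step. Because the coefficients $\alpha^j_i$ are in general arbitrary complex numbers, $\Phi_{JJ'}$ does not literally send $\C^I\times\T^J$ into $\C^I\times\T^{J'}$: lifting $w\in\T^J$ to $\C^J$ introduces a $\Z^J$-ambiguity which, pushed into $\C^I$ via $(\alpha^j_i)$, is neither integral nor discrete. The real content of the proof is therefore to show that this ambiguity is exactly absorbed by an element of the $E(\ker\overline{h}_\sigma^{J'})$-action through $\alpha_{JJ'}$, so that the induced map on the quotient stacks is well-defined --- the non-simplicial analogue of the bookkeeping behind the correction $\alpha$ in Section~\ref{calibrated_case}.
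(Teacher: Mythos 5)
Your plan takes a genuinely different route from the paper's and leaves the decisive step open, as you yourself flag. The paper does not introduce the change-of-basis map $\Phi_{JJ'}$; it takes the bare coordinate reindexing $id\oplus P_{\chi'}\circ P_\chi^{-1}$, where $\chi,\chi'$ are the permutations of the construction sending $\{k+1,\ldots,d\}$ onto $J$ and $J'$. Being a permutation of coordinates, this map visibly carries $\C^I\times\T^J$ onto $\C^I\times\T^{J'}$, so your ``main obstacle'' never arises; the price of \emph{not} intertwining the maps $\overline{h}_\sigma$ is paid instead on the group side, which is precisely the content of the independence lemma of Section~\ref{calibrated_case} (the $\alpha$-equivariance of the identity under a change of $(\Bcal,\chi)$). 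The second ingredient, stated first in the paper's proof, is that $x\oplus y\in\ker(h_{\sigma\C}\oplus h_J)$ if and only if $x\in\ker(h_{\sigma\C})$ and $y=0$: the kernel sits inside $\C^I$ and is literally the same for all admissible $J$, so the reindexing matches the acting groups on the nose.

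Concerning your plan itself: you first postulate that $(\Phi_{JJ'},\alpha_{JJ'})$ preserves $\C^I\times\T^J$, then retract it and declare the real content to be that the $\Z^J$-ambiguity is absorbed by the action --- but you do not show it, and the mechanism you indicate is wrong. For $m\in\Z^J$, the discrepancy $\Phi_{JJ'}(0,m)=(\alpha m,\beta m)$ equals $\psi_{J'}^{-1}(h_J(m))$; since $h_J(m)\in\Gamma$, one has $(\alpha m,\beta m)\in\psi_{J'}^{-1}(\Gamma)=\varphi_{J'}(\Z^N)$, whose $\varphi_{J'}(\Z^d\oplus 0)=\Z^{\widetilde{I}}\oplus\Z^{J'}$ part dies under $E$ and whose remaining part is absorbed by the $\Z^{N-d}$-factor of the action. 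In particular this element lives in $\C^{\widetilde{I}}\oplus\C^{J'}$, the complement of $\ker(\overline{h}_\sigma^{J'})$, so attributing the absorption to the $E(\ker\overline{h}_\sigma^{J'})$-factor cannot be correct. Separately, ``identity on $\C^I$ and intertwines the $\overline{h}_\sigma$'s'' does not determine $\Phi_{JJ'}$ uniquely, since one may add any element of $\ker(\overline{h}_\sigma^{J'})$ to each $\Phi_{JJ'}(e_j)$; the uniqueness on which your cocycle argument rests requires the extra condition $\Phi_{JJ'}(\C^J)\subset\C^{\widetilde{I}}\oplus\C^{J'}$, which your $(\alpha,\beta)$-expansion uses implicitly but must be made explicit.
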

\begin{proof}
Let $J$ and $J'$ be two subsets of $[\![1,N]\!]$ of cardinal $d-k$ such that $\dim(\Vect(v_j, j \in J))=\dim(\Vect(v_j, j' \in J'))=d-k$ and $\chi$, $\chi'$ be the associated permutations (and $P_\chi$, $P_{\chi'}$ the associated linear maps). The morphism $P_{\chi'} \circ P_\chi^{-1}: \C^J \to \C^{J'}$ is an isomorphism and $id \oplus P_{\chi'} \circ P_\chi^{-1}: \C^I \oplus \C^J \to \C^I \oplus \C^{J'}$ too. 

We will prove that this morphism induces an isomorphism of stacks
\[[\C^I \times \T^J/\Z^{N-d} \times \ker(h_{\sigma\C} \oplus h_J)] \simeq [\C^I \times \T^{J'}/\Z^{N-d} \times \ker(h_{\sigma\C} \oplus h_{J'})] \]  
where $h_J$ (resp. $h_{J'}$) is the restriction of $h^{cal}_\C$ on $\Z^J$ (resp. $\Z^{J'}$)

Firstly, we can remark that $x \oplus y \in \ker(h_{\sigma\C} \oplus h_J)=\ker(h_{\sigma\C} \oplus h_{J'})$ if, and only if, $x \in \ker(h_{\sigma\C})$ and $y=0$. Thus, we get the following commutative diagram (every arrow is an isomorphism):

\[\begin{tikzcd}[ampersand replacement=\&]
	{\C^I \oplus \C^J/\ker(h_{\sigma\C} \oplus h_J)} \&\& {\C^I \oplus \C^{J'}/\ker(h_{\sigma\C} \oplus h_{J'})} \\
	{\C^I/\ker(h_{\sigma}) \oplus \C^J} \&\& {\C^I/\ker(h_{\sigma}) \oplus \C^{J'}}
	\arrow[from=1-1, to=2-1]
	\arrow["{id \oplus (P_{\chi'} \circ P_\chi^{-1})}", from=1-1, to=1-3]
	\arrow[from=2-1, to=2-3]
	\arrow[from=1-3, to=2-3]
\end{tikzcd}\]

By construction, the isomorphism $id \oplus P_{\chi'}^{-1} \circ P_\chi$ descends to quotient. Hence, we get:  
\[\begin{tikzcd}[ampersand replacement=\&]
	{\C^I \oplus \C^J} \& {\T^I \times\T^J} \& {\C^I \times\T^J} \& {[\C^I \times \T^J/\Z^{n-d}\times\ker(h_{\sigma\C}\oplus h_J)]} \\
	{\C^I \times \C^{J'}} \& {\T^I \times \T^{J'}} \& {\C^I \times \T^{J'}} \& {[\C^I \times \T^{J'}/\Z^{n-d}\times\ker(h_{\sigma\C}\oplus h_J)]}
	\arrow["{id \oplus P_{\chi'}P_\chi^{-1}}"', from=1-1, to=2-1]
	\arrow["E", from=1-1, to=1-2]
	\arrow["E"', from=2-1, to=2-2]
	\arrow[hook, from=1-2, to=1-3]
	\arrow[hook, from=2-2, to=2-3]
	\arrow[from=1-3, to=1-4]
	\arrow[from=2-3, to=2-4]
	\arrow["{\hscr_{J'J}}", from=1-4, to=2-4]
	\arrow[from=1-3, to=2-3]
	\arrow[from=1-2, to=2-2]
\end{tikzcd}\]
The rightmost morphism is the desired morphism, which is an isomorphism. 

If we take a third subset $J''$ (and a permutation $\chi''$) , we get two other isomorphisms $\hscr_{J''J}$ and $\hscr_{J''J'}$. The equality \[(id \oplus P_{\chi''} \circ P_{\chi'}^{-1}) \circ (id \oplus P_{\chi'} \circ P_\chi^{-1})=id \oplus P_{\chi''} \circ P_\chi^{-1} \] induces an equality between the stack (iso)morphisms
\[\hscr_{J''J'} \circ \hscr_{J'J}=\hscr_{J''J} \]
\end{proof}

The following proposition is proved by the same manner as proposition \ref{isom_non-simplicial}.
\begin{Prop}
If $\Gamma$ is discrete then \[U_\sigma=\C^I \times \T^J\sslash E\left(\ker\left(\overline{h}_{\sigma}\right)\right) \]
\end{Prop}

\begin{Ex}
We will describe a variant of example \ref{ex_max}: 

Let 
\[ \Gamma=\Z e_1+\Z e_2+\Z e_3+\Z (ae_1-be_2+ce_3)+\sum_{i=5}^N v_i \Z \subset \R^4
\]
be a subgroup of $\R^4$ such that $\Vect_\R(\Gamma)=\R^4$, $h \colon \Z^N \to \Gamma $ be the calibration of $\Gamma$ defined by $h(e_i)=e_i$ for $i=1,2,3$, $h(e_4)=ae_1-be_2+ce_3$ and $h(e_i)=v_i$ for $i \geq 5$, 
and $\sigma=\Cone(e_1,e_2,e_3,ae_1-be_2+ce_3)$. 

Note $k \in \{5,\ldots,N\}$ an integer such that $(e_1,e_2,e_3,v_k)$ is a basis of $\C^4$. The kernel of the morphism $\overline{h_\sigma} \colon \C^4 \oplus \C^{\{k\}} \to \C^4$ is $\C(-a,b,-c,1,0)$ i.e. $\ker(h_{\sigma\C}) \oplus 0$. Then, $\Uscr_\sigma^{cal}=[\C^4 \times \C^*/\Z^{N-3} \times E(\C(-a,b,-c,1,0))]$
\end{Ex}

We will conclude this subsection with the compatibility of the construction with the restriction to a face of a cone.
\begin{Prop} \label{souscone}
Let $\sigma=\sigma_I$ be a cone and let $\tau=\sigma_{I'}$ be a face of $\sigma$. Then we have an isomorphism 
\[
\Uscr^{cal}_\tau \simeq [\C^{I'} \times \T^{I \setminus I'} \times \T^J/\Z^{N-d} \times E(\ker(\overline{h}_{\sigma}))] \hookrightarrow \Uscr^{cal}_\sigma
\]
which restricts to a torus isomorphism 
\[[\T^{I'}\times \T^{J'}/\Z^{N-d} \times E(\ker(\overline{h}_{\tau}))] \simeq [\T^{I'} \times \T^{I \setminus I'} \times \T^J/\Z^{N-d} \times E(\ker(\overline{h}_{\sigma}))] \]
induced by the identity of $\Tscr^{cal}_{h,\Ical}$.
\end{Prop}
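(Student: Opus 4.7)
The plan is to split the claim into the open embedding into $\Uscr^{cal}_\sigma$ and the identification of the middle quotient stack with $\Uscr^{cal}_\tau$. For the open embedding, I would observe that $\C^{I'} \times \T^{I\setminus I'}$ is precisely the open affine toric subvariety $U_{\widetilde\tau}$ of $U_{\widetilde\sigma} = \C^I$ attached to the face $\widetilde\tau = \Cone(e_i, i \in I')$ of $\widetilde\sigma$. As a union of $\T^I$-orbits, it is stable under the action of $\Z^{N-d} \times E(\ker(\overline{h}_{\sigma\C}))$, which factors through $\T^I$ on the $\C^I$ factor via $E \circ \varphi_\sigma$ and $E$ on the kernel; multiplying by $\T^J$ preserves stability, so the open inclusion of total spaces descends to an open embedding of quotient stacks.

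For the identification with $\Uscr^{cal}_\tau$, I would invoke the independence of the construction of $\Uscr^{cal}_\tau$ on the completion $(v_j, j \in J')$ (the well-definedness proposition above) to choose $J' \subset (I\setminus I') \cup J$ with $|J'| = d - \dim\tau$ such that $\{v_i\}_{i \in I'} \cup \{v_j\}_{j \in J'}$ spans $\C^d$; this is possible because $\Vect(v_i, i \in I\setminus I') + \Vect(v_j, j \in J)$ projects onto a complement of $\Vect(\tau)$ in $\C^d$. Setting $R := ((I\setminus I') \cup J) \setminus J'$ yields a decomposition $\C^I \oplus \C^J = (\C^{I'} \oplus \C^{J'}) \oplus \C^R$, and the restriction identity $\overline{h}_{\sigma\C}|_{\C^{I'} \oplus \C^{J'}} = \overline{h}_{\tau\C}$ together with a dimension count produces the short exact sequence
$$0 \to \ker(\overline{h}_{\tau\C}) \to \ker(\overline{h}_{\sigma\C}) \to \C^R \to 0.$$
A splitting $s : \C^R \to \ker(\overline{h}_{\sigma\C})$ then exhibits $\ker(\overline{h}_{\sigma\C}) = \ker(\overline{h}_{\tau\C}) \oplus s(\C^R)$; since the composition of $E \circ s$ with the projection onto the $\T^R$ factor coincides with the surjection $E : \C^R \twoheadrightarrow \T^R$, the subgroup $E(s(\C^R))$ acts transitively on $\T^R$. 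Quotienting by it absorbs the $\T^R$ factor and leaves a residual action of $E(\ker(\overline{h}_{\tau\C}))$ on $\C^{I'} \times \T^{J'}$ matching the defining action of $\Uscr^{cal}_\tau$; the $\Z^{N-d}$-actions agree because $\varphi_\sigma(0 \oplus m)$ and the lift of $\varphi_\tau(0 \oplus m)$ through $\C^{I'} \oplus \C^{J'} \hookrightarrow \C^I \oplus \C^J$ both project to $h^{cal}(0 \oplus m) \in \Gamma$, hence differ by an element of $\ker(\overline{h}_{\sigma\C})$ absorbed by the kernel action.

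The torus statement follows by restricting the isomorphism to the open dense substacks obtained by replacing $\C^{I'}$ with $\T^{I'}$; via the identification of both sides with $\Tscr^{cal}_{h,\Ical}$ from the preceding proposition on presented quantum tori, the induced torus isomorphism reduces to $\mathrm{id}_{\Tscr^{cal}_{h,\Ical}}$ because the underlying calibration $h^{cal}$ and its set $\Ical$ of virtual generators are unchanged. The main obstacle is the bookkeeping required to show that the splitting-based construction yields a genuinely well-defined stack isomorphism: one must verify independence of the splitting $s$ (two splittings differ by a linear map $\C^R \to \ker(\overline{h}_{\tau\C})$, absorbed by the residual action) and compatibility with the cocycle data from the proof that $\Uscr^{cal}_\tau$ does not depend on the choice of $J'$, exactly as in the well-definedness proposition whose argument this mimics.
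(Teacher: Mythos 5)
Your route is genuinely different from the paper's. The paper chooses $J'=J\sqcup K$ compatibly with a decomposition $\C^{I\cup J}=\C^{\widetilde I}\oplus\ker(\overline h_{\sigma\C})\oplus\C^K$, and then simply writes down an explicit map $f\colon\C^{I'}\times\T^{J'}\to\C^{I'}\times\T^{I\setminus I'}\times\T^J$ on covering spaces that sends the $\ker(\overline h_{\tau\C})$-summand into $\ker(\overline h_{\sigma\C})$ and exponentiates the complementary directions; there is no ``absorption'' step because $f$ is equivariant by construction. You instead take $J'\subset(I\setminus I')\cup J$, derive the short exact sequence $0\to\ker(\overline h_{\tau\C})\to\ker(\overline h_{\sigma\C})\to\C^R\to 0$, split it, and then try to quotient out $\T^R$. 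Your open-embedding argument and the set-up of the exact sequence are fine.

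The gap is in the ``absorbing'' step. When you restrict to the slice $\{u=1\}\subset\C^{I'}\times\T^{J'}\times\T^R$, the residual group you must quotient by is the stabilizer of $1\in\T^R$ inside $\Z^{N-d}\times E(\ker\overline h_{\sigma\C})$, i.e.\ $\{(m,E(v)):\mathrm{proj}_R(\varphi_\sigma(0\oplus m)+v)\in\Z^R\}$. This is \emph{not} $\Z^{N-d}\times E(\ker\overline h_{\tau\C})$: it contains pairs with $\mathrm{proj}_R(\varphi_\sigma(0\oplus m)+v)$ a nonzero lattice point of $\Z^R$ (your $E\circ s\colon\C^R\to\T^{I\cup J}$ is not injective on $\Z^R$), and in the rational/discrete case these definitely occur, so $E(s(\C^R))$ does not act freely on $\T^R$ and $E(\ker\overline h_{\sigma\C})$ is not the direct product $E(\ker\overline h_{\tau\C})\times E(s(\C^R))$. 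To complete the argument you must exhibit an explicit group isomorphism between the actual stabilizer and $\Z^{N-d}\times E(\ker\overline h_{\tau\C})$ intertwining the two actions on $\C^{I'}\times\T^{J'}$; your remark that the two lifts of $h^{cal}(0\oplus m)$ ``differ by an element of $\ker(\overline h_{\sigma\C})$ absorbed by the kernel action'' is exactly the content of that isomorphism, but it is not constructed, and it also silently assumes $\chi_\sigma$ and $\chi_\tau$ agree on $\{d+1,\dots,N\}$ (the action of $\Z^{N-d}$ depends on the permutation, not just on $h^{cal}$). The paper's compatible choice of $J'$ and the explicit formula for $f$ are precisely what makes this bookkeeping disappear, which is why it is the cleaner route here.
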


\begin{proof}
%Without loss of generality, we can suppose that $J'=J \coprod K$ for $K$ a subset of $\{1,\ldots,n\}$ and $\C^{I \cup J}=\C^{\widetilde{I}} \oplus \ker(h_{\sigma\C}) \oplus \C^K$. \\Then, the map $f \colon\C^{I'} \times \T^{J'}=(\C^{\widetilde{I}} \oplus \ker(h_{\tau\C})) \times \T^{J'} \to (\C^{\widetilde{I}} \oplus \overline{E}_{I \setminus I'}(\ker(h_{\sigma\C})) \times \T^{J'}=\C^{I'} \times \T^{I \setminus I'} \times \T^J$ (where $\overline{E}_{I \setminus I'}: (z,w) \in \C^{I'} \times \C^{I \setminus I'} \mapsto (z,E(w)) \in \C^{I'} \times \T^{I \setminus I'}$) defined by
%\[
%f(x \oplus t,y)=(x,\overline{E}_{I \setminus I'}(t,0),z)
%\]
%descends to the desired stack isomorphism 
We have the following commutative diagram 

\[\begin{tikzcd}[ampersand replacement=\&]
 	{[\T^{I'}\times\T^{J'}/\Z^{n-d} \times E(\ker(\overline{h}_\tau))]} \&\& {[\T^{I'}\times\T^{J'}/\Z^{n-d} \times E(\ker(\overline{h}_\tau))]} \\
	\& {\Tscr_{h,\Ical}}
	\arrow["{(\overline{h}_\tau,P_\chi)}"', from=1-1, to=2-2]
	\arrow["\lscr^{cal}", from=1-1, to=1-3]
	\arrow["{(\overline{h}_\sigma,P_{\chi'})^{-1}}"', from=2-2, to=1-3]
\end{tikzcd}\]
Since $h_\sigma(e_i)=h_\tau(e_i)$ for $i \in I'$ (hence $[h_\sigma]^{-1} \circ [h_\tau]([e_i])=[e_i]$)  then $\lscr^{cal}$ can be extended to an isomorphism $\Uscr_\tau^{cal} \to [\C^{I'} \times \T^{I \setminus I'} \times \T^J/\Z^{N-d} \times E(\ker(\overline{h}_{\sigma}))]$
\end{proof}
\subsubsection{Toric morphisms of affine quantum toric varieties}
\label{affine_morphisms}
We will use the same definition of toric morphism as \citep{VQS}:
\begin{Def} \label{morphism_cal}
A toric morphism between the two affine quantum toric varieties $\Uscr^{cal}_{\sigma_1}=[\C^I \times \T^J/G_1]$ and $\Uscr^{cal}_{\sigma_2}=[\C^{I'} \times \T^{J'}/G_2]$ is a stack morphism $[\C^I \times \T^J/G_1] \to [\C^{I'} \times \T^{J'}/G_2]$ which restricts to a presented calibrated torus morphism $[\T^{I} \times \T^J/G_1] \to [\T^{I'} \times \T^{J'}/G_2]$
\end{Def}
By the proposition \ref{forgetful_isom} and the definition of torus morphism, such torus morphism induces a torus morphism between standard quantum tori $\Tscr^{cal}_{h,\Ical} \to \Tscr^{cal}_{h',\Ical'}$ and thus two linear maps $(L: \R^d \to \R^{d'},H: \R^N \to \R^{N'})$. Moreover, $L(\sigma) \subset \sigma'$ and $H(\widehat{\sigma}) \subset \widehat{\sigma}'$. These morphisms forms a calibrated quantum fans morphism. Conversely, we will discuss how we can associate to a morphism of calibrated quantum fans a toric morphism: 

Let $\sigma$ be a cone of dimension $k$ of $\R^d$, $\sigma'$ be a cone of dimension $k'$ of $\R^{d'}$,   $\widetilde{\sigma}$  and $\widetilde{\sigma}'$ be the associated cone in $\R^N$ and $\R^{N'}$. \\
Let $(L,H): (\sigma,h) \to (\sigma',h')$ be a calibrated quantum fan morphism. \\
By definition of calibrated quantum fans morphism, there exists a cone $\widetilde{\sigma}'=\widetilde{\sigma_{I'}}$ of $\Delta'_{h'}$ such that $H(\widetilde{\sigma}) \subset \widetilde{\sigma}'$. \\
We will adapt the construction of \citep{VQS} (section 5.1). Firstly, we begin by replacing the calibration $h$ by the morphism $\varphi$ (used in definition in $\Uscr_{\sigma}^{cal}$) in diagram \eqref{qf_diag}: \\
Let $J$ be a subset of cardinal $d-k$ of $\{1,\ldots,N\}$ such that \[\C^d=\Vect_\C(\sigma) \oplus \Vect_\C(v_j,j \in J).\] Let $\widetilde{J}$ be a subset of $J$ such that for all $j \in \widetilde{J}$, $L(v_j) \notin L(\Vect_\C(\sigma))$ and such that the family $(L(v_j), j \in \widetilde{J})$ is free. Let $J'$ be a subset of cardinal $d'-\dim(\sigma')$ of $\{1,\ldots,N\}$ containing $\widetilde{J}$  such that \[\C^{d'}=\Vect_\C(\sigma') \oplus \Vect_\C(v'_j,j \in J').\] Note $h_J: \C^J \to \C^d$ (resp. $h_{J'}: \C^{J'} \to \C^{d'}$)  the linear map $e_j \mapsto v_j$ for all $j \in J$ (resp. $e_j \mapsto v'_j$ for all $j \in J'$).   
To sum up, we have the following commutative diagram: 
\begin{equation} \label{morphism_cone}
\begin{tikzcd}[ampersand replacement=\&]
	{\C^I \oplus \C^J} \&\& {\C^d} \\
	{\C^{I'} \oplus \C^{J'}} \&\& {\C^{d'}}
	\arrow["{h_{\sigma\C}+h_J}", from=1-1, to=1-3]
	\arrow["L", from=1-3, to=2-3]
	\arrow["{h_{\sigma'\C}+h_{J'}}"', from=2-1, to=2-3]
	\arrow["{\widetilde{L}}"', from=1-1, to=2-1]
\end{tikzcd}
\end{equation}
where $\widetilde{L} $ is the map $\C^I \oplus \C^J =\ker(h_{\sigma\C}) \oplus (\C^{\widetilde{I}} \oplus \C^J) \to \ker(h'_{\sigma'\C}) \oplus (\C^{\widetilde{I}'} \oplus \C^{J'})$ defined by :
\[
\forall w \in \ker(h_{\sigma\C}), \forall z \in \C^{\widetilde{I}}\oplus \C^J, \widetilde{L}(w,z)=(H(w),\psi'^{-1}(L(h_{\sigma\C}(z))))
\]
where $\psi'$ is the map induced by $h'_{\sigma'}$ used in the definition of $\Uscr_\sigma^{cal}$. This map is well defined since $H(\ker(h_{\sigma\C})) \subset \ker(h'_{\sigma'\C})$ since $H(\widehat{\sigma}) \subset \widehat{\sigma}'$ and $(L,H)$ is a morphism of calibrated quantum fans.

Let $\chi$ and $\chi'$ be permutations used to define the toric varieties $\Uscr^{cal}_\sigma$ and $\Uscr^{cal}_{\sigma'}$ i.e. permutations such that 
\[\chi(\{1,\ldots,k\})=\widetilde{I},\chi(\{k+1,\ldots,d\})=J\]
and 
\[\chi'(\{1,\ldots,k'\})=\widetilde{I'},   \chi'(\{k'+1,\ldots,d'\})=J'.\]
Let $P_\chi \in \GL_N(\R)$ and $P_{\chi'} \in \GL_{N'}(\R)$ be the associated linear maps.

Moreover, we can extend this diagram with the morphisms $\varphi=\psi^{-1} \circ  h_\C \circ P_\chi \colon \C^N \to \C^{I} \oplus \C^J$ and $\varphi'=\psi'^{-1} \circ  h'_\C \circ P_{\chi'} \colon \C^{N'} \to \C^{I'} \oplus \C^{J'}$ used to define the quantum toric varieties $\Uscr_\sigma^{cal}$ and $\Uscr_{\sigma'}^{cal}$ (and more precisely, used to define an action of $\Z^{N-d}$ (resp. $\Z^{n'-d'}$) on $\C^I \oplus \C^J$  (resp. $\C^{I'} \oplus \C^{J'}$): 
\begin{equation} \label{morphism_cone_extended}
\begin{tikzcd}[ampersand replacement=\&]
	{\C^N} \&\& {\C^I \oplus \C^J} \&\& {\C^d} \\
	{\C^{N'}} \&\& {\C^{I'} \oplus \C^{J'}} \&\& {\C^{d'}}
	\arrow["{h_{\sigma\C}+h_J}", from=1-3, to=1-5]
	\arrow["L", from=1-5, to=2-5]
	\arrow["{h_{\sigma'\C}+h_{J'}}"', from=2-3, to=2-5]
	\arrow["{\widetilde{L}}"', from=1-3, to=2-3]
	\arrow["\varphi", from=1-1, to=1-3]
	\arrow["{\varphi'}"', from=2-1, to=2-3]
	\arrow["{P_{\chi'}^{-1}HP_\chi}"', from=1-1, to=2-1]
\end{tikzcd}
\end{equation}
After this replacement, we can follow the construction of corollary 5.6 of \citep{VQS} (i.e. the section 5.1) in order to associate to the morphisms $(H_{\chi\chi'}\coloneqq P_{\chi'}^{-1} H P_\chi,\widetilde{L})$ a toric morphism $\Uscr_{\sigma}^{cal} \to \Uscr_{\sigma'}^{cal}$: 

Note $E_I: \C^I \times \C^J \to \T^I \times \C^J$ the map defined by:
\[((z_i)_{i \in I},(w_j)_{j \in J}) \mapsto ((E(z_i))_{i \in I},(w_j)_{j \in J}) \]
and $\overline{E}_{J}: \C^I \times \C^J \to \C^I \times \T^J$ the map defined by: 
\[((z_i)_{i \in I},(w_j)_{j \in J}) \mapsto ((z_i)_{i \in I},(E(w_j))_{j \in J}) \]
In the same way, we define $E'_{I'},\overline{E}'_{J'}$ and $E'=E'_{I'}\circ \overline{E}'_{J'}$)
Then, we have the commutative diagram: 
\begin{equation} \label{L_extent}
\begin{tikzcd}[ampersand replacement=\&]
	{\C^I \oplus \C^J} \& {\T^I \times \C^J} \& {\C^I \times\C^J} \& {\C^I \times \T^J} \\
	{\C^{I'}\oplus \C^{J'}} \& {\T^{I'}\times \C^{J'}} \& {\C^{I'}\times \C^{J'}} \& {\C^{I'}\times \T^{J'}}
	\arrow["{E_I}", from=1-1, to=1-2]
	\arrow["{\widetilde{L}}"', from=1-1, to=2-1]
	\arrow["{E'_{I'}}"', from=2-1, to=2-2]
	\arrow[hook, from=1-2, to=1-3]
	\arrow["{\overline{E}_J}", from=1-3, to=1-4]
	\arrow[dashed, from=1-4, to=2-4]
	\arrow["{\overline{E}'_{J'}}"', from=2-3, to=2-4]
	\arrow["{\overline{L}}"', from=1-3, to=2-3]
	\arrow[hook, from=2-2, to=2-3]
	\arrow["{\overline{L}}"', from=1-2, to=2-2]
\end{tikzcd}
\end{equation}
The morphism $\widetilde{L}$ descends to $\T^I \times \C^J$ because $\widetilde{L}(\widetilde{\sigma}) \subset \widetilde{\sigma}'$ but since we do not make enough restrictions on $J$ and $J'$, the morphism $\overline{L}$ has no reason to descend to a morphism $\C^I \times \T^J \to \C^{I'}\times \T^{J'}$ (like the simplicial case).

Let $T \in \Afrak$ and 
\[\begin{tikzcd}[ampersand replacement=\&]
	{\widetilde{T}^{cal}} \& {\C^I \times \T^J} \\
	T
	\arrow[from=1-1, to=2-1]
	\arrow["m^{cal}", from=1-1, to=1-2]
\end{tikzcd}\]
be an object of $\Uscr^{cal}_\sigma$ over $T$.

Let $\widehat{T}^{cal}$ be the fibre product 
\[\widetilde{T}^{cal} {}_m\!\times_{\widetilde{E}_J} (\C^{I} \oplus \C^J)=\left\{\left(\widetilde{t},z\right) \in \widetilde{T}^{cal} \times (\C^I \oplus \C^J) \mid m^{cal}\left(\widetilde{t}\right)=\overline{E}_J(z)\right\}
 .\]
The group $\Z^{N-k} \times E_I(\ker(h_\sigma))$ acts on $\widehat{T}^{cal}$ by 
\[ (p,E_I(w_1,w_2)) \cdot (\widetilde{t},z_1,z_2)=((pr_k (p),E(w_1,w_2)) \cdot \widetilde{t},z_1+w_1,E(w_2)z_2)
\]
where $pr_k: \Z^{N-k}=\Z^{N-d} \oplus \Z^{d-k} \to \Z^{N-d}$ is the projection (same definition for $pr_{k'}: \Z^{N'-k'} \to \Z^{N'-d'}$).

The linear map $H_{\chi\chi'}$ satisfies, for $i \in \{1,\ldots,k\}$, 
\[ H_{\chi\chi'}(e_i) \in \Z^{k'} \oplus 0
\]              
Then, the map $H_{\chi\chi'
}$ is of the form $\begin{pmatrix} * & * \\ 0 & M \end{pmatrix} \in \Mcal_{N,N'} (\Z)$ (we do not care of the upper part since we apply the map $E_I$).

In consequence, we can define the $\Z^{N'-d'} \times E'(\ker(h_{\sigma'\C}))$-principal bundle $\widetilde{T'}^{cal}$ as
\[\widetilde{T'}^{cal}\coloneqq \widehat{T}^{cal} \times_{(pr_{k'} \circ M) \times \overline{E}'_J \circ \overline{L}} (\Z^{N'-d'} \times E'(\ker(h_{\sigma'\C})))
\]
with associated equivariant map
\[\widetilde{m}^{cal}: \left(\widetilde{t},z,(q,E'(w))\right) \mapsto \overline{E}'_{J}\left(\overline{L}(z)\right) \cdot E'(\varphi'(0 \oplus q)+w)
\]
Hence, 
\[\begin{tikzcd}[ampersand replacement=\&]
	{\widetilde{T'}^{cal}} \& {\C^{I'} \times \T^{J'}} \\
	T
	\arrow[from=1-1, to=2-1]
	\arrow["{\widetilde{m}^{cal}}", from=1-1, to=1-2]
\end{tikzcd}\]
is an object of $\Uscr^{cal}_{\sigma'}$ over $T$.

\begin{Lemma}
The left square and the right square of the following diagram are cartesian: 
\[\begin{tikzcd}[ampersand replacement=\&]
	{\widehat{T}^{cal}} \& {\C^I \times\C^J} \& {\C^{I'} \times\C^{J'}} \&\& {\widehat{T}^{cal}\times_{M\times\overline{L}}(\Z^{N'-d'} \times E'_{I'}(\ker(h_{\sigma'\C})))} \\
	{\widetilde{T}^{cal}} \& {\C^I \times\T^J} \& {\C^{I'} \times\T^{J'}} \&\& {\widetilde{T}'^{cal}} \\
	\& T
	\arrow[from=1-1, to=1-2]
	\arrow["{\overline{L}}", from=1-2, to=1-3]
	\arrow["g"', from=1-5, to=1-3]
	\arrow["f", from=1-5, to=2-5]
	\arrow["{\widetilde{m}^{cal}}"', from=2-5, to=2-3]
	\arrow["{\overline{E}'_{J'}}", from=1-3, to=2-3]
	\arrow["{\overline{E}_J}", from=1-2, to=2-2]
	\arrow[from=1-1, to=2-1]
	\arrow["m^{cal}", from=2-1, to=2-2]
	\arrow[dashed, from=2-2, to=2-3]
	\arrow[from=2-1, to=3-2]
	\arrow[from=2-5, to=3-2]
\end{tikzcd}\]
where 
\[
f(\widetilde{t},z,q,E'_{I'}(t))=(\widetilde{t},z,pr_{k'}(q),E'(t))
\]
and
\[
g(\widetilde{t},z,q,E'_I(t))=E'_I(\varphi'(0 \oplus q)+t) \cdot \overline{L}(z)
\]
\end{Lemma}
\begin{proof}
See proof of lemma 5.3 of \citep{VQS}
\end{proof}

We define the image of a morphism between objects of $\Uscr_\sigma^{cal}$ by the same way as \citep{VQS} in diagram 6.10.

We get a stack morphism $\lscr^{cal}: \Uscr_{\sigma}^{cal} \to \Uscr_{\sigma'}^{cal}$.
\begin{Thm}\; \label{equiv_cat_aff}
\begin{itemize}
\item The stack morphism $\lscr^{cal}$ is a toric morphism.
\item Let $\sigma \subset \R^d$, $\sigma' \subset \R^{d'}$, $\sigma'' \subset \R^{d''}$ be cones and $(L,H)$, $(L',H')$ be calibrated quantum fans morphisms
between the calibrated quantum fans associated to it. Note $\lscr, \lscr'$ the toric morphisms associated to it and $\lscr''$ the toric morphism associated to $(L' \circ L,H' \circ H)$. Then, 
\begin{equation}
\lscr''=\lscr' \circ \lscr
\end{equation} 
\item Let $\lscr^{cal}$ be a torus morphism between $\Tscr^{cal}_{h,\Ical}$ and $\Tscr^{cal}_{h',\Ical'}$ and let $(L,H)$ be the induced linear morphisms. Then, $\lscr^{cal}$ extends as a toric morphism $\Uscr_{\sigma}^{cal} \to \Uscr_{\sigma'}^{cal}$ if, and only if, $(L,H)$ is a morphism of calibrated quantum fans.
\end{itemize}
\end{Thm}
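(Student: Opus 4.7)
The plan is to verify the three claims in order, relying heavily on the affine construction just completed and on the corresponding results from \cite{VQS} for the simplicial case.

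For the first point, I would show that $\lscr^{cal}$ restricts to a presented calibrated torus morphism. The key observation is that if $T$ already lives in the torus part, i.e.\ the equivariant map $m^{cal}$ factors through $\T^I \times \T^J$, then the fibre product $\widehat{T}^{cal}$ and the associated bundle $\widetilde{T'}^{cal}$ automatically live in the torus parts $\T^{I'} \times \T^{J'}$ because $\overline{L}$, when composed with $E'_{I'} \circ \overline{E}'_{J'}$, sends $\T^I \times \T^J$ to $\T^{I'} \times \T^{J'}$. The 6-tuple of Definition \ref{morphism_presented_tori} is then read off from the data: $(\Lcal, \Hcal, \Scal)$ is the torus morphism $\Tscr^{cal}_{h,\Ical} \to \Tscr^{cal}_{h',\Ical'}$ obtained from the calibrated quantum fan morphism $(L,H)$ (which exists by the simplicial construction of \cite{VQS}), while $(\Lcal', \Hcal', \Scal')$ is $(\widetilde{L}, H_{\chi\chi'}, s)$ where $s$ is forced by the compatibility of the permutations $\chi,\chi'$ on the virtual generators. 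The diagrams of Definition \ref{morphism_presented_tori} then commute by construction of $\widetilde{L}$ and diagram \eqref{morphism_cone_extended}, and Lemma \ref{equiv_6uple} confirms that the resulting stack morphism on the torus matches the one induced by $(\Lcal, \Hcal, \Scal)$ on $\Tscr^{cal}_{h,\Ical}$.

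For the functoriality in the second point, I would track an object $(\widetilde{T}^{cal} \to T, m^{cal})$ through the two constructions $\lscr$ then $\lscr'$, and compare with the construction for $(L' \circ L, H' \circ H)$. The iterated fibre product $(\widehat{T}^{cal})' := \widetilde{T'}^{cal} \times_{m'^{cal}} (\C^{I''} \oplus \C^{J''})$ is canonically isomorphic, via the universal property of fibre products, to $\widetilde{T}^{cal} \times_{m^{cal}} (\C^{I''} \oplus \C^{J''})$ taken over $\overline{E}_J \circ \overline{L'} \circ \overline{L}$. The composition of the two linear maps $H_{\chi\chi'}$ and $H'_{\chi'\chi''}$ equals $H_{\chi\chi''}$ for the composite $H' \circ H$ (after choosing a compatible intermediate permutation), and similarly for $\widetilde{L}$, so the associated bundles agree. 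The only real work is to check that the identifications are compatible with the defining equivariant maps $\widetilde{m}^{cal}$, which follows from the commutativity of diagram \eqref{morphism_cone_extended} applied at each stage.

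For the third point, the ``only if'' direction is immediate: any toric morphism restricts to a presented calibrated torus morphism $[\T^I \times \T^J/G_1] \to [\T^{I'} \times \T^{J'}/G_2]$ by Definition \ref{morphism_cal}, and such a morphism induces a torus morphism of standard calibrated quantum tori via Lemma \ref{equiv_6uple}, hence linear maps $(L,H)$. Moreover the image conditions $L(\sigma) \subset \sigma'$ and $H(\widehat\sigma) \subset \widehat\sigma'$ are needed merely for the map $\widetilde{L}$ above to exist (equivalently, for $E'_{I'} \circ \overline{L}$ to send $\T^I \times \C^J$ into $\T^{I'} \times \C^{J'}$); if they failed, the construction of $\widetilde{T'}^{cal}$ would produce points outside $\C^{I'} \times \T^{J'}$. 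The conditions on virtual generators and $\Z$-linearity on the non-virtual indices come from the conditions on the underlying torus morphism that already appear in \cite{VQS}. The ``if'' direction is exactly the content of the first point, which produces the desired extension.

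The hard part is the first point, precisely because of the dotted arrow in diagram \eqref{L_extent}: unlike the simplicial case, $\overline{L}$ does not descend to a map $\C^I \times \T^J \to \C^{I'} \times \T^{J'}$, so the morphism $\lscr^{cal}$ really must be defined through the fibre-product formula and not pointwise. Verifying that this indirect construction restricts to the expected torus morphism (and not to something larger because of the extra quotient by $\ker(h_{\sigma\C})$) requires carefully identifying the $\Z^{N-d} \times E(\ker(h_{\sigma\C}))$-equivariance on both sides, which is where the replacement of $h$ by $\varphi$ in the construction pays off.
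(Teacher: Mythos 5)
The paper's own ``proof'' of this theorem is a one-line citation to the proofs of lemma 5.4, theorem 5.5 and theorem 6.2 of \cite{VQS}; it provides no argument. Your sketch is therefore a reconstruction of what those references would show, transplanted to the non-simplicial setting, and the overall strategy you describe --- tracking an object through the fibre-product/associated-bundle construction of section \ref{affine_morphisms}, using the commutative diagrams \eqref{morphism_cone_extended} and \eqref{L_extent}, and being careful about the extra continuous factor $E(\ker(h_{\sigma\C}))$ in the quotient --- is the right one. Your closing remark about the dotted arrow in \eqref{L_extent} and the role of replacing $h$ by $\varphi$ correctly identifies the feature that distinguishes this case from the simplicial one.

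The one place where the logic is not quite closed is the ``only if'' direction of the third bullet. You write that the cone conditions $L(\sigma) \subset \sigma'$ and $H(\widehat\sigma) \subset \widehat\sigma'$ ``are needed merely for the map $\widetilde{L}$ above to exist'', and that otherwise ``the construction of $\widetilde{T'}^{cal}$ would produce points outside $\C^{I'} \times \T^{J'}$''. But in this direction one is handed an arbitrary toric extension $\lscr^{cal}$; nothing yet guarantees that it is the one produced by the fibre-product construction. To close the gap you must either (i) argue that any toric extension of the given torus morphism is forced to coincide with the constructed one, because the torus part is a dense open orbit and equivariance determines the rest --- after which your observation about $\widetilde{L}$ applies --- or (ii) argue directly \`a la one-parameter subgroups: the limit of $\lambda^v$, $v\in\sigma$, exists in $\Uscr^{cal}_\sigma$, so by continuity and equivariance the limit of $\lambda^{L(v)}$ exists in $\Uscr^{cal}_{\sigma'}$, which forces $L(v)\in\sigma'$ and hence $L(\sigma)\subset\sigma'$ (and similarly for $H$). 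Either route is short, but the step is needed; as written your argument proves ``if'' again rather than ``only if''. The remaining details you flag in the second bullet (compatibility of the intermediate choices $J,\chi$, etc.) are genuine but are covered by the independence-of-choices lemmas already established in the paper, so deferring them there is reasonable.
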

 \begin{proof}
 
In \citep{VQS}, see the proof of lemma 5.4 for the second point and the proof of theorem 5.5 and theorem 6.2 for the first and third points.
 \end{proof}

The two first points tell us that we defined a functor between the full subcategory of the quantum fans given by a cone and the category of affine quantum toric varieties. This functor is an equivalence of category thanks to the third one.

Now, we can give the link between the construction of affine quantum variety of this paper and the construction of \citep{VQS}: 

\begin{Prop} \label{isom_simp_cal}
Suppose $\sigma$ simplicial. We have a toric isomorphism:
\[\Uscr_\sigma^{cal} \simeq Q^{cal}_{k,d,P_{\chi}^{-1}\circ \varphi}\] (in the same manner as paragraph 6.3 in \citep{VQS}).

\end{Prop}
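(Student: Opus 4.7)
The plan is to unwind the definitions of both sides and observe that the simplicial hypothesis forces the extra kernel factor appearing in our construction to vanish, reducing our quotient to the one considered in paragraph 6.3 of \cite{VQS}.

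First I would treat the case where $\sigma$ has maximal dimension $d$. Simpliciality of $\sigma$ means that the family $(v_{i_1},\ldots,v_{i_p})$ of primitive generators of its $1$-cones is $\R$-linearly independent, hence also $\C$-linearly independent. Consequently the map $h_{\sigma\C} \colon \C^I \to \C^d$ is injective, so $\ker(h_{\sigma\C}) = 0$. In particular $\widetilde{I}=I$, $\psi = h_{\sigma\C}$, and the group $E(\ker(h_{\sigma\C}))$ is trivial. The quotient stack defining $\Uscr_\sigma^{cal}$ therefore collapses to
\[
\Uscr_\sigma^{cal} = [\C^I / \Z^{N-d}],
\]
where $\Z^{N-d}$ acts via the restriction to $0 \oplus \Z^{N-d}$ of the morphism $\varphi = \psi^{-1} \circ h_\C \circ P_\chi$, that is, via $P_\chi^{-1} \circ \varphi$ after identifying the coordinates of $\C^I$ with the first $d$ coordinates of $\C^N$ through $P_\chi$.

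Next I would recall the definition of $Q^{cal}_{k,d,P_\chi^{-1}\circ\varphi}$ from paragraph 6.3 of \cite{VQS}: it is precisely the quotient stack of $\C^I$ (respectively $\C^I \times \T^J$ in the non-maximal case) by the $\Z^{N-d}$-action induced by a standard calibration, the role of the calibration being played here by $P_\chi^{-1} \circ \varphi$. Comparing the two descriptions coordinate by coordinate then yields a stack isomorphism. To handle a non-maximal cone $\sigma$ of dimension $k<d$, I would repeat the same argument for the extended map $\overline{h}_{\sigma\C} \colon \C^I \oplus \C^J \to \C^d$: simpliciality of $\sigma$ together with the choice of $J$ completing $(v_i)_{i\in I}$ to a basis of $\C^d$ guarantees that $\ker(\overline{h}_{\sigma\C}) = 0$, and the quotient becomes $[\C^I \times \T^J / \Z^{N-d}]$, which again matches the construction of \cite{VQS}.

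Finally, I would verify that the isomorphism obtained in this way is toric in the sense of Definition \ref{morphism_cal}: it is induced by the identity at the level of $\C^I \times \T^J$ and by $P_\chi$ on the acting groups, so by construction it restricts on the open torus strata to the presented calibrated torus isomorphism provided by Lemma \ref{isom_torus}. The main (mild) obstacle is purely bookkeeping: one must track carefully that the permutation $P_\chi$ and the decomposition $\C^N = \C^d \oplus \C^{N-d}$ used to define $\varphi$ on the one hand, and the labelling conventions of \cite{VQS} on the other, are consistent; once the dictionary $P_\chi^{-1}\circ\varphi \leftrightarrow \varphi_{\mathrm{VQS}}$ is fixed, the identification is tautological.
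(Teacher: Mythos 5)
Your proposal is correct, but it takes a genuinely different route from the paper. The paper never mentions that $\ker(h_{\sigma\C})=0$; instead it exhibits the pair $(\overline{h}_\sigma P_\chi, P_\chi)$ as an isomorphism of \emph{calibrated quantum fans} between the standard cone $C_{k,d}=\Cone(e_1,\ldots,e_k)$ with calibration $P_\chi^{-1}\varphi$ and the cone $\sigma$ with calibration $h$ (the key commutativity check being $\varphi=\overline{h}_{\sigma\C}^{-1}\circ h\circ P_\chi$), and then invokes Theorem~\ref{equiv_cat_aff} to upgrade the fan isomorphism to a toric isomorphism of stacks in one stroke — toricity in the sense of Definition~\ref{morphism_cal} comes for free. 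Your argument instead unwinds both quotient stacks and observes that simpliciality kills the factor $E(\ker(h_{\sigma\C}))$, so the quotient collapses to $[\C^I/\Z^{N-d}]$ (or $[\C^I\times\T^J/\Z^{N-d}]$ in the non-maximal case), and then matches this by hand against the definition of $Q^{cal}_{k,d,P_\chi^{-1}\varphi}$ in \cite{VQS}, followed by a separate verification of toricity. Both routes are valid; the paper's is shorter because it reuses the equivalence-of-categories machinery already proved, and it makes the ``only bookkeeping'' step rigorous by phrasing the coordinate matching as commutativity of a single diagram, whereas your approach is more explicit about \emph{why} the simplicial hypothesis matters — the vanishing of $\ker(h_{\sigma\C})$ — which is not stated in the paper's proof even though it underlies the claim that $\overline{h}_\sigma$ is an isomorphism. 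If you wanted to tighten your version to the paper's level, you would replace the ``coordinate by coordinate'' comparison and the final toricity check by the single observation that your map is induced by a morphism of calibrated quantum fans and then cite Theorem~\ref{equiv_cat_aff}.
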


\begin{proof}
If $\sigma$ is simplicial then $h_{\sigma\C}$ is a monomorphism and $\overline{h}_{\sigma}$ is an isomorphism. The morphism $(\overline{h}_{\sigma} P_\chi,P_\chi)$ is an isomorphism of calibrated quantum fans between the fan induced by the cone $C_{k,d}=\Cone(e_1,\ldots,e_k)$ and the calibration $P_{\chi}^{-1}\circ \varphi: \Z^N \to P_{\chi^{-1}}(\overline{h}_\sigma^{-1}(\Gamma))$ and the fan induced by $\sigma$ and the calibration $h$: \\ Firstly, the following diagram commute: 
\[\begin{tikzcd}[ampersand replacement=\&]
	{\C^N} \& {\C^I} \\
	{\C^N} \& {\C^d}
	\arrow["h"', from=2-1, to=2-2]
	\arrow["{P_\chi}"', from=1-1, to=2-1]
	\arrow["{P_\chi^{-1}\varphi}", from=1-1, to=1-2]
	\arrow["{\overline{h}_{\sigma\C}P_\chi}", from=1-2, to=2-2]
\end{tikzcd}\]
since $\varphi=\overline{h}_{\sigma\C}^{-1} \circ h \circ P_\chi$. The map $\overline{h}_{\sigma\C} P_\chi$ sends $C_{k,d}$ onto $\sigma$ since 
\[\overline{h}_{\sigma\C} P_\chi(e_i)=\overline{h}_{\sigma\C} (e_{\chi(i)})=v_{\chi(i)} \in \sigma \]
The other points are obvious.

Hence, it induces a toric isomorphism (thanks to theorem \ref{equiv_cat_aff})
\[
\Uscr_\sigma^{cal} \simeq Q^{cal}_{k,d,P_{\chi}^{-1}\circ \varphi}
\]
\end{proof}
\subsection{Quantum toric varieties}
\label{Glueing}

\subsubsection{Definition}
Let $\sigma=\sigma_I=\Cone(v_i, i \in I)$ and $\tau=\sigma_J=\Cone(v_j, j \in J)$ be two cones of $\Delta$ with a non-empty intersection. Note $\widetilde{\sigma}$(resp. $\widetilde{\tau}$) the associated cone (see \eqref{cone_Rr}) of $\R^I$ (resp. $\R^J$), $h_\sigma,h_\tau$ the associated group homomorphisms. In this subsection, we will see how to glue the quantum toric varieties associated to them.  

We will suppose that $\sigma$ and $\tau$ are of dimension $d$ but the following construction works for cones of any dimension.

Let $H_{\sigma\tau}: \C^{I \cup J} \to \C^d$ be the linear map such that ${H_{\sigma\tau}}_{|\C^I}=h_{\sigma\C}$ and ${H_{\sigma\tau}}_{|\C^J}=h_{\tau\C}$.
The identity morphism $\Tscr^{cal}_{d,\Ical} \to \Tscr^{cal}_{d,\Ical}$ induces torus isomorphisms \begin{equation} \label{isom_1}
[\T^{I}/\Z^{N-d} \times E(\ker(h_{\sigma\C}))] \simeq [\T^{I \cup J}/\Z^{N-d} \times E(\ker(H_{\sigma\tau\C}))]
\end{equation}

\begin{equation} \label{isom_2}
[\T^{I}/\Z^{N-d} \times \ker(h_{\tau\C})] \simeq [\T^{I \cup J}/\Z^{N-d} \times E(\ker(H_{\sigma\tau\C}))]
\end{equation}

Moreover, these morphisms extend to toric isomorphisms between $\Uscr_\sigma^{cal}=[\C^I/\Z^{N-d} \times E(\ker(h_{\sigma\C})]$ and $[\C^I \times \T^J/\Z^{N-d}\times E(\ker(H_{\sigma\tau}))]$ and between $\Uscr_\tau^{cal}=[\C^I/\Z^{N-d} \times E(\ker(h_{\tau\C}))]$ and $[\T^I \times \C^J/\Z^{N-d}\times E(\ker(H_{\sigma\tau}))]$ in the same way as proposition \ref{souscone}.

Note $K_I\coloneqq (I \cup J) \setminus I$ and $K_J\coloneqq (I \cup J) \setminus J$. \\
As the intersection $\sigma \cap \tau$ is non-empty, the intersection $\{0\} \times \widetilde{\tau} \cap \widetilde{\sigma} \times \{0\}$ in $\R^{I \cup J}$ is non-empty too. Hence, the classical theory gives us an open toric subvariety $\Sscr_{\sigma\tau}$ of $U_{\{0\} \times \widetilde{\tau}}$, $\Sscr_{\tau\sigma}$ an open toric subvariety of $U_{\widetilde{\sigma} \times \{0\}}$ and an toric isomorphism $\varphi: \Sscr_{\sigma\tau} \to \Sscr_{\tau\sigma}$. With some computations, we see that (with the decomposition $\C^{I \cup J}=\C^{K_J} \oplus \C^{I \cap J} \oplus \C^{K_I}$):
\begin{equation}U_{\widetilde{\sigma} \times \{0\}} =\C^I \times \T^{K_I}, U_{\{0\} \times \widetilde{\tau}}=\T^{K_J} \times \C^J\subset \C^{I \cup J}  \end{equation}
and
\begin{equation} \label{gluing_open}
\varphi=id \colon \Sscr_{\tau\sigma}=\T^{K_J} \times \C^{I \cap J} \times \T^{K_I} \to \Sscr_{\sigma\tau}=\T^{K_J} \times \C^{I \cap J} \times \T^{K_I} 
\end{equation}

The identity descends to a stack isomorphism (thanks to the linear isomorphism induced by the permutation $\chi'^{-1}\chi$ and the theorem \ref{equiv_cat_aff})
\[[\Sscr_{\tau\sigma}/\Z^{N-d} \times E(\ker(H_{\sigma\tau}))] \simeq [\Sscr_{\sigma\tau}/\Z^{N-d} \times E(\ker(H_{\sigma\tau}))] \] 
(since the toric varieties $\Sscr_{\sigma\tau}$, $\Sscr_{\tau\sigma}$ are preserved by the actions).

 Now, thanks to the equality in \eqref{gluing_open} and the isomorphisms \eqref{isom_1} and \eqref{isom_2}, we get a toric isomorphism $\gscr_{IJ}$ between $\Uscr_{\tau\sigma}^{cal}\coloneqq [\T^{K_J} \times \C^{I \cap J}/\Z^{N-d} \times E(\ker(h_{\sigma\C}))] \hookrightarrow [\C^I/\Z^{N-d} \times E(\ker(h_{\sigma\C})] $ and $\Uscr_{\sigma\tau}^{cal}\coloneqq [\C^{I \cap J} \times \T^{K_I}/\Z^{N-d} \times E(\ker(h_{\tau\C})] \hookrightarrow [\C^J/\Z^{N-d} \times E(\ker(h_{\tau\C})] $.

\begin{Rem}
This transitions maps verify a cocycle condition since the identity map does.
\end{Rem}

With the previous discussion, we can define quantum toric varieties: 
\begin{Def} \label{QTV_cal}
Let $T \in \Afrak$. An object of $\Xscr^{cal}_{\Delta,h^{cal},\Ical}$ over $T$ is a covering $(T_I\coloneqq T_{\sigma_I})$ of $T$ indexed by the set of maximal cones $\Ical_{max}$ together with an object 
\[\begin{tikzcd}[ampersand replacement=\&]
	{\widetilde{T}_I} \& {\C^I \times \T^K} \\
	{T_I}
	\arrow[from=1-1, to=2-1]
	\arrow["{m_I}", from=1-1, to=1-2]
\end{tikzcd}\]

of $[\C^I \times \T^K/\Z^{N-d} \times E(\ker(\overline{h}_{\sigma_I\C}))](T_I)$ for every $\sigma_I \in \Ical_{max}$, satisfying for any couple $(I,I')$ with non-empty intersection $J$

\[ \gscr_{II'}\begin{pmatrix}\begin{tikzcd}[ampersand replacement=\&]
	{\widetilde{T}_{I \supset J}} \& {\Sscr_{\sigma_I\sigma_{I'}}} \\
	{T_I}
	\arrow[from=1-1, to=2-1]
	\arrow["{m_I}", from=1-1, to=1-2]
\end{tikzcd}\end{pmatrix}=\begin{matrix}\begin{tikzcd}[ampersand replacement=\&]
	{\widetilde{T}_{I' \supset J}} \& {\Sscr_{\sigma_{I'}\sigma_{I}}} \\
	{T_{I'}}
	\arrow[from=1-1, to=2-1]
	\arrow["{m_{I'}}", from=1-1, to=1-2]
\end{tikzcd}\end{matrix}\]

where $\widetilde{T}_{I' \supset J}\coloneqq m_I^{-1}(\Sscr_{\sigma_I \sigma_{I'}})$ and $\widetilde{T}_{I' \supset J}\coloneqq m_{I'}^{-1}(\Sscr_{\sigma_{I'} \sigma_I})$ 

A morphism of $\Xscr_{\Delta,h^{cal},\Ical}$ over a toric morphism $T \to S$ is defined as in \citep{VQS} with the necessary modifications.

\end{Def}

\subsubsection{Toric morphisms}
A morphism of quantum toric varieties $\Xscr^{cal}_{\Delta,h^{cal},\Ical} \to \Xscr^{cal}_{\Delta',(h^{cal})',\Ical'} $  is a collection of compatible toric morphisms between their affine pieces. In other words,   

\begin{Def}
A morphism $\Xscr_{\Delta,h^{cal},\Ical}^{cal} \to \Xscr_{\Delta',\left(h^{cal}\right)',\Ical'}^{cal}$ of calibrated quantum toric varieties is a collection of toric morphisms $\lscr_{\sigma}\colon\Uscr_{\sigma}^{cal} \to \Uscr_{\sigma'}^{cal}$ for all maximal cones of $\Delta$ compatible with the gluing i.e. if the intersection of $\sigma$ and $\tau$ is not empty then the morphism $\lscr_\sigma$ (resp. $\lscr_{\tau}$) restricts to a morphism $\Uscr_{\tau\sigma}^{cal} \to \Uscr_{\tau'\sigma'
}^{cal}$ (resp. $\Uscr_{\sigma\tau}^{cal} \to \Uscr_{\sigma'\tau'}^{cal}$) and such that the following equality holds on $\Uscr_{\tau\sigma}^{cal}$
\begin{equation} \label{toric_morphism _equality}
\gscr'_{\sigma'\tau'} \lscr_{\sigma}=\lscr_{\tau} \gscr_{\sigma\tau}
\end{equation}
 for each cones $\sigma,\tau$ with non-empty intersection.                                                                                                       
\end{Def}

In paragraph \ref{affine_morphisms}, we proved a correspondence between the affine quantum toric varieties morphisms and the calibrated quantum morphisms between cones. We will complete it: 

Let $\Xscr_{\Delta,h^{cal},\Ical}^{cal} \to \Xscr_{\Delta',\left(h^{cal}\right)',\Ical'}^{cal}$ be a morphism of calibrated quantum toric varieties i.e. a collection of toric morphisms $\lscr_{\sigma}\colon\Uscr_{\sigma}^{cal} \to \Uscr_{\sigma'}^{cal}$ respecting the equalities \eqref{toric_morphism _equality}
Let $(L_{\sigma},H_{\sigma})$ be the linear maps associated to $\lscr_\sigma$, $(g_{\sigma\tau},k_{\sigma\tau})$ the linear isomorphisms associated to $\gscr_{\sigma\tau}$ and $(g_{\sigma'\tau'},k'_{\sigma'\tau'})$ the linear isomorphisms associated to $\gscr'_{\sigma'\tau'}$. Then, thanks to the second point of theorem \ref{equiv_cat_aff}, the equations \eqref{toric_morphism _equality} becomes:
\begin{equation}
g'_{\sigma'\tau'} \circ H_\sigma=H_{\tau}\circ g_{\sigma\tau} \text{ and } k'_{\sigma'\tau'} \circ L_\sigma=L_{\tau}\circ k_{\sigma\tau}
\end{equation}
Hence, we can glue these calibrated quantum fans morphisms into one between $(\Delta,h^{cal},\Ical) \to (\Delta',\left(h^{cal}\right)',\Ical')$. \\
Conversely, a calibrated quantum fans morphism $(\Delta,h^{cal},\Ical) \to (\Delta',\left(h^{cal}\right)',\Ical')$ defines toric morphism between affine calibrated quantum toric varieties verifying \eqref{toric_morphism _equality} i.e. defines a toric morphism $\Xscr_{\Delta,h^{cal},\Ical}^{cal} \to \Xscr_{\Delta',\left(h^{cal}\right)',\Ical'}^{cal
}$ \\
We proved our main theorem: 
\begin{Thm}
The correspondence $(\Delta,h^{cal},\Ical) \to \Xscr^{cal}_{\Delta,h^{cal},\Ical }$ is functorial and defines an equivalence of categories between the category of calibrated quantum fan and the category of calibrated quantum toric varieties.

\end{Thm}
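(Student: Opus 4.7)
The plan is to assemble the theorem from the ingredients already established. The construction $(\Delta,h^{cal},\Ical) \mapsto \Xscr^{cal}_{\Delta,h^{cal},\Ical}$ is functorial on morphisms by the recipe of the previous subsection: a morphism of calibrated quantum fans $(L,H)$ restricts, for each maximal cone $\sigma \in \Delta$, to data $(L,H_{\chi\chi'})$ to which paragraph \ref{affine_morphisms} and Theorem \ref{equiv_cat_aff} associate an affine toric morphism $\lscr_\sigma\colon \Uscr_\sigma^{cal} \to \Uscr_{\sigma'}^{cal}$. I would first check that these affine morphisms satisfy the gluing identity \eqref{toric_morphism _equality}; because the transition maps $\gscr_{\sigma\tau}$ are by construction induced by the identity of $\Tscr^{cal}_{h,\Ical}$ (see \eqref{glueing_open}), this compatibility reduces to the commutativity of diagram \eqref{morphism_cone_extended} on the overlap $\Uscr_{\tau\sigma}^{cal}$, which in turn follows from the fact that all the $\varphi$'s are pulled back from a single calibration $h^{cal}$ on $\Z^N$.

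Functoriality with respect to composition follows immediately from the second point of Theorem \ref{equiv_cat_aff}, applied maximal cone by maximal cone. Essential surjectivity is then automatic, since by Definition \ref{QTV_cal} every calibrated quantum toric variety is constructed from some calibrated quantum fan.

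For fullness and faithfulness, I would argue exactly as in the paragraph preceding the theorem statement. Given a morphism $\Xscr_{\Delta,h^{cal},\Ical}^{cal} \to \Xscr_{\Delta',(h^{cal})',\Ical'}^{cal}$ as in Definition \ref{QTV_cal}, I extract for each maximal $\sigma$ the pair $(L_\sigma,H_\sigma)$ of linear maps given by Theorem \ref{equiv_cat_aff}. The gluing condition \eqref{toric_morphism _equality}, rewritten at the level of linear maps through the same theorem, becomes $g'_{\sigma'\tau'}\circ H_\sigma = H_\tau \circ g_{\sigma\tau}$ and the analogous identity for $L_\sigma$; since the $g$'s come from identities on $\Z^N$ and $\R^d$, this forces $H_\sigma = H_\tau$ and $L_\sigma = L_\tau$ on every non-empty overlap, so all the pairs glue to a single pair $(L,H)$ defined on $\R^d$ and $\R^N$. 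That this pair is a morphism of calibrated quantum fans (conditions on $\sigma$, $\Ical$, $s$, etc.) is inherited from the corresponding conditions, verified cone by cone by Theorem \ref{equiv_cat_aff}. Conversely, the functor recovers this morphism by reversing the extraction, giving bijectivity on $\Hom$-sets.

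The main obstacle I anticipate is purely bookkeeping rather than conceptual: one must check that the extracted $(L_\sigma,H_\sigma)$ really do satisfy the compatibility with the permutations $\chi,\chi'$ and the subsets $J,J'$ chosen in the non-maximal cone construction, so that the collection glues to a single well-defined calibrated quantum fan morphism on the whole of $\Z^N$. This amounts to verifying that the ambiguity in the choices made in subsection \ref{cone_non_max} is absorbed in the cocycle already established there, and then combining with the maximal-cone analysis above. Once this is done, everything else is a formal consequence of Theorem \ref{equiv_cat_aff}.
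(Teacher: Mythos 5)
Your proof takes essentially the same approach as the paper's: extract the linear pair $(L_\sigma,H_\sigma)$ from each affine piece via Theorem \ref{equiv_cat_aff}, rewrite the gluing identity \eqref{toric_morphism _equality} at the level of linear maps, observe that the transition maps $\gscr_{\sigma\tau}$ are induced by the identity on $\Tscr^{cal}_{h,\Ical}$ so that the pairs agree and glue to a single global fan morphism, and reverse the construction for the converse. Your additional observations about essential surjectivity and about the $\chi$-bookkeeping being absorbed into the cocycle established in subsection \ref{cone_non_max} are correct and consistent with the paper's argument.
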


\renewcommand{\theThm}{\arabic{section}.\arabic{Thm}}
\section{GIT-like construction}
\label{GIT}
In this section, we will discuss the realization of a quantum toric variety as a global quotient stack.

We can build the gluing of quantum toric varieties in another way than the previous section: \\
Let $\Sscr$ be the gluing of the toric varieties $\C^I$ and $\C^J$ along the intersection $\{0\} \times \widetilde{\tau} \cap \widetilde{\sigma} \times \{0\}$. 
Then, we define the gluing of $\Uscr^{cal}_{\sigma}$ and $\Uscr^{cal}_{\tau}$ is the quotient stack $[\Sscr/\Z^{N-d} \times E(\ker(H_{\sigma\tau}))]$. Then, we deduce 

\begin{Thm} \label{incomplete_GIT}
Let $A$ be the set $A\coloneqq \bigcup_{\sigma_I \in \Delta} I$ (as defined in \ref{calib_q_fan_def}) . Let $\widetilde{A}$ a subset of $\{1,\ldots,N\} \setminus \Ical$ such that $(v_i,i \in \widetilde{A})$ is free and such that $\C^d=\Vect(v_i,i \in A) \oplus \Vect(v_i,i \in \widetilde{A})$
Let $h_A: \Z^{A \cup \widetilde{A}} \to \Gamma$ be  the group homomorphism defined by $h(e_i)=v_i$ for $i \in A \cup \widetilde{A}  $. Note $\Sscr_A$ the toric variety given by the associated fan of $\Delta$ in $\R^A$ given by
\[ \Delta_h \cap \R^A=\{ \tau\mid \exists \sigma \in \Delta, \tau \preceq \widehat{\sigma} \cap \R^A \}.\]
We define an action of $\Z^{N-d} \times E(\ker(h_A))$ on $\Sscr_A\times \T^{\widetilde{A}}$ in the same way as \ref{cone_non_max}. Then, 
\[\Xscr^{cal}_{\Delta,h^{cal},\Ical} \simeq [\Sscr_A \times \T^{\widetilde{A}}/\Z^{N-d} \times E(\ker(h_{A\C}))]\]
as stacks
\end{Thm}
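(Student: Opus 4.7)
The plan is to produce the stated isomorphism by building compatible isomorphisms on a suitable open cover of $\Sscr_A \times \T^{\widetilde{A}}$ and then invoking the descent description of $\Xscr^{cal}_{\Delta,h^{cal},\Ical}$ given in Definition \ref{QTV_cal}.

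First I would use the classical structure of $\Sscr_A$: by construction it is the toric variety attached to the fan $\Delta_h \cap \R^A$ in $\R^A$, so it has the standard affine cover indexed by the maximal cones $\sigma_I \in \Delta$, with affine piece $U_{\widehat{\sigma_I}} \cong \C^I \times \T^{A\setminus I}$ inside $\C^A$. Multiplying by $\T^{\widetilde{A}}$ gives an open cover of $\Sscr_A \times \T^{\widetilde{A}}$ by pieces $V_I := \C^I \times \T^{A\setminus I} \times \T^{\widetilde{A}}$. Because $\widetilde{A}$ was chosen so that $(v_i, i\in\widetilde{A})$ is free and $\C^d = \Vect(v_i,i\in A) \oplus \Vect(v_i,i\in\widetilde{A})$, the kernel $\ker(h_A)$ sits inside $\C^A \oplus 0$; in particular each $V_I$ is preserved by the action of $\Z^{N-d} \times E(\ker(h_A))$.

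The main step is to produce, for each maximal $\sigma_I$, a toric isomorphism
\[
[V_I \,/\, \Z^{N-d} \times E(\ker(h_A))] \simeq \Uscr^{cal}_{\sigma_I}.
\]
This is the non-simplicial analog of Proposition \ref{souscone}: the extra coordinates from $A\setminus I$ and $\widetilde{A}$ are already torus factors, so the exponential-sequence trick of Section \ref{Quantum_tori} lets us absorb the corresponding free directions of $\ker(h_A)$ into the quotient. Concretely I would split $\C^{A\cup\widetilde{A}} = \C^I \oplus \C^{A\setminus I} \oplus \C^{\widetilde{A}}$, write the calibration $\varphi_A$ attached to $h_A$ in block form, and check that after quotienting the $\C^{A\setminus I}\oplus\C^{\widetilde{A}}$ part by its lattice one recovers precisely the $(\Z^{N-d} \times E(\ker(h_{\sigma\C})))$-action on $\C^I$ from Section \ref{calibrated_case}. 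The inclusion $\ker(h_{\sigma\C}) \hookrightarrow \ker(h_A)$ and the exactness of $0 \to \Z \to \C \xrightarrow{E} \C^* \to 0$ ensure the kernels match. I expect this bookkeeping — tracking how the virtual generators, the permutation $\chi$, and the complement $\widetilde{A}$ interact with the block decomposition of $\varphi_A$ — to be the main technical obstacle, although it is essentially a direct extension of Propositions \ref{souscone} and \ref{isom_non-simplicial}.

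Next I would verify gluing. On the overlap of two pieces $V_I$ and $V_{I'}$ inside $\Sscr_A \times \T^{\widetilde{A}}$, the ambient identity map restricts to the classical toric isomorphism $\Sscr_{\sigma_I\sigma_{I'}} \cong \Sscr_{\sigma_{I'}\sigma_I}$ of \eqref{glueing_open}, and therefore the induced map on quotient stacks is exactly the transition $\gscr_{II'}$ constructed in Section \ref{Glueing}. The cocycle condition is immediate since everything descends from the identity on $\Sscr_A \times \T^{\widetilde{A}}$.

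Finally, combining the three steps and comparing with Definition \ref{QTV_cal}, an object of $[\Sscr_A\times\T^{\widetilde{A}} / \Z^{N-d} \times E(\ker(h_A))]$ over $T\in\Afrak$ pulls back along the cover $(V_I)$ to a family $(T_I)$ of opens of $T$ together with objects of $\Uscr^{cal}_{\sigma_I}(T_I)$ whose transitions are exactly the $\gscr_{II'}$, which is precisely the data of an object of $\Xscr^{cal}_{\Delta,h^{cal},\Ical}(T)$; functoriality in $T$ is clear, so the assignment is an equivalence of stacks, which is the content of the theorem.
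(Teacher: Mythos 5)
Your proof is correct and follows exactly the strategy the paper only gestures at: the paper remarks that one can first glue the ambient toric varieties $\C^I$ into $\Sscr_A$ (it spells this out for two cones and then says ``we deduce'' the general statement), and your affine cover by $V_I=\C^I\times\T^{A\setminus I}\times\T^{\widetilde{A}}$, the piecewise identification with $\Uscr^{cal}_{\sigma_I}$ via the analog of Proposition~\ref{souscone}, and the check that the transitions recover the $\gscr_{II'}$ of Definition~\ref{QTV_cal} is precisely the bookkeeping the paper leaves implicit. Nothing is missing beyond details the paper itself omits.
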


Moreover, in the same way as in the beginning of subsection \ref{Glueing}, we have a toric isomorphism
\[
[\C^I \times \T^J/\Z^{N-d} \times \ker(h_{\sigma \C})] \simeq [\C^I \times \T^{I^c}/\Z^{N-d} \times \ker(h^{cal}_{\C})] 
\]
Hence, we get a GIT-like realization of $\Xscr^{cal}_{\Delta,h^{cal},\Ical}$:
\begin{Thm} \label{GIT_noGale}
Let $\Sscr$ be the toric variety associated to the associated fan $\Delta_h$. Then, we have a stack isomorphism 
\begin{equation} \label{isom_GIT_noGale}
    \Xscr^{cal}_{\Delta,h^{cal},\Ical} \simeq [\Sscr/\Z^{N-d} \times E(\ker(h^{cal}_\C))]
\end{equation}
which restricts to a torus isomorphism between the associated quantum torus on each affine chart.
\end{Thm}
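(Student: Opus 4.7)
The plan is to upgrade Theorem \ref{incomplete_GIT} by replacing its ambient space $\Sscr_A \times \T^{\widetilde{A}}$ with the full toric variety $\Sscr$, at the cost of enlarging the discrete part of the acting group from $\ker h_A$ to $\ker h^{cal}_\C$. I will build the isomorphism chart by chart on $\Sscr$ and then check compatibility with the two systems of gluing data.

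Fix a maximal cone $\sigma = \sigma_I \in \Delta$. By the constructions of Sections \ref{calibrated_case} and \ref{cone_non_max}, one has $\Uscr^{cal}_\sigma = [\C^I \times \T^J / \Z^{N-d} \times E(\ker \overline{h}_{\sigma\C})]$ for a suitable completion set $J$, and the toric isomorphism displayed immediately before the statement rewrites this as
\[
\Uscr^{cal}_\sigma \simeq [\C^I \times \T^{I^c} / \Z^{N-d} \times E(\ker h^{cal}_\C)],
\]
with $I^c = \{1,\dots,N\} \setminus I$. But the affine chart of $\Sscr$ attached to the cone $\Cone(e_i, i \in I) \in \Delta_h$ is precisely $\C^I \times \T^{I^c}$ (since $\Cone(e_i, i \in I)^\vee \cap \Z^N = \N^I \oplus \Z^{I^c}$). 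So on each affine chart of $\Sscr$ the right-hand quotient of \eqref{isom_GIT_noGale} coincides with $\Uscr^{cal}_\sigma$.

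To globalize, I would check that the local isomorphisms commute with the two systems of transition maps. On the $\Sscr$ side, the transition between the charts of $\sigma_I$ and $\sigma_{I'}$ is the identity on the overlap, since it is the toric transition map coming from the fan $\Delta_h$; on the $\Xscr^{cal}$ side the transitions $\gscr_{II'}$ of subsection \ref{Glueing} are also induced by the identity by construction (see \eqref{glueing_open}). Combined with Lemma \ref{equiv_6uple}, which identifies the presented quantum tori attached to each chart, the local isomorphisms assemble into a global stack isomorphism. The restriction to a torus isomorphism between the associated quantum tori on each chart then follows from case (2) of the exponential discussion in Section \ref{Quantum_tori}, applied to the torus chart $\T^N$ of $\Sscr$.

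The technical heart of the argument lies not in any single chart but in verifying the coherence across charts of the kernel enlargement $\ker \overline{h}_{\sigma\C} \hookrightarrow \ker h^{cal}_\C$: the change-of-calibration data $(\varphi, \chi)$ attached to $\sigma$ depends on $\sigma$, whereas $\ker h^{cal}_\C$ is global. However, Lemma \ref{equiv_6uple} and the cocycle argument of Proposition \ref{souscone} together package exactly this compatibility, so the argument should reduce to a formal bookkeeping check once those statements are invoked.
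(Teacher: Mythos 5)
Your proof is correct and takes essentially the same route as the paper: identify each affine piece $\Uscr^{cal}_\sigma$ with the corresponding chart $[\C^I\times\T^{I^c}/\Z^{N-d}\times E(\ker h^{cal}_\C)]$ of the right-hand side (the paper records this local toric isomorphism in the lines just before the theorem, obtained ``in the same way as in the beginning of subsection \ref{Glueing}''), and then observe that both gluing systems come from the identity on the torus, so the local isomorphisms assemble globally. Your explicit appeal to Lemma \ref{equiv_6uple} and Proposition \ref{souscone} for the cross-chart coherence of the calibration data, and to case (2) of the exponential discussion for the torus part, fills in precisely what the paper leaves implicit.
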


By contrast to the simplicial case, we cannot realize $\Xscr^{cal}_{\Delta,h,\Ical}$ as a quotient of a toric variety by an action of $\C^{N-d}$ via Gale transform (the "quantum GIT" of \citep{VQS}).

Indeed, let $k: \C^{N-d} \to \C^N$ be the map induced by a Gale transform (see \citep{CP} for the details) of the family $(h^{cal}(e_i), i \in [\![1,N]\!])$ i.e. a map such that the sequence 
\[\begin{tikzcd}[ampersand replacement=\&]
	0 \& {\C^{N-d}} \& {\C^N} \& {\C^d} \& 0
	\arrow[from=1-1, to=1-2]
	\arrow["k", from=1-2, to=1-3]
	\arrow["{h_\C^{cal}}", from=1-3, to=1-4]
	\arrow[from=1-4, to=1-5]
\end{tikzcd}\]
is exact. 

We would like to prove that $\Xscr^{cal}_{\Delta,h^{cal},\Ical}$ and $[\Sscr/\C^{N-d}]$ are isomorphic (where $\C^{N-d}$ acts on $\Sscr$ through $E \circ k$) in order to generalize the theorem 7.6 of \citep{VQS}). 

For each $\sigma \in \Delta$, the corresponding open substack of $\Xscr^{cal}_{\Delta,h^{cal},\Ical}$ is $\Uscr_\sigma^{cal}=[\C^I \times \T^J/\Z^{N-d} \times E(\ker(h_{\sigma\C}))]$ and the substack of $[\Sscr/\C^{N-d}]$ is $\Uscr'_\sigma\coloneqq [\C^I \times \T^{I^c}/\C^{N-d}]$ where $I^c=\{1,\ldots,N\} \setminus I$. 
\begin{Prop} \label{noGale} 
The stacks $\Uscr_{\sigma}^{cal}$ and $\Uscr'_\sigma$ are not isomorphic if $\sigma$ is not simplicial.
\end{Prop}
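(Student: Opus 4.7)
The plan is to derive a contradiction from an assumed isomorphism $\Uscr_\sigma^{cal} \simeq \Uscr'_\sigma$ by exhibiting a point at which the isotropy Lie groups on the two sides have different $\pi_0$-ranks. Using theorem \ref{GIT_noGale} and the toric isomorphism noted just above it, both stacks can be presented on the common underlying space $\C^I \times \T^{I^c}$:
\[
\Uscr_\sigma^{cal} = [\C^I \times \T^{I^c}/\Z^{N-d} \times E(\ker h^{cal}_\C)]
\quad\text{and}\quad
\Uscr'_\sigma = [\C^I \times \T^{I^c}/\C^{N-d}],
\]
where the first group acts via $(m,u)\cdot z = E(\varphi(0 \oplus m))\,u\,z$ and $\C^{N-d}$ via $w\cdot z = E(k(w))\,z$. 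Any stack isomorphism would then induce isomorphisms between the corresponding isotropy Lie groups at points of the coarse moduli.

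I would then compute the stabilizer at $x := (0,w_0) \in \C^I \times \T^{I^c}$ with $0 \in \C^I$ the origin and $w_0 \in \T^{I^c}$ arbitrary. The key structural fact is that by construction $\varphi$ factors through $\C^I \subset \C^N$, so the $\Z^{N-d}$-factor acts trivially on the $\T^{I^c}$-coordinate, while $0 \in \C^I$ is automatically fixed by all of $\T^I$. This yields
\[
\mathrm{Stab}_{\Uscr_\sigma^{cal}}(x) = \Z^{N-d} \times \{u \in E(\ker h^{cal}_\C) : u|_{I^c} = 1\},
\]
containing $\Z^{N-d}$ as a direct summand, so its $\pi_0$ has $\Z$-rank at least $N-d$. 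On the other side $\mathrm{Stab}_{\Uscr'_\sigma}(x) = k^{-1}(K)$ for $K := \ker h^{cal}_\C \cap (\C^I \oplus \Z^{I^c})$; since $k : \C^{N-d} \to \ker h^{cal}_\C$ is a linear isomorphism, this stabilizer is abstractly isomorphic (as a Lie group) to $K$, whose identity component is $\ker h_{\sigma\C}$ and whose component group embeds into $\Z^{I^c}$, so its $\pi_0$ has $\Z$-rank at most $|I^c| = N - |I|$.

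To conclude I would carry out the rank comparison using the two short exact sequences
\[
0 \to \ker h_\sigma \to \ker h^{cal} \to \pi(\ker h^{cal}) \to 0,
\qquad
0 \to \ker h_{\sigma\C} \to K \to \pi(K) \to 0
\]
(with $\pi$ the projection onto the $I^c$-coordinates). A short calculation then reduces the difference of the two $\pi_0$-ranks to $(|I| - \mathrm{rk}(\im h_\sigma)) + (\mathrm{rk}(\Gamma) - d)$. Both summands are non-negative, and non-simpliciality of $\sigma$ forces at least one to be strictly positive: in the lattice regime $\mathrm{rk}(\Gamma) = d$, so the second summand vanishes, but $\mathrm{rk}(\im h_\sigma) = \dim\sigma < |I|$ makes the first positive; in the irrational regime the second summand $\mathrm{rk}(\Gamma) - d$ is already positive. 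The main obstacle is the careful bookkeeping of these exact sequences together with the justification that the rank of $\pi_0$ of the isotropy group at a point is genuinely a stack-isomorphism invariant; once these are granted, the rank mismatch contradicts the existence of the assumed isomorphism.
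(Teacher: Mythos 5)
Your overall strategy — compare isotropy groups and conclude the stacks cannot be isomorphic — is the same as the paper's, but the execution has a real gap and you take a different (and more fragile) route through a common presentation.

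The gap: you compute the isotropy group at the single coordinate point $x=(0,w_0)$ on both sides and compare. A stack isomorphism $\Uscr_\sigma^{cal}\simeq\Uscr'_\sigma$ only guarantees an isomorphism of isotropy groups over matched points of the coarse space; it does not fix coordinates, so the orbit of $(0,w_0)$ on the calibrated side need not correspond to the orbit of $(0,w_0)$ on the Gale side. To derive a contradiction you would need either (a) an intrinsic characterization of that orbit (e.g.\ that it carries the unique maximal isotropy, so any isomorphism has to match it), or (b) the stronger statement that \emph{no} orbit on the Gale side has an isotropy group isomorphic to the one you computed on the calibrated side. The paper does exactly (b): it lists \emph{all} stabilizer groups for the $\C^{N-d}$-action (one for each coordinate stratum $K\subsetneq I$) and shows that none of them can be isomorphic to the special isotropy on the calibrated side. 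Your proposal does not perform this quantification over orbits.

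On the route itself: the paper argues directly on the native presentations $[\C^I\times\T^J/\Z^{N-d}\times E(\ker h_{\sigma\C})]$ and $[\C^I\times\T^{I^c}/\C^{N-d}]$, and the discriminating structure is that the calibrated isotropy contains a genuine multiplicative-torus factor $E(\ker h_{\sigma\C})$, which (when nonzero, i.e.\ $\sigma$ non-simplicial in the rational regime) cannot sit inside $\C^{N-d}$ — the ``$\C^n$ versus $(\C^*)^n\times\Z^n$'' heuristic. You instead pass through the $[\C^I\times\T^{I^c}/\Z^{N-d}\times E(\ker h^{cal}_\C)]$ presentation from Theorem \ref{GIT_noGale} and discriminate by $\pi_0$-rank; that is a different invariant, and it introduces a red flag: your derived rank-gap $(|I|-\mathrm{rk}\,\im h_\sigma)+(\mathrm{rk}\,\Gamma-d)$ is strictly positive for every irrational $\Gamma$, even when $\sigma$ is simplicial. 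Either your identification of the stabilizer in the GIT-like presentation with the stabilizer in the original affine presentation is off (they need not agree coordinate-by-coordinate — only orbit-by-orbit), or the argument proves more than the proposition asserts; either way, this must be reconciled before the computation can be trusted, and the cleanest fix is to avoid the change of presentation altogether and argue as the paper does, comparing the specified calibrated stabilizer against the full list of Gale-side stabilizers in their native models.
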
 

\begin{proof}
The groupoid associated to $\Uscr_\sigma^{cal}$ and the groupoid associated to $\Uscr'_\sigma$ cannot be Morita-equivalent since their isotropy groups (i.e. the stabilizer on a point by the action) are not isomorphic (see theorem 4.4 of \citep{Morita}).

More precisely, the stabilizer of the action of $\Z^{N-d} \times E(\ker(h_{\sigma\C}))$ at each point of $(\C^{\widetilde{I}} \oplus 0) \times \T^J$ is $E(\ker(h_{\sigma\C})) \times \ker(h^{cal})$ and there is no point of $\C^I \times \T^{I^c}$ with an isomorphic stabilizer group for the $\C^{N-d}$-action. Indeed, the different stabilizer groups for the $\C^{N-d}$-action are $k^{-1}(\ker(h_{\sigma\C})\cap \C^K+\ker(h^{cal}))$ at $\C^{\widetilde{I}} \oplus (\ker(h_{\sigma\C})  \cap \{0\}^{K}) \times \T^{I^c}$ for $\emptyset \neq K \varsubsetneq I$ and $k^{-1}(\ker(h_{\sigma\C})+\ker(h^{cal}))$ at $(\C^{\widetilde{I}} \oplus \{0\}) \times \T^{I^c}$. These groups cannot be isomorphic to $E(\ker(h_\sigma)) \times \ker(h^{cal})$(think of the lack of isomorphism between $\C^n$ and $(\C^*)^n \times \Z^n$ due to the torsion).
\end{proof}

%Conversely, if we replace the construction of quantum toric varieties by the quotient stack of $\Sscr$ by the $\C^{N-d}$-action, the construction would not be functorial. Indeed, its "torus" is the quotient stack $[\T^N/\C^{N-d}]$ which is not isomorphic to the calibrated quantum torus $\Tscr^{cal}_{h,\Ical}$ (for the same reason as the proof of \ref{noGale}) and hence, this torus is not entirely characterized by the calibration (up to isomorphism). Therefore, we cannot associate a stack morphism to a calibrated quantum fan morphism $(L \colon \R^d \to \R^{d'}, H\colon \R^N \to \R^{N'})$.

\begin{Ex}
Let $\varepsilon \in \R_{>0}$. Let $v=\left(-\frac{1}{\varepsilon},\frac{2+2\varepsilon}{\varepsilon},-\frac{2}{\varepsilon}\right)$ be a vector of $\R^3$, $\Gamma=\Z^3+\Z v$ be a subgroup of $\R^3$, $h^{cal}: \Z^6 \to \Gamma$ be a standard calibration of $\Gamma$ such that $h^{cal}(e_4)=-e_1$, $h^{cal}(e_5)=-e_2$, $h^{cal}(e_6)=v$ and $\Delta$ the fan of $\R^3$ whose maximal fans are 
\begin{align*}
    \Delta_{max}=&\left\{\Cone(e_1,\pm e_2,e_3),\Cone(-e_1,-e_2,e_3),\right.\\
    &\left.\Cone(e_1,\pm e_2,v),\Cone(-e_1,-e_2,v),\Cone(-e_1,e_2,e_3,v) 
    \right\}
\end{align*}

This fan is the normal fan associated to (a deformation of) the non-simple slice of the flop of type (2,2) of the cube (see figure 5 of \citep{bosio2006} for the description of the flop).
\begin{figure}[h]
\centering
\caption{The polytope associated to the fan $\Delta$}
\begin{tikzpicture}[scale=0.7]
    \draw (1,-2) to (1,1) to (0,2)node[right]{$e_3$} to (-1,1)  to (0,0.25);
    \draw[dashed] (-1,1) node[left]{$-e_2$}to (-0.125,1.5)  to (1,1) node[right]{$e_2$} ;
    \draw[dashed] (0,2) to (-0.125,1.5) to (1,-2) ; 
    \draw (0,2) to (0,0.25) to (1,-2) node[right]{$v$}  to (-1,1);
\end{tikzpicture}
\end{figure}
There are six simplicial maximal cones and one maximal non-simplicial cone in $\Delta$. We associate to each of them an affine quantum toric variety (like \ref{ex_max}) and define how to glue them.
Let's see how to describe it globally: \\
The toric variety $\Sscr$ associated to $\Delta_{h^{cal}}$ is
\[
\Sscr=(\C^2 \setminus \{0\})^3 \setminus [\C^* \times \C^3 \times(\C^*)^2 \cup \C^* \times \C \times \C^* \times \C \times \C^* \times \C] \cup (\C^* \times \C^3 \times \C^* \times \C )
\]
and the kernel of $h_\C^{cal}$ is 
\[
\ker(h_\C^{cal})=\C(1,0,0,1,0,0) \oplus \C(0,1,0,0,1,0) \oplus \C\left(-\frac{1}{\varepsilon},\frac{2+2\varepsilon}{\varepsilon},-\frac{2}{\varepsilon},0,0,-1\right)
\]
The quantum toric variety $\Xscr_{\Delta,h^{cal},\emptyset}$ is isomorphic to the quotient stack $[\Sscr/\Z^{n-d} \times E(\ker(h^{cal}))]$ where $\Z^{n-d} \times E(\ker(h^{cal}))$ acts on $\Sscr$ through
\[
(p,E(t))\cdot z=E(h(p)+t)z
\]\end{Ex}
\renewcommand{\theThm}{\arabic{section}.\arabic{Thm}}
\section{Forgetting calibration and gerbe structure}
\label{forget_cal}
We can associate to each affine quantum toric variety $\Uscr_\sigma^{cal}=[\C^I \times \T^J/\Z^{N-d} \times E(\ker(h_{\sigma\C}))]$ a "non-calibrated quantum toric variety" $\Uscr_\sigma\coloneqq [\C^I \times \T^J/E(h_{\sigma\C}^{-1}(\Gamma))]$. \\
More precisely, note $\Xi$ be the kernel of $h^{cal}$ and let $T$ be an object of $\Afrak$ and $(\widetilde{T}^{cal},m^{cal})$ be an object of $\Uscr_\sigma^{cal}(T)$. \\
Since \[h \times id \colon \Z^{N-d} \times E(\ker(h_{\sigma\C})) \to \frac{\Z^{N-d}\times E(\ker(h_{\sigma\C}))}{\Xi \times 0}=E(\Gamma) \times E(\ker(h_{\sigma\C}))=E(h_{\sigma\C}^{-1}(\Gamma))\] is a $\Xi$-covering then $\widetilde{T}=\widetilde{T}^{cal}/\Xi \to T$ is a $E(h_{\sigma\C}^{-1}(\Gamma))$-principal bundle. \\
Note $m: [\widetilde{t}^{cal}]\in \widetilde{T} \mapsto m^{cal}(\widetilde{t}^{cal}) \in \C^I \times \T^J$. This map is well-defined: 
\[ m^{cal}((\xi,0) \cdot \widetilde{t}^{cal})=E(\varphi(\xi,0))m^{cal}(t^{cal})=m^{cal}(t^{cal}) \]

In other words, a non-calibrated affine quantum toric varieties is the stack obtained by forgetting the ineffectivity of the action of $\Z^{N-d} \times E(\ker(h_{\sigma\C}))$ on $\Sscr$. 

\begin{Def}
The non-calibrated quantum toric variety associated to $\sigma_I$ and the group $\Gamma$ is the stack 
\[
\Uscr_\sigma = [\C^I \times \T^J/E(h_{\sigma\C}^{-1}(\Gamma))]
\]
where $|J|=n-\dim(\sigma)$
\end{Def}

The expression "non-calibrated" comes from the simplicial case where the authors of \cite{VQS} use only the generators of the 1-cones and not a calibration of the group. It is not used for the non-simplicial case due to the lack of compatibility with morphisms :

We can define morphisms of non-calibrated affine quantum toric varieties in the same way as \ref{morphism_cal}:
\begin{Def} \label{morphism_uncal}
A toric morphism between two non-calibrated affine quantum toric varieties $\Uscr_{\sigma_1}=[\C^I \times \T^J/G_1]$ and $\Uscr_{\sigma_2}=[\C^{I'} \times \T^{J'}/G_2]$ is a stack morphism $[\C^I \times \T^J/G_1] \to [\C^{I'} \times \T^{J'}/G_2]$ which restricts to a presented non-calibrated torus morphism $[\T^{I} \times \T^J/G_1] \to [\T^{I'} \times \T^{J'}/G_2]$
\end{Def}

We can see that a morphism of affine quantum toric varieties descends to quotient and induces a morphism of non-calibrated affine quantum toric varieties. Hence, we have defined a functor $\mathrm{f}: \Uscr_\sigma^{cal} \mapsto \Uscr_\sigma$ from the category of affine quantum toric varieties to the category of non-calibrated affine quantum toric varieties.

\begin{Rem} \label{non_cal_morph}
We can follow the proof of \ref{equiv_cat_aff} in order to see that a morphism $(L,H)$ of calibrated quantum fan morphism (we need the two morphisms due to the presence of the kernel of $h_{\sigma\C}$) leads to a morphism $\lscr$ of non-calibrated affine quantum toric varieties. 
\end{Rem}

\begin{Lemma} \label{forgetful_functor_f}
This functor coincide with the functor $\mathrm{f}$ of \citep{VQS} (section 6.2) on simplicial quantum toric variety.
\end{Lemma}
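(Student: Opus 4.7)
The plan is to reduce to Proposition \ref{isom_simp_cal} and then unfold what both functors do on objects and morphisms.

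First I would note that when $\sigma$ is simplicial, $h_{\sigma\C}$ is injective, so $\ker(h_{\sigma\C})=\{0\}$. Hence the construction of this paper collapses to
\[\Uscr_\sigma^{cal}=[\C^I\times\T^J/\Z^{N-d}]\quad\text{and}\quad \Uscr_\sigma=[\C^I\times\T^J/E(h_{\sigma\C}^{-1}(\Gamma))],\]
with the $\Z^{N-d}$-action on the first given by $\varphi$ and the $E(h_{\sigma\C}^{-1}(\Gamma))$-action on the second obtained by factoring out $\Xi\times 0$. By Proposition \ref{isom_simp_cal}, the isomorphism $\Uscr_\sigma^{cal}\simeq Q^{cal}_{k,d,P_\chi^{-1}\circ\varphi}$ identifies this affine piece with the simplicial calibrated quantum toric variety of \cite{VQS}, and a symmetric identification, via the same morphism $(\overline{h}_{\sigma\C}P_\chi,P_\chi)$, holds for the non-calibrated counterpart.

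Next I would check compatibility on objects. An object over $T\in\Afrak$ of $\Uscr_\sigma^{cal}$ is a $\Z^{N-d}$-principal bundle $\widetilde{T}^{cal}\to T$ with equivariant map $m^{cal}\colon \widetilde{T}^{cal}\to\C^I\times\T^J$. The functor $\mathrm{f}$ defined here sends this to $\widetilde{T}=\widetilde{T}^{cal}/\Xi$ with the induced map $m$ to $\C^I\times\T^J$, viewed as an $E(h_{\sigma\C}^{-1}(\Gamma))$-principal bundle via the short exact sequence
\[0\longrightarrow \Xi\longrightarrow \Z^{N-d}\stackrel{E\circ\varphi}{\longrightarrow} E(h_{\sigma\C}^{-1}(\Gamma))\longrightarrow 0.\]
The functor $\mathrm{f}$ in \cite{VQS} (section 6.2) is defined on simplicial quantum toric varieties by exactly the same recipe: one quotients the $\Z^{N-d}$-bundle by the kernel of the calibration to obtain a principal bundle under the image group. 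Under the identification of Proposition \ref{isom_simp_cal}, the two constructions agree on objects by direct inspection.

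Finally I would verify the compatibility on morphisms. A morphism $(\widetilde{T}^{cal}\to\widetilde{S}^{cal})$ over $T\to S$ descends to a morphism $\widetilde{T}\to\widetilde{S}$ of the quotients, both here and in \cite{VQS}; since the equivariant map and the projection are computed pointwise, the two descended morphisms coincide. A morphism of calibrated quantum fan $(L,H)$ is sent in both settings to the same pair of linear maps determining the toric morphism between the non-calibrated pieces (cf.\ the construction around \eqref{morphism_cone_extended}), so the functors also agree on arrows between different $\sigma$. The only potential obstacle is making sure that the isomorphism in Proposition \ref{isom_simp_cal}, which involves the permutation $P_\chi$ and the identification $\psi$, is natural with respect to the forgetful operation; but both functors are defined by taking the same quotient by $\Xi$ upstairs and this operation commutes with the linear isomorphism $(\overline{h}_{\sigma\C}P_\chi,P_\chi)$ used in \ref{isom_simp_cal}, so naturality is automatic.
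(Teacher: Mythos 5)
The paper states this lemma without proof, evidently regarding it as immediate from the definitions, so there is no argument in the paper to compare against. Your reasoning is correct and supplies exactly the justification the author left implicit: for a simplicial cone $h_{\sigma\C}$ is injective, so $\ker(h_{\sigma\C})=0$ and the acting group collapses to $\Z^{N-d}$, at which point both versions of $\mathrm{f}$ are defined by the identical recipe of passing to the $\Xi$-quotient of the principal bundle; the only point worth being careful about is the naturality of the identification of Proposition~\ref{isom_simp_cal} with respect to the quotient by $\Xi$, which you address at the end and which does hold since the linear isomorphism $(\overline{h}_{\sigma\C}P_\chi,P_\chi)$ intertwines the $\Xi$-actions.
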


Moreover, the functor $\mathrm{f}$ induces (with lemma \ref{equiv_6uple}) a functor $\widetilde{\mathrm{f}}$ on the category of presented quantum tori defined by  
\[
\widetilde{\mathrm{f}}(\Tscr^{cal}_{h,\Ical},h': \Z^N \to G,\Ical',L,H,s)= (\Tscr_{d,\Gamma},L)
\]
and 
\[\widetilde{\mathrm{f}}(\Lcal,\Hcal,\Scal,\Lcal',\Hcal',\Scal')=(\Lcal,\Lcal')\]

\begin{Prop}
We have a commutative diagram
\[\begin{tikzcd}[ampersand replacement=\&]
	{\Uscr^{cal}_\sigma} \& {\Uscr^{cal}_{\sigma'}} \\
	{\Uscr_\sigma} \& {\Uscr_\sigma}
	\arrow["{\mathrm{f}}"', from=1-1, to=2-1]
	\arrow["{\lscr^{cal}}", from=1-1, to=1-2]
	\arrow["{\mathrm{f}}", from=1-2, to=2-2]
	\arrow["\lscr"', from=2-1, to=2-2]
\end{tikzcd}\]
\end{Prop}

We can adapt the definition of quantum toric varieties to the non-calibrated case:

\begin{Def} \label{QTV_uncal}
Let $T \in \Afrak$. An object of $\Xscr_{\Delta,\Gamma}$ over $T$ is a covering $(T_I\coloneqq T_{\sigma_I})$ of $T$ indexed by the set of maximal cones $\Ical_{max}$ together with the image by $\mathrm{f}$ of an object

\[\begin{tikzcd}[ampersand replacement=\&]
	{\widetilde{T}_I} \& {\C^I \times \T^K} \\
	{T_I}
	\arrow[from=1-1, to=2-1]
	\arrow["{m_I}", from=1-1, to=1-2]
\end{tikzcd}\]

of $[\C^I \times \T^K/\Z^{N-d} \times E(\ker(\overline{h}_{\sigma_I\C})](T_I)$ for every $\sigma_I \in \Ical_{max}$, satisfying for any couple $(I,I')$ with non-empty intersection $J$
\begin{equation} \label{noncal_map_change}
     \mathrm{f}\left(\gscr_{II'}\begin{pmatrix}\begin{tikzcd}[ampersand replacement=\&]
	{\widetilde{T}_{I \supset J}} \& {\Sscr_{\sigma_I\sigma_{I'}}} \\
	{T_I}
	\arrow[from=1-1, to=2-1]
	\arrow["{m_I}", from=1-1, to=1-2]
\end{tikzcd}\end{pmatrix}\right)=\mathrm{f}\begin{pmatrix}\begin{tikzcd}[ampersand replacement=\&]
	{\widetilde{T}_{I' \supset J}} \& {\Sscr_{\sigma_{I'}\sigma_I}} \\
	{T_{I'}}
	\arrow[from=1-1, to=2-1]
	\arrow["{m_{I'}}", from=1-1, to=1-2]
\end{tikzcd}\end{pmatrix}
\end{equation}
where $\widetilde{T}_{I' \supset J}\coloneqq m_I^{-1}(\Sscr_{\sigma_I \sigma_{I'}})$ and $\widetilde{T}_{I' \supset J}\coloneqq m_{I'}^{-1}(\Sscr_{\sigma_{I'} \sigma_I})$ 

A morphism of $\Xscr_{\Delta,\Gamma}$ over a toric morphism $T \to S$ is defined by applying $\mathrm{f}$ to a morphism of $\Xscr^{cal}_{\Delta,h^{cal},\Ical}$ over this morphism.
\end{Def}

The definition of non-calibrated quantum toric varieties uses the forgetful functor $\mathrm{f}$ because, as remarked previously (see remark \ref{non_cal_morph}), we need more data than a non-calibrated quantum fan in order to defining transitions maps. Hence, these varieties can not be defined without their calibrated counterpart.
It is why the (non-simplicial) non-calibrated quantum toric varieties are not studied in this paper and thus we do not investigate adaptations of theorems of \cite{VQS} regarding non-calibrated quantum toric varieties (cf. theorem 5.18 or 7.10).

In the same way as proposition 6.20 of \citep{VQS}, we have :
\begin{Thm}
The quantum toric variety $\Xscr^{cal}_{\Delta,h,\Ical}$ is a gerbe over $\Xscr_{\Delta,\Gamma}$ with band $\Z^{\mathrm{rk}(\Xi)}$. In particular, if $\Xi=0$ then these two stacks are isomorphic. 
\end{Thm}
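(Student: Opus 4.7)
The strategy is to analyze the comparison morphism $\mathrm{f}\colon \Xscr^{cal}_{\Delta,h,\Ical}\to\Xscr_{\Delta,\Gamma}$ affine-locally via a short exact sequence of abelian groups, and then glue the resulting affine gerbes using the descent data of Definition \ref{QTV_uncal}. First, on each maximal cone $\sigma\in\Delta$, set $G_\sigma=\Z^{N-d}\times E(\ker(h_{\sigma\C}))$. By construction the action of $G_\sigma$ on $\C^I\times\T^J$ is by translation through $\varphi(0\oplus m)+t$, so as noted just before Definition \ref{morphism_uncal} its subgroup of ineffectivity is exactly $\Xi\times 0=\ker(h^{cal})\times 0\subset G_\sigma$. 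This gives a central extension
\[
1\longrightarrow \Xi\longrightarrow G_\sigma\longrightarrow E(h_{\sigma\C}^{-1}(\Gamma))\longrightarrow 1
\]
with $\Xi$ acting trivially on $\C^I\times\T^J$, and by definition $\mathrm{f}$ identifies $\Uscr_\sigma^{cal}=[\C^I\times\T^J/G_\sigma]$ with the quotient $\Uscr_\sigma=[\C^I\times\T^J/(G_\sigma/\Xi)]$.

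Second, I would invoke the standard fact that for a central subgroup $K\subset G$ acting trivially on a space $X$, the canonical morphism $[X/G]\to[X/(G/K)]$ is a $K$-gerbe whose band is the constant sheaf $\underline{K}$. Applied to each $\sigma$ this shows $\mathrm{f}\colon\Uscr_\sigma^{cal}\to\Uscr_\sigma$ is a $\Xi$-gerbe: an object $(\widetilde{T},m)$ of $\Uscr_\sigma(T)$ lifts locally to a $G_\sigma$-bundle (any $(G_\sigma/\Xi)$-bundle does, possibly after refinement), any two such lifts differ by a $\Xi$-torsor, and the automorphisms of a given lift over a fixed base form the constant sheaf $\underline{\Xi}$ because $\Xi$ acts trivially on $\C^I\times\T^J$.

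Third, I would check that these affine gerbe structures glue into a global one. The key observation is that by Definition \ref{QTV_uncal} the non-calibrated stack $\Xscr_{\Delta,\Gamma}$ is built from the same atlas as $\Xscr^{cal}_{\Delta,h,\Ical}$, with gluing datum obtained by applying $\mathrm{f}$ to the $\gscr_{II'}$; hence the cocycle condition is preserved, the band sheaf is the constant $\underline{\Xi}$ on every overlap, and the local $\Xi$-gerbe structures descend to a global one. Since $\Xi$ is a subgroup of $\Z^N$ it is a free abelian group of rank $\mathrm{rk}(\Xi)$, so the band is (non-canonically) identified with the constant sheaf $\Z^{\mathrm{rk}(\Xi)}$. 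In the case $\Xi=0$, the above extension is trivial, so $G_\sigma/\Xi=G_\sigma$ on every chart, each $\mathrm{f}\colon\Uscr_\sigma^{cal}\to\Uscr_\sigma$ is an isomorphism, and the induced global morphism $\Xscr^{cal}_{\Delta,h,\Ical}\to\Xscr_{\Delta,\Gamma}$ is an equivalence by descent.

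The main obstacle I anticipate is the first step: one must verify carefully that $\Xi\times 0$ genuinely coincides with the pointwise stabilizer of the $G_\sigma$-action (not merely with the global kernel of ineffectivity), so that the local automorphism sheaf of an object of $\Uscr_\sigma^{cal}$ over one in $\Uscr_\sigma$ really is the constant sheaf $\underline{\Xi}$; this is what pins down the band rather than just the existence of a $\Xi$-action on the fibers. Once this is established and the compatibility with $\gscr_{II'}$ is recorded, assembling the affine gerbes into a global $\Xi$-gerbe is formal.
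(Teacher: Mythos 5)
The paper in fact gives no formal proof of this theorem: the content is contained entirely in the preamble to the statement, where the quotient map
\[
\Z^{N-d}\times E(\ker(h_{\sigma\C}))\longrightarrow \bigl(\Z^{N-d}\times E(\ker(h_{\sigma\C}))\bigr)/(\Xi\times 0)=E(h_{\sigma\C}^{-1}(\Gamma))
\]
is exhibited as a $\Xi$-covering, the functor $\mathrm{f}$ is defined by passing to the $\Xi$-quotient of the local principal bundles, and $\Xscr_{\Delta,\Gamma}$ is defined (Definition \ref{QTV_uncal}) by transporting the descent data of $\Xscr^{cal}_{\Delta,h^{cal},\Ical}$ through $\mathrm{f}$. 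Your proposal reconstructs exactly this: you identify the affine central extension $1\to\Xi\to G_\sigma\to E(h_{\sigma\C}^{-1}(\Gamma))\to 1$ with $\Xi$ acting trivially, apply the standard fact that $[X/G]\to[X/(G/K)]$ is a $K$-gerbe, and glue via the shared descent data — so it is essentially the intended argument, spelled out more fully than the paper does.

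One small remark on your last paragraph: the concern about the \emph{pointwise} stabilizer is a red herring. The band of the gerbe $[X/G]\to[X/(G/K)]$ is always the constant sheaf $\underline{K}$ provided $K$ acts trivially on all of $X$; the stabilizers of individual points (which jump upward on strata such as $\C^{\widetilde I}\oplus 0$) do not enter into the computation of the automorphism sheaf of a lift. What one needs is precisely the global triviality of the $\Xi$-action on $\C^I\times\T^J$, which you correctly extract from the Remark in Subsection \ref{calibrated_case}; the rest follows formally. With that clarification your argument is complete and matches the paper's.
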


This structure of gerbe induces a $\T^{\mathrm{rk}(\Xi)}$-bundle over $\Xscr_{\Delta,\Gamma}$ up to homotopy. The end of this section will be devoted to describing it:

Let $\sigma=\sigma_I \in \Delta$ be a maximal cone, $h_{\sigma\C}$ be the $\C$-linear morphism associated to it, $\psi: \C^I/\ker(h_{\sigma\C}) \to \C^d$ induced by $h_{\sigma\C}$. \\
Let $\widetilde{I}$ be a subset of $I$ such that $(h(e_i), i\in \widetilde{I})$ is a basis of $\Vect(\sigma)$, $J$ be a set of cardinal $d-\dim(\sigma)$ such that \[\Vect(h^{cal}(e_i),i \in I \cup J)=\C^d,\] let $\chi \in \Sfrak_N$ be a permutation such that
\begin{equation} \label{condition_permutation}
 \chi(\{1,\ldots,\dim(\sigma)\})=\widetilde{I} \text{ and } \chi(\{\dim(\sigma)+1,\ldots,d\})=J 
\end{equation}
and $P_\chi \in \GL_N(\R)$ be the map associated to $\chi$.

Let $\varphi=(\varphi_1,\varphi_2)$ be the linear map $\psi^{-1} h P_\chi: \C^N \to \C^I/\ker(h_\sigma) \times \C^J$. Thanks to the conditions \eqref{condition_permutation}, we get the following diagram
\[\begin{tikzcd}[ampersand replacement=\&]
	{\C^N} \&\& {\C^I \times \C^J/\ker(h_\sigma)} \\
	{\T^I/E(\ker(h_{\sigma\C})) \times \C^J \times \C^{N-d} } \&\& {\T^I \times \C^J/\ker(h_\sigma)} \\
	{[\C^I/E(\ker(h_{\sigma\C})) \times \C^J \times \C^{N-d} ]} \&\& {[\C^I \times \C^J/E(\ker(h_\sigma))]} \\
	{[\C^I/E(\ker(h_{\sigma\C})) \times \T^J \times \C^{N-d} ]} \&\& {[\C^I \times \T^J/E(\ker(h_\sigma))]} \\
	{[\C^I/E(\ker(h_{\sigma\C})) \times \T^J \times \T^{N-d} ]} \&\& {\Uscr_{\sigma}^{cal}}
	\arrow["\varphi", from=1-1, to=1-3]
	\arrow["{(\psi^{-1},id_{\C^{N-d}}) \circ E_k}"', from=1-1, to=2-1]
	\arrow[hook', from=2-1, to=3-1]
	\arrow["{(id,E,id)}"', from=3-1, to=4-1]
	\arrow["{\overline{E_d}}"', from=4-1, to=5-1]
	\arrow["{E_I}", from=1-3, to=2-3]
	\arrow[hook', from=2-3, to=3-3]
	\arrow["{\overline{E}_J}", from=3-3, to=4-3]
	\arrow[from=4-3, to=5-3]
	\arrow["{\varphi^{cal}}", from=5-1, to=5-3]
	\arrow["{\overline{\varphi}}", from=2-1, to=2-3]
	\arrow["{\overline{\varphi}}", from=3-1, to=3-3]
	\arrow["{\widehat{\varphi}}", from=4-1, to=4-3]
\end{tikzcd}\]
where for all $(z_1,z_2) \in \T^I \times \C^J$, $w \in \C^{N-d}$ 
\[\overline{\varphi}([z_1],z_2,w)=([E(\varphi_1(0 \oplus w))z_1], \varphi_2(0 \oplus w)+z_2) \]
and 
$z \in \C^I \times \T^J$, $w \in \C^{N-d}$,
\[\widehat{\varphi}([z_1],z_2,w)=([E(\varphi(0 \oplus w))z] \]
(we can translate this equality in terms of principal bundles)

The group $\Z^{N-d}$ acts on $\left[\C^I/E(\ker(h_{\sigma\C})) \right] \times \T^J \times \C^{N-d}$ by translation on the last factor and on $\left[\C^I/E(\ker(h_{\sigma\C})) \right] \times \T^J \times \C^{N-d}$ through the morphism $\varphi$. The morphism $\widehat{\varphi}$ is equivariant for these two actions. Hence, $\widehat{\varphi}$ descends to a stack morphism $\left[\C^I/E(\ker(h_{\sigma\C}) \right] \times \T^J \times \T^{N-d} \to \Uscr^{cal}_\sigma  $

In the same manner, the projection on the first factor \[\left([\C^I/E(\ker(h_{\sigma\C}))] \times \T^J\right) \times \C^{N-d} \to [\C^I/E(\ker(h_{\sigma\C}))] \times \T^J \] is equivariant for the action (on the source) of $\Z^{N-d}$ defined by, for $p \in \Z^{N-d}$ ,  $(z,w) \in \C^I \times \T^J$
\begin{equation} \label{action_ZN-d}
p \cdot (z,w)=(E(\varphi(0 \oplus p))z, w+p )  
\end{equation}
(it is well-defined since the actions are multiplicative) and the action of $\Z^{N-d}$ on the target defined by, for $p \in \Z^{N-d}$ and for $(z,w) \in \C^I \times \T^J $
\begin{equation}
p \cdot z=E(\varphi(0 \oplus p))z  
\end{equation}
Hence, the projection descends to a stack morphism:

\begin{equation} \label{diag_triv_1}
\begin{tikzcd}[ampersand replacement=\&]
	{([\C^I/E(\ker(h_{\sigma\C}))] \times \T^J)\times \C^{N-d}} \&\& {([\C^I/E(\ker(h_{\sigma\C}))] \times \T^J)} \\
	{[(\C^I\times \T^J)\times \C^{N-d}/E(\ker(h_{\sigma\C}))\times \Z^{N-d}] } \& {} \& {\Uscr^{cal}_\sigma}
	\arrow[from=1-1, to=2-1]
	\arrow["{\pi_1}", from=1-1, to=1-3]
	\arrow[from=1-3, to=2-3]
	\arrow["\pscr", from=2-1, to=2-3]
\end{tikzcd}
\end{equation}
 
Moreover, we have the following commutative diagram
\[\begin{tikzcd}[ampersand replacement=\&]
	{([\C^I/E(\ker(h_{\sigma\C}))] \times\T^J)\times\C^{N-d}} \&\& {[\C^I\times\T^J/E(\ker(h_{\sigma\C}))] } \\
	{[\C^I/E(\ker(h_{\sigma\C}))] \times\T^J\times\C^{N-d}} 
	\arrow["{\pi_1}", from=1-1, to=1-3]
	\arrow["{(\widehat{\varphi},id)^{-1}}"', from=1-1, to=2-1]
	\arrow["{\widehat{\varphi}}"', from=2-1, to=1-3]
\end{tikzcd}\]
Since the morphism $(\widehat{\varphi},id)^{-1}$ is equivariant for the action \eqref{action_ZN-d} and for the translation in the last factor. Hence, we get the following commutative diagram 

\begin{equation} \label{diag_triv_2}
\begin{tikzcd}[ampersand replacement=\&]
	{([\C^I\times\T^J)\times\C^{N-d}/E(\ker(h_{\sigma\C}))\times \Z^{N-d}] } \&\& {\Uscr^{cal}_\sigma} \\
	{[\C^I/E(\ker(h_{\sigma\C}))] \times\T^J\times\T^{N-d}} \\
	{}
	\arrow["\pscr", from=1-1, to=1-3]
	\arrow["{(\widehat{\varphi},id)^{-1}}"', from=1-1, to=2-1]
	\arrow["{\varphi^{cal}}"', from=2-1, to=1-3]
\end{tikzcd}
\end{equation}

The diagrams \eqref{diag_triv_1} and \eqref{diag_triv_2} can be seen as trivialization of the morphism $\varphi^{cal}$. Thus, we can see $\varphi^{cal}$ as a $\C^{N-d}$-fibre bundle.

We can remark that the transition map of $\Xscr_{\Delta,\Gamma}$ comes from descent of transitions map between the different $[\C^I/E(\ker(h_{\sigma\C}))]\times \T^J$ thus we can define $\Xscr_\Delta$ the stack over $\Afrak$ given by the descent data of the family of stacks $[\C^I/E(\ker(h_{\sigma\C}))] \times \T^J \times \T^{N-d}$ indexed by every maximal cones of $\Delta$ with these transition maps (in the same manner as \ref{QTV_cal}).

By functoriality, we get 

\begin{Thm}
The map $\varphi^{cal} \colon \Xscr_{\Delta} \to \Xscr^{cal}_{\Delta,h,\Ical}$ is a $\C^{N-d}$-principal bundle (in the sense that this morphism can be locally trivialized on the affine pieces, the trivialization map induced the identity on the factors $\C^{N-d}$ and the transition maps are given by the action of $\C^{N-d}$)
\end{Thm}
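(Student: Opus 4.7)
The plan is to upgrade the two commutative diagrams \eqref{diag_triv_1} and \eqref{diag_triv_2} (which were constructed for a single maximal cone $\sigma$) into a genuine local trivialization of $\varphi^{cal}$, and then to verify that the transition data of $\Xscr_\Delta$ relative to $\Xscr^{cal}_{\Delta,h,\Ical}$ is implemented fibrewise by translations in the $\C^{N-d}$ factor.

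First I would fix a maximal cone $\sigma = \sigma_I \in \Delta$ and read off from \eqref{diag_triv_1} and \eqref{diag_triv_2} a canonical isomorphism of stacks over $\Uscr_\sigma^{cal}$,
\[
(\varphi^{cal})^{-1}(\Uscr_\sigma^{cal}) \;\simeq\; \Uscr_\sigma^{cal} \times \C^{N-d},
\]
where the projection to the first factor becomes $\varphi^{cal}$ and the projection to the second factor is the identity on $\C^{N-d}$. Concretely, \eqref{diag_triv_1} exhibits $\bigl[(\C^I \times \T^J) \times \C^{N-d}/E(\ker(h_\sigma)) \times \Z^{N-d}\bigr]$ as the stack $\pscr$ lying over $\Uscr_\sigma^{cal}$, while \eqref{diag_triv_2} identifies this very stack with the restriction of $\Xscr_\Delta$ to $\Uscr_\sigma^{cal}$ via $(\widehat{\varphi},id)^{-1}$. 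Composing gives the required local trivialization, and by construction the identification preserves the last factor.

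Next I would check that on overlaps $\Uscr_\sigma^{cal} \cap \Uscr_\tau^{cal}$, the two trivializations differ by translation in $\C^{N-d}$. For this I would trace through the glueing morphisms $\gscr_{\sigma\tau}$ of subsection \ref{Glueing}, both at the level of $\Xscr^{cal}_{\Delta,h,\Ical}$ and at the level of $\Xscr_\Delta$ (whose glueing is defined functorially from the same descent datum). Since the calibrations $\varphi_\sigma, \varphi_\tau$ associated to the two cones differ only by the choice of the basis and permutation used to split $\C^I$, and since the difference $\varphi_\tau - \varphi_\sigma$ on the kernel direction lands in $\ker(h^{cal}_\C)$ which acts by translation on $\C^{N-d}$, the change of trivialization is precisely a translation in $\C^{N-d}$ varying continuously over the overlap.

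The main obstacle will be the bookkeeping in the transition step: verifying that the translation cocycle on $\C^{N-d}$ genuinely lifts the descent data defining $\Xscr_\Delta$, and that its values lie in $\C^{N-d}$ itself (not merely in the quotient encoded by the various $E(\ker(h_{\sigma\C}))$ actions). This amounts to showing that the two trivializations agree up to the natural $\C^{N-d}$-action on the fibre, and to do this I would argue, as in the proof of \eqref{diag_triv_2}, that the isomorphism $(\widehat{\varphi},id)^{-1}$ is compatible with the action \eqref{action_ZN-d} on both sides simultaneously, so that the difference of the two trivializations over the overlap factors through the $\C^{N-d}$-action described at the end of the statement. Once this is established, local triviality, equivariance for the $\C^{N-d}$-action, and the cocycle condition on transition maps together give the claimed principal bundle structure.
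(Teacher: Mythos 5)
Your first step---reading off the local trivialization of $\varphi^{cal}$ over each $\Uscr_\sigma^{cal}$ from diagrams \eqref{diag_triv_1} and \eqref{diag_triv_2}---matches the paper, which indeed treats that part as already done and says ``it remains us to explain the action induced by the transition maps.'' The real content of the theorem, and the place your argument has a genuine gap, is exactly the step you flag as ``the main obstacle'': you state that ``the difference $\varphi_\tau - \varphi_\sigma$ on the kernel direction lands in $\ker(h^{cal}_\C)$ which acts by translation on $\C^{N-d}$,'' but you never verify this, and it is not quite the right mechanism. The two calibrations $\varphi_I = \psi_I^{-1} h P_{\chi_I}$ and $\varphi_{I'} = \psi_{I'}^{-1} h P_{\chi_{I'}}$ are not compared by subtraction; the relevant relation is the intertwining identity $\psi_{I'}^{-1}\psi_I\,\varphi_I = \varphi_{I'}\,(P_{\chi_{I'}}^{-1}P_{\chi_I})$, which encodes how the change of chart on the base (driven by $\psi_{I'}^{-1}\psi_I$) interacts with the permutation of coordinates. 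It is from this identity, not from any statement about $\ker(h^{cal}_\C)$, that the transition maps are extracted.

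Concretely, the paper proceeds differently from you at the transition step. It first reduces to the simplicial case, observing that by the very construction of $\Xscr_\Delta$ one may take $\Xscr_\Delta = \Sscr$; this is a simplification you do not invoke, and it is what lets the paper write everything in terms of the classical toric variety $\Sscr$ and the ``quasifold'' coordinate formalism of \cite{VQS}. Next it writes the $\C^{N-d}$-action on each chart $\C^I \times \T^J \times \C^{(I\cup J)^c}$ explicitly, as in \eqref{action_transp}, defines the obvious global sections $s_\sigma(x,y)=(x,y,0)$, and then exhibits explicit transition functions $t_{\sigma\tau}\colon z \mapsto [z^{K_{II'}}]$ with $K_{II'} = P_{\chi_{I'}}^{-1}(I)\setminus\{1,\ldots,d\}$, checking $s_\sigma = t_{\sigma\tau}\cdot s_\tau$ via \eqref{action_transp} and \eqref{egalite_Z^d}. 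Your proposal replaces all of this with an abstract assertion that the two trivializations ``agree up to the natural $\C^{N-d}$-action.'' That is precisely the statement to be proved, and without either the reduction to the simplicial case or the explicit identity \eqref{egalite_Z^d} there is no argument that the cocycle takes values in $\C^{N-d}$ acting in the prescribed way, rather than in something larger (e.g.\ something that also permutes coordinates). To repair the proof you should carry out the paper's reduction and then run the coordinate computation, or find an intrinsic replacement for \eqref{egalite_Z^d} that does not merely restate the claim.
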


This statement gives further details on the theorem 6.21 of \citep{VQS}.
\begin{proof}
It remains us to explain to the action induced by the transition maps. By the definitions of $\Xscr_\Delta$ and $\Xscr^{cal}_{\Delta,h,\Ical}$ (thanks to the decomposition $\C^I=\C^{\widetilde{I}} \oplus \ker(h_{\sigma\C})$ and the definition of the actions), we can restrict us to the simplicial fans i.e. $\Xscr_\Delta=\Sscr$. 

Each cone $\sigma_I$ of $\Delta$ induces an action of the group $\C^{N-d}$ on each variety $\C^{I} \times \T^{J} \times \C^{(I \cup J)^c} \subset \Sscr$. Namely, it is the induced $\C^{N-d}$-action by the action by translation on the last factor of $\C^{I} \times \T^{J} \times \C^{N-d}$ and the isomorphism  $(\widehat{\varphi}_I,id)  \circ (id,P_{\chi_I|\C^{N-d}})^{-1}$ i.e. 
\begin{equation} \label{action_transp}
 \lambda \cdot (z_1,z_2,w)=(E(-\widetilde{\varphi}_I^1(\lambda))z_1,E(-\widetilde{\varphi}_I^2(\lambda))z_2,w+P_{\chi_I}\lambda)   
\end{equation}
where $\widetilde{\varphi}_I=(\widetilde{\varphi}^1_I,\widetilde{\varphi}_I^2): \C^{N-d} \to \C^I \times \C^{J}$ is the restriction of $\varphi_I$ on $\C^{N-d}$.

Thus, the morphism $\widehat{\varphi_I}\circ (id,P_{\chi_I|\C^{N-d}})^{-1} \colon \C^{I} \times \T^{J} \times \C^{(I \cup J)^c} \to \C^{I} \times \T^{J}$ is a $\C^{N-d}$ trivializable principal bundle. Hence, it has a global cross section $s_\sigma:  \C^{I} \times \T^{J}\to\C^{I} \times \T^{J} \times \C^{(I \cup J)^c} $ which is defined by 
\[
s_\sigma(x,y)=(x,y,0)
\]
This morphism descends as a morphism $\Uscr_\sigma^{cal} \to \C^I \times \T^{J} \times \T^{N-d}$. 

In order to conclude, we have to find, for each cones $\sigma=\sigma_I$, $\tau=\sigma_{I'}$ with a non-empty intersection, a morphism $t_{\sigma\tau} \colon \Uscr_{\sigma\tau}^{cal} \to \C^{N-d}$ such that 
\begin{equation} \label{transition}
   s_\sigma=t_{\sigma\tau} \cdot s_\tau
\end{equation}
%on $\Uscr_{\sigma\tau}^{cal}$.

Note $\varphi_I=\psi_I^{-1} h P_{\chi_I}$ and $\varphi_{I'}=\psi_{I'}^{-1} h P_{\chi_{I'}}$ the calibration associated, respectively, to $\sigma$ and $\tau$.
Recall the commutative diagram used in the definition of quantum toric varieties: 
\[\begin{tikzcd}[ampersand replacement=\&]
	{\C^{I \cup J}} \& {\C^d} \& {\C^{I' \cup J'}} \\
	{\psi_I^{-1}(\Gamma)} \& \Gamma \& {\psi_{I'}^{-1}(\Gamma)} \\
	{\Z^N} \& {\Z^N} \& {\Z^N}
	\arrow["{P_{\chi_I}}", from=3-1, to=3-2]
	\arrow["{P^{-1}_{\chi_{I'}}}", from=3-2, to=3-3]
	\arrow["{\varphi_I}", from=3-1, to=2-1]
	\arrow["h"', from=3-2, to=2-2]
	\arrow["{\varphi_{I'}}"', from=3-3, to=2-3]
	\arrow["{\psi_I}", from=2-1, to=2-2]
	\arrow["{\psi_{I'}^{-1}}", from=2-2, to=2-3]
	\arrow[hook, from=2-3, to=1-3]
	\arrow[hook, from=2-2, to=1-2]
	\arrow[hook, from=2-1, to=1-1]
	\arrow["{\psi_I}", from=1-1, to=1-2]
	\arrow["{\psi_{I'}^{-1}}", from=1-2, to=1-3]
\end{tikzcd}\]
Since $\psi_{I'}^{-1} \psi_{I} \varphi_I=\varphi_{I'}(P_{\chi_I'}^{-1}P_{\chi_I}) $ then for all point $m$ of $\Z^d$, 
\begin{equation} \label{egalite_Z^d}
   \psi_{I'}^{-1} \psi_{I}(P_\chi m)=\varphi_{I'}(P_{\chi_{I'}}^{-1}P_{\chi_I})(m)=(Id+\widetilde{\varphi}_{I'}) \circ (P_{\chi_{I'}}^{-1}P_{\chi_I})(m) 
\end{equation}
%Since we consider the simplicial case, we can use the quasifold formalism (like the equation (6.33) of \citep{VQS}): \\
The transition map between $\Uscr_{\tau\sigma}^{cal}$ and $\Uscr^{cal}_{\sigma\tau}$ is $[z^{P_\chi}] \in \Uscr_{\tau\sigma}^{cal} \mapsto \left[z^{\psi_{I'}^{-1} \psi_{I}P_\chi}\right] \in \Uscr_{\sigma\tau}^{cal}$ (i.e. the descent to quotient of the linear morphism $\psi_{I'}^{-1} \psi_{I}$)

Note $K_{II'}$ the set $P_{\chi_{I'}}^{-1}(I) \setminus \{1,\ldots,d\}$. Then, the maps $t_{\sigma\tau} \colon z \mapsto \left[z^{K_{II'}}\right]$ verify the equality \eqref{transition} thanks to the equalities \eqref{action_transp} and \eqref{egalite_Z^d}.
\end{proof}

As $\Xscr^{cal}_{\Delta,h,\Ical}$ is a gerbe over $\Xscr_{\Delta,\Gamma}$ with band $\Z^{\mathrm{rk}(\Xi)}$ then 

\begin{Thm}
The map $\mathrm{f} \circ \varphi^{cal} \colon \Xscr_{\Delta} \to \Xscr_{\Delta,\Gamma}$ is a $\C^{N-d-\mathrm{rk}(\Xi)}\times \T^{\mathrm{rk}(\Xi)}$-principal bundle and hence, a $\T^{\mathrm{rk}(\Xi)}$-principal bundle up to homotopy.
\end{Thm}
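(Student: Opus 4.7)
The plan is to deduce the result from the previous theorem, which identifies $\varphi^{cal}$ as a $\C^{N-d}$-principal bundle, together with the gerbe structure of $\mathrm{f}$ with band $\Z^{\mathrm{rk}(\Xi)}$. The underlying idea is that the ineffectivity $\Xi = \ker(h^{cal}) \subset \Z^N$ embeds via the Gale transform as a discrete subgroup of the structure group $\C^{N-d}$ of $\varphi^{cal}$, acting by translation on fibres; the gerbe quotient performed by $\mathrm{f}$ then kills this $\Xi$ and reduces the structure group from $\C^{N-d}$ to $\C^{N-d}/\Xi$.

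First I would make the embedding $\Xi \hookrightarrow \C^{N-d}$ explicit. Since $\Xi \subset \ker(h^{cal}_\C)$ and the Gale transform $k \colon \C^{N-d} \to \C^N$ is a $\C$-linear isomorphism onto $\ker(h^{cal}_\C)$, the restriction $k^{-1}|_\Xi$ identifies $\Xi$ with a rank-$\mathrm{rk}(\Xi)$ discrete subgroup of $\C^{N-d}$. Choosing $k$ with real coefficients, this subgroup lies inside $\R^{N-d} \subset \C^{N-d}$ and, up to a real change of basis, is the standard lattice $\Z^{\mathrm{rk}(\Xi)}$ sitting in $\R^{\mathrm{rk}(\Xi)} \oplus \R^{N-d-\mathrm{rk}(\Xi)} = \R^{N-d}$. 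Extending this splitting $\C$-linearly and using the exponential isomorphism $\C/\Z \simeq \C^*$, one gets the group isomorphism
\[
\C^{N-d}/\Xi \simeq (\C^*)^{\mathrm{rk}(\Xi)} \times \C^{N-d-\mathrm{rk}(\Xi)} = \T^{\mathrm{rk}(\Xi)} \times \C^{N-d-\mathrm{rk}(\Xi)},
\]
with the paper's convention $\T = \C^*$.

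Second, I would perform the descent along $\mathrm{f}$ chartwise. Locally $\Uscr_\sigma^{cal}$ is a $B\Xi$-gerbe over $\Uscr_\sigma$, so a $\C^{N-d}$-principal bundle over $\Uscr_\sigma^{cal}$ corresponds to a $\Xi$-equivariant $\C^{N-d}$-bundle over $\Uscr_\sigma$; by construction of $\varphi^{cal}$ the induced $\Xi$-action on fibres is precisely the translation action of $k^{-1}(\Xi) \subset \C^{N-d}$. This action being free, the fibrewise quotient yields a principal $\C^{N-d}/\Xi$-bundle over $\Uscr_\sigma$. The cocycle condition on overlaps follows from that of $\varphi^{cal}$ already established in the previous theorem: the transition elements $t_{\sigma\tau} \in \C^{N-d}$ of equation \eqref{transition} map unambiguously to $\C^{N-d}/\Xi$ and their cocycle identity is preserved, so the local principal bundles glue to a global principal $\C^{N-d-\mathrm{rk}(\Xi)} \times \T^{\mathrm{rk}(\Xi)}$-bundle over $\Xscr_{\Delta,\Gamma}$.

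For the last assertion, contractibility of $\C^{N-d-\mathrm{rk}(\Xi)}$ makes the projection $\T^{\mathrm{rk}(\Xi)} \times \C^{N-d-\mathrm{rk}(\Xi)} \to \T^{\mathrm{rk}(\Xi)}$ a homotopy equivalence of topological groups, so the induced principal $\T^{\mathrm{rk}(\Xi)}$-bundle is homotopy equivalent to $\mathrm{f} \circ \varphi^{cal}$. The main obstacle I anticipate is the identification, on each chart, between the $\Xi$-action coming from the gerbe band of $\mathrm{f}$ and the translation action of $k^{-1}(\Xi)$ on the $\C^{N-d}$-fibres of $\varphi^{cal}$: this amounts to tracing carefully through the quotient presentations of $\Uscr_\sigma^{cal}$ and $\Uscr_\sigma$ in relation to the explicit local model used to build $\varphi^{cal}$, and once it is done the remaining verifications are formal.
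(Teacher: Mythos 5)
The paper gives no explicit proof of this theorem; it is stated as an immediate corollary of the previous theorem (that $\varphi^{cal}$ is a $\C^{N-d}$-principal bundle) and of the gerbe statement (that $\Xscr^{cal}_{\Delta,h,\Ical}$ is a gerbe over $\Xscr_{\Delta,\Gamma}$ with band $\Z^{\mathrm{rk}(\Xi)}$). Your proposal supplies a proof, and the overall structure is the right one: embed $\Xi$ as a rank-$\mathrm{rk}(\Xi)$ discrete subgroup of the structure group $\C^{N-d}$, observe that the gerbe band acts on fibres by translation through that subgroup, conclude that the structure group is reduced to $\C^{N-d}/\Xi \simeq \T^{\mathrm{rk}(\Xi)} \times \C^{N-d-\mathrm{rk}(\Xi)}$, and use contractibility of the $\C$-factor for the homotopy statement.

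The one point that needs correction is the specific embedding $\Xi \hookrightarrow \C^{N-d}$. You propose to use the Gale transform $k^{-1}$, restricted to $\Xi \subset \ker(h^{cal}_\C)$. However, the Gale transform is introduced in the paper only to explain why the quantum GIT description fails in the non-simplicial setting (Proposition \ref{noGale}); it plays no role in the construction of $\varphi^{cal}$ and of the bundle structure. In the paper's chart-level description, the $\C^{N-d}$ appearing as the fibre is the last factor of $(\C^I \times \T^J) \times \C^{N-d}$, and the group $\Z^{N-d}$ acts on it by honest translation $w \mapsto w+p$ (equation \eqref{action_ZN-d}); correspondingly, the definition of $\mathrm{f}$ in Section \ref{forget_cal} treats $\Xi$ as a subgroup of the $\Z^{N-d}$-factor of $\Z^{N-d} \times E(\ker(h_{\sigma\C}))$, namely the image of $\Xi$ under the projection $\Z^N = \Z^d \oplus \Z^{N-d} \to \Z^{N-d}$ (which is injective on $\Xi$ since the calibration is standard). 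So the $\Xi$-action on the $\C^{N-d}$-fibre coming from the gerbe band is translation by that projection image, not by $k^{-1}(\Xi)$. The two embeddings differ by a real linear automorphism of $\C^{N-d}$ (composition of $k$ and the coordinate projection), so the quotient groups are isomorphic and the theorem statement is unchanged, but if you want your chart-level identification of the $\Xi$-equivariance to match the explicit trivializations \eqref{diag_triv_1}--\eqref{diag_triv_2}, you should replace the Gale transform by the projection to $\Z^{N-d}$. With that substitution, the descent argument and the cocycle-preservation argument via the transition functions $t_{\sigma\tau}$ are correct, and so is the final homotopy reduction.
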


\subsubsection*{Acknowledgements}
I want to thank my Ph.D. advisor Laurent Meersseman for the numerous discussions and for the corrections of the first versions of this paper, Ernesto Lupercio for his comments on it and Elisa Prato for pointing out the work \citep{battaglia2008geometric} of Fiammetta Battaglia.

\bibliographystyle{abbrv}
\bibliography{biblio} 

\end{document}